\title{Solving high-dimensional Hamilton-Jacobi-Bellman PDEs using \\neural networks: perspectives from the theory of controlled diffusions and measures on path space}
\author[1]{Nikolas N\"usken}
\author[2,3]{Lorenz Richter}
\date{June 21, 2021}
\affil[1]{Institute of Mathematics, Universit\"at Potsdam, 14476 Potsdam, Germany, \href{mailto:nuesken@uni-potsdam.de}{nuesken@uni-potsdam.de}}
\affil[2]{Institute of Mathematics, Freie Universit\"at Berlin, 14195 Berlin, Germany, \href{mailto:lorenz.richter@fu-berlin.de}{lorenz.richter@fu-berlin.de}}
\affil[3]{Institute of Mathematics, Brandenburgische Technische Universität Cottbus-Senftenberg, 03046 Cottbus, Germany}
\let\P\relax\newcommand{\P}{\mathbb{P}}
\newcommand*{\Q}{\mathbb{Q}}
\newcommand*{\U}{\mathbb{U}}
\DeclareMathOperator*{\E}{\mathbb{E}}
\DeclareMathOperator*{\R}{\mathbb{R}}
\DeclareMathOperator*{\KL}{\operatorname{KL}}
\DeclareMathOperator*{\Var}{\operatorname{Var}}
\DeclareMathOperator*{\RE}{\mathrm{RE}}
\DeclareMathOperator*{\CE}{\mathrm{CE}}
\DeclareMathOperator*{\Tr}{\mathrm{Tr}}
\theoremstyle{plain}
\newtheorem{theorem}{Theorem}[section]
\newtheorem{lemma}[theorem]{Lemma}
\newtheorem{proposition}[theorem]{Proposition}
\newtheorem{assumption}{Assumption}
\newtheorem{problem}{Problem}[section]
\theoremstyle{definition}
\newtheorem{definition}[theorem]{Definition}
\newtheorem{example}[theorem]{Example}
\theoremstyle{remark}
\newtheorem{remark}[theorem]{Remark}
\begin{document}
    
\maketitle
    
\begin{abstract}
Optimal control of diffusion processes is intimately connected to the problem of solving certain Hamilton-Jacobi-Bellman equations. Building on recent machine learning inspired approaches towards high-dimensional PDEs, we investigate the potential of \emph{iterative diffusion optimisation} techniques, in particular considering applications in importance sampling and rare event simulation, and focusing on problems without diffusion control, with linearly controlled drift and running costs that depend quadratically on the control. More generally, our methods apply to nonlinear parabolic PDEs with a certain shift invariance. The choice of an appropriate loss function being a central element in the algorithmic design, we develop a principled framework based on divergences between path measures, encompassing various existing methods. Motivated by connections to forward-backward SDEs, we propose and study the novel \emph{log-variance} divergence, showing favourable properties of corresponding Monte Carlo estimators. The promise of the developed approach is exemplified by a range of high-dimensional and metastable numerical examples. 
\\\\
\noindent \textbf{Keywords:} Hamilton-Jacobi-Bellman PDEs, forward-backward SDEs, optimal control of diffusions, divergences between probability measures, rare event simulation, deep learning
\end{abstract}
\section{Introduction}
    
Hamilton-Jacobi-Bellman partial differential equations (HJB-PDEs) are of central importance in applied mathematics. Rooted in reformulations of classical mechanics \cite{goldstein2002classical} in the nineteenth century, they nowadays form the backbone of (stochastic) optimal control theory \cite{nisio2014stochastic,yong1999stochastic}, having a profound impact on neighbouring fields such as optimal transportation \cite{villani2003topics,villani2008optimal}, mean field games \cite{carmona2018probabilistic}, backward stochastic differential equations (BSDEs) \cite{carmona2016lectures} and large deviations \cite{feng2006large}. Applications in science and engineering abound; examples include  stochastic filtering and data assimilation \cite{mitter1996filtering,reich2019data}, the simulation of rare events in molecular dynamics \cite{hartmann2014characterization,hartmann2012efficient,zhang2014applications}, and nonconvex optimisation \cite{chaudhari2018deep}. Many of these applications involve HJB-PDEs in high-dimensional or even infinite-dimensional state spaces, posing a formidable challenge for their numerical treatment and in particular rendering grid-based schemes infeasible.
    
In recent years, approaches to approximating the solutions of high-dimensional elliptic and parabolic PDEs have been developed combining well-known Feynman-Kac formulae with machine learning methodologies, seeking scalability and robustness in high-dimensional and complex scenarios \cite{weinan2017deep, han2018solving}. Crucially, the use of artificial neural networks offers the promise of accurate and efficient function approximation which in conjunction with Monte Carlo methods might beat the \emph{curse of dimensionality}, as investigated in \cite{beck2020overcoming,cheridito2019efficient,grohs2019deep,hutzenthaler2019proof}.
    
In this paper, we focus on HJB-PDEs that can be linked to \emph{controlled diffusions} (see Section \ref{sec: connections}),
\begin{equation}
\label{eq:controlled diffusion}
\mathrm d X_s^u = \left(b(X_s^u,s) + \sigma(X_s^u,s) u(X_s^u,s)\right) \mathrm ds + \sigma(X^u_s,s) \, \mathrm dW_s, \qquad X_0^u = x_{\mathrm{init}},
\end{equation}
where $b$ and $\sigma$ are coefficients derived from the model at hand, and $u$ is to be thought of as an adaptable steering force to be chosen so as to minimise a given objective functional. In terms of the problems and applications alluded to in the first paragraph, we are particularly interested in situations where applying a suitable control $u$ improves certain properties of \eqref{eq:controlled diffusion}; often these are related to sampling efficiency, exploration of state space, or fit to empirical data. We have been particularly motivated by the prospect of directing recent advances in the methodology for solving high-dimensional HJB-PDEs towards the challenges of rare event simulation \cite{bucklew2013introduction}.
    
Our attention in this paper is constrained to a class of algorithms that may be termed \emph{iterative diffusion optimisation} (IDO) techniques, related in spirit to reinforcement learning \cite{powell2019reinforcement}. Speaking in broad terms, those are characterised by the following outline of steps meant to be executed iteratively until convergence or until a satisfactory control $u$ is found:
    
\begin{enumerate}
\item
\label{it:step 1}
Simulate $N$ realisations $\{(X_s^{u,(i)})_{0 \le s \le T}, \,\, i=1,\ldots,N\}$ of the solution to \eqref{eq:controlled diffusion}.    
\item
Compute a performance measure and a corresponding gradient associated to the control $u$, based on\\ ${\{(X_s^{u,(i)})_{0 \le s \le T}, \,\, i=1,\ldots,N\}}$.
\item
Modify $u$ according to the gradient obtained in the previous step. Repeat starting from \ref{it:step 1}.
\end{enumerate}
Many algorithmic approaches from the literature can be placed in the IDO framework, in particular some that connect forward-backward SDEs and machine learning \cite{weinan2017deep,han2018solving} as well as some that are rooted in molecular dynamics and optimal control \cite{hartmann2012efficient,kappen2012optimal,zhang2014applications}. Those instances of IDO mainly differ in terms of the performance measure employed in step 2, or, in other words, in terms of an underlying loss function $\mathcal{L}(u)$ constructed on the set of control vector fields. Typically, $\mathcal{L}(u)$ is given in terms of expectations 
involving the solution to \eqref{eq:controlled diffusion}. Consequently, step 1 can be thought of as providing an empirical estimate of this quantity (and its gradient) based on a sample of size $N$.
    
For a principled design and understanding of IDO-like algorithms, it is central to analyse the properties of loss functions and corresponding Monte Carlo estimators, and identify guidelines that promise good performance.       
Permissible loss functions include those that admit a global minimum representing the solution to the problem at hand. Moreover, suitable loss functions yield themselves to efficient optimisation procedures (step 3) such as stochastic gradient descent. In this respect, important desiderata are the absence of local minima as well as the availability of low-variance gradient estimators.

In this article, we show that a variety of loss functions can be constructed and analysed in terms of divergences between probability measures on the path space associated to solutions of \eqref{eq:controlled diffusion}, providing a unifying framework for IDO and extending on previous works in that direction \cite{hartmann2012efficient,kappen2012optimal,zhang2014applications}. As this perspective entails the approximation of a target probability measure as a core element, our approach exposes connections to the theory of variational inference \cite{blei2017variational,zhang2018advances}. Classical divergences include the relative entropy (or $\mathrm{KL}$-divergence) and its counterpart, the cross-entropy. Motivated by connections to forward-backward SDEs and importance sampling, we propose the novel family of \emph{log-variance} divergences,
\begin{equation}
\label{eq:log var intro}
D^{\mathrm{Var(log)}}_{\widetilde{\mathbb{P}}}(\mathbb{P}_1 \vert \mathbb{P}_2) = {\Var}_{\widetilde{\mathbb{P}}} \left( \log \frac{\mathrm{d}\mathbb{P}_2}{\mathrm{d}\mathbb{P}_1}\right),
\end{equation}
parametrised by a probability measure $\widetilde{\mathbb{P}}$. Loss functions based on these divergences can be viewed as modifications of those proposed in \cite{weinan2017deep,han2018solving} for solving forward-backward SDEs, essentially replacing second moments by variances, see Section \ref{sec:BSDEs}. Moreover, it turns out that the log-variance divergences are closely related to the $\KL$-divergence (see Proposition \ref{prop: equivalence moment log-variance}), allowing us to draw (perhaps surprising) connections to methods that directly attempt to optimise the dynamics with respect to a control objective.
    
As the loss functions considered in this article are defined in terms of expected values, practical implementations require appropriate Monte Carlo estimators whose variance directly impacts algorithmic performance. We study the associated relative errors, in particular in high-dimensional settings and for $\P_1 \approx \P_2$, i.e. close to the optimal control. The proposed log-variance divergence and its corresponding standard Monte Carlo estimator turn out to be robust in both settings, in a precise sense that will be developed in later sections. After the completion of this manuscript, the potential of the log-variance divergences for inferences in computational Bayesian statistics has been explored in \cite{richter2020vargrad}, along with a more careful analysis of their relations to control variates (see also Remark \ref{remark: control variate} below). 
    
\subsection{Our contributions and overview}

The primary contributions of this article can be summarised as follows:

\begin{enumerate}
    \item
    Building on earlier work connecting optimal control functionals and the $\mathrm{KL}$-divergence \cite{hartmann2012efficient,kappen2012optimal,zhang2014applications}, we develop the perspective of constructing loss functions via divergences on path space, offering a systematic approach to algorithmic design and analysis.
    \item
    We show that modifications of recently proposed approaches based on forward-backward SDEs \cite{weinan2017deep,han2018solving} can be placed within this framework. Indeed, the log-variance divergences \eqref{eq:log var intro} encapsulate a family of forward-backward SDE systems (see Section \ref{sec:BSDEs}). The aforementioned adjustments needed to establish the path space perspective often lead to faster convergence and more accurate approximation of the optimal control, as we show by means of numerical experiments.
    \item
    We show that certain instances of algorithms based on the control objective (or $\mathrm{KL}$-divergence) and forward-backward SDEs (or the log-variance divergences) are equivalent when the sample size $N$ in step 1 is large.
    \item
    We investigate the properties of sample based gradient estimators associated to the losses and divergences under consideration. In particular, we define two notions of stability: robustness of a divergence under tensorisation (related to stability in high-dimensional settings) and robustness at the optimal control solution (related to stability of the final approximation). From the losses and divergences considered in this article, we show that only the log-variance divergences satisfy both desiderata and illustrate our findings by means of extensive numerical experiments. 
\end{enumerate}
    
The paper is structured as follows. In Section \ref{sec: connections} we provide a literature overview, stating connections between different perspectives on the control problem under consideration and summarising corresponding numerical treatments. As a unifying viewpoint, in Section \ref{sec: divergences section} we define viable loss functions through divergences on path space and discuss their connections to the algorithmic approaches encountered in Section \ref{sec: connections}. In particular, we elucidate the relationships of the log-variance divergences with forward-backward SDEs. In the two upcoming sections we analyse properties of the suggested losses, where in Section \ref{sec: infinite batch size} we obtain equivalence relations that hold in an infinite batch size limit and in Section \ref{sec:finite sample properties} we investigate the variances associated to the losses' estimator versions. In the latter case, we consider stability close to the optimal control solution as well as in high dimensional settings. In Section \ref{sec:numerics} we provide numerical examples that illustrate our findings. Finally, we conclude the paper with Section \ref{sec:outlook}, giving an outlook to future research. Most of the proofs are deferred to the appendix.
    
\section{Optimal control problems, change of path measures and Hamilton-Jacobi-Bellman PDEs: connections and equivalences}
\label{sec: connections}
    
In this section we will introduce three different perspectives on essentially the same problem.
Throughout, we will assume a fixed filtered probability space $(\Omega, \mathcal{F},(\mathcal{F}_t)_{t \ge 0}, \Theta)$ satisfying the `usual conditions' \cite[Section 21.4]{klenke2013probability} and  consider stochastic differential equations (SDEs) of the form
\begin{equation}
\label{eq:uncontrolled SDE}
\mathrm d X_s = b(X_s,s) \, \mathrm ds + \sigma(X_s,s) \, \mathrm dW_s, \qquad X_t = x_{\mathrm{init}},
\end{equation}
on the time interval $s \in [t,T]$, $0 \le t < T < \infty$. Here, $b: \mathbb{R}^d \times [t, T] \to \R^d$ denotes the drift coefficient, $\sigma: \R^d \times [t,T]\to \R^{d\times d}$ denotes the diffusion coefficient, $(W_s)_{t \le s \le T}$ denotes standard $d$-dimensional Brownian motion, and $x_{\mathrm{init}} \in \mathbb{R}^d$ is the (deterministic) initial condition.  We will work under the following conditions specifying the regularity of $b$ and $\sigma$. 
\begin{assumption}[Coefficients of the SDE \eqref{eq:uncontrolled SDE}]
\label{ass:SDE coefficients}
The coefficients $b$ and $\sigma$ are continuously differentiable, $\sigma$ has bounded first-order spatial derivatives, and $(\sigma \sigma^\top)(x,s)$ is positive definite for all $(x,s) \in \mathbb{R}^d \times [t,T]$. Furthermore,
there exist constants $C, c_1, c_2>0$ such that 
\begin{subequations}
\begin{align}
\label{eq:linear growth}
\vert b(x,s) \vert \le C \left( 1 + \vert x \vert \right), \qquad \qquad \qquad \qquad &\text{(linear growth)} \\
c_1 \vert \xi \vert^2 \le \xi \cdot (\sigma\sigma^\top)(x,s) \xi \le c_2 \vert \xi \vert^2, \qquad \qquad \qquad \qquad & \text{(ellipticity)}
\end{align}
\end{subequations}
for all $(x,s) \in \mathbb{R}^d \times [t,T]$ and $\xi \in \mathbb{R}^d$.
\end{assumption}
    
Let us furthermore introduce a modified version of \eqref{eq:uncontrolled SDE},
\begin{equation}
\label{eq:controlled SDE}
\mathrm d X_s^u = \left(b(X_s^u,s) + \sigma(X_s^u,s) u(X_s^u,s)\right) \mathrm ds + \sigma(X^u_s,s) \, \mathrm dW_s, \qquad X_t^u = x_{\mathrm{init}},
\end{equation}
where we think of $u: \mathbb{R}^d \times [t,T] \to \mathbb{R}^d$ as a control term steering the dynamics. We will throughout assume that $u \in \mathcal{U}$, the set of \emph{admissible controls}.
For definiteness, we will set
\begin{equation}
\label{eq:control set}
\mathcal{U} = \left\{  u \in C^1(\mathbb{R}^d \times [0,T];\mathbb{R}^d): \quad u \,\, \text{grows at most linearly in $x$, in the sense of \eqref{eq:linear growth}}\right\},
\end{equation}
but note that the smoothness and boundedness assumptions can be relaxed in various scenarios. Under Assumption \ref{ass:SDE coefficients} and with $\mathcal{U}$ as defined in \eqref{eq:control set}, the SDEs \eqref{eq:uncontrolled SDE} and \eqref{eq:controlled SDE} admit unique strong solutions according to \cite[Theorem 5.2.1]{oksendal2013stochastic}.

\subsection{Optimal control}
\label{sec:optimal control}
    
Consider the cost functional
\begin{equation}
\label{eq:cost functional}
J(u; x_{\mathrm{init}},t) = {\E}\left[ \int_t^T \left(  f(X^u_s, s) + \frac{1}{2}|u(X^u_s, s)|^2 \right)\mathrm ds + g(X^u_T) \Bigg| X_t^u = x_{\mathrm{init}}  \right],
\end{equation}
where $f \in C^1( \mathbb{R}^d \times [t,T]; [0 ,\infty))$ specifies a part of the running and $g \in C^1( \mathbb{R}^d; \mathbb{R})$ the terminal costs, and $(X^u_s)_{t \le s \le T}$ denotes the unique strong solution to the controlled SDE \eqref{eq:controlled SDE} with initial condition $X_t^u = x_{\mathrm{init}}$. Throughout we assume that $f$ and $g$ are such that the expectation in \eqref{eq:cost functional} is finite, for all $(x_{\mathrm{init}},t) \in \mathbb{R}^d \times [0,T]$. Our objective is to find a control $u \in \mathcal{U}$ that minimises \eqref{eq:cost functional}: 
    
\begin{problem}[Optimal control]
\label{prob:optimal control}
For $(x_{\mathrm{init}},t) \in \mathbb{R}^d \times [0,T]$, find $u^* \in \mathcal{U}$ such that 
\begin{equation}
\label{eq:optimal control min}
J(u^*; x_{\mathrm{init}}, t) = \inf_{u \in \mathcal{U}} J(u;x_{\mathrm{init}},t).
\end{equation}
\end{problem}
    
Defining the \emph{value function}  \cite[Section I.4]{fleming2006controlled}, or `optimal cost-to-go',
\begin{equation}
\label{eq:value_function_definition}
V(x, t) = \inf_{u \in \mathcal{U}} J(u; x, t),
\end{equation}
it is well-known that under suitable conditions, $V$ satisfies a Hamilton-Jacobi-Bellman PDE involving the infinitesimal generator \cite[Section 2.3]{pavliotis2014stochastic} associated to the uncontrolled SDE \eqref{eq:uncontrolled SDE},
\begin{equation}
L = \frac{1}{2} \sum_{i,j=1}^d (\sigma \sigma^\top)_{ij}(x,t) \partial_{x_i} \partial_{x_j} + \sum_{i=1}^d b_i(x,t) \partial_{x_i}.
\end{equation}
The optimal control solving \eqref{eq:optimal control min} can then be  recovered from $u^* = -\sigma^\top \nabla V$ (see Theorem \ref{thm:connections} for details). Let us state this reformulation of Problem \ref{prob:optimal control} as follows:
\begin{problem}[Hamilton-Jacobi-Bellman PDE]
\label{prob:HJB}
Find a solution $V$ to the PDE
\begin{subequations}
\label{eq:HJB}
\begin{align}
\label{eq:HJB 1st}
(L + \partial_t) V(x,t) - \frac{1}{2} \vert \sigma^\top \nabla V(x,t) \vert^2 + f(x,t) & = 0, \qquad  & (x,t) \in \mathbb{R}^d \times [0,T), \\
\label{eq:HJB final condition}
V(x,T) & = g(x), \qquad  & x \in \mathbb{R}^d,
\end{align}
\end{subequations}
where $f$ and $g$ are as in \eqref{eq:cost functional}.
\end{problem}
Throughout, we will focus on solutions to \eqref{eq:HJB} that admit bounded and continuous derivatives of up to first order in time and second order in space (see, however, Remark \ref{rem:viscosity}). This set will be denoted by $C_b^{2,1}(\mathbb{R}^d \times [0,T];\mathbb{R})$. Solutions to elliptic and parabolic PDEs admit probabilistic representations by means of the celebrated Feynman-Kac formulae  \cite[Sections 1.3.3 and 6.3]{pham2009continuous}. To wit, consider the following coupled system of forward-backward SDEs (in the following FBSDEs for short):
\begin{problem}[Forward-backward SDEs]
\label{prob:FBSDE}
For $(x_{\mathrm{init}},t) \in \mathbb{R}^d \times [0,T]$,
find progressively measurable stochastic processes $Y : \Omega \times [t,T] \to \mathbb{R}$ and $Z : \Omega \times [t,T] \to \mathbb{R}^d$ such that 
\begin{subequations}
\label{eq:FBSDE}
\begin{align}
\label{eq:forward SDE}
\mathrm{d} X_s & = b(X_s, s) \, \mathrm{d} s + \sigma(X_s, s) \, \mathrm{d} W_s,  \quad  &X_t   = x_{\mathrm{init}},  \\
\label{eq:backward SDE}
\mathrm{d} Y_s & = -f(X_s,s) \, \mathrm{d} s + \frac{1}{2} \vert Z_s \vert^2 \, \mathrm{d}s + Z_s \cdot \mathrm{d} W_s, \quad &  Y_T   = g(X_T),
\end{align}
\end{subequations}
almost surely.
\end{problem}
Under suitable conditions, It\^{o}'s formula implies that $Y$ is connected to the value function $V$ as defined in \eqref{eq:value_function_definition} via $Y_s = V(X_s,s)$. Similarly, $Z$ is connected to the optimal control $u^*$ through $Z_s = -u^*(X_s,s) = \sigma^\top \nabla V(X_s,s)$. See \cite{pardoux1998backward,pardoux1990adapted} and Theorem \ref{thm:connections} for details.
    
\subsection{Conditioning and rare events}
\label{sec:change-of-path-measure}
    
One major motivation for our work is the problem of sampling rare transition events in diffusion models. In this section we will explain how this challenge can be formalised in terms of weighted measures on path space, leading to a close connection to the optimal control problems encountered in the previous section.
    
We will fix the initial time to be $t=0$, i.e. consider the SDEs \eqref{eq:uncontrolled SDE} and \eqref{eq:controlled SDE} on the interval $[0,T]$. For fixed initial condition $x_{\mathrm{init}} \in \mathbb{R}^d$, let us introduce the path space
\begin{equation}
\mathcal{C} = C_{x_{\mathrm{init}}}([0,T];\mathbb{R}^d) = \left\{ X: [0,T]   \rightarrow \mathbb{R}^d  \,\, \vert \,\, X \; \text{continuous}, \; X_0 = x_{\mathrm{init}} \right\},
\end{equation}
equipped with the supremum norm and the corresponding Borel-$\sigma$-algebra, and denote the set of probability measures on $\mathcal{C}$ by $\mathcal{P}(\mathcal{C})$. The SDEs \eqref{eq:uncontrolled SDE} and \eqref{eq:controlled SDE} induce probability measures on $\mathcal{C}$ defined to be the laws associated to the corresponding strong solutions; those measures will be denoted by $\mathbb{P}$ and $\mathbb{P}^u$, respectively\footnote{Of course, we have that $\mathbb{P}^0$ coincides with the path measure associated to the uncontrolled dynamics, i.e. $\mathbb{P}^0 = \mathbb{P}$.}. Furthermore, we define the \emph{work functional} $\mathcal{W}:\mathcal{C} \to \mathbb{R}$ via
\begin{equation}
\label{eq:work} \mathcal{W}(X)= \int_0^T f(X_s, s) \, \mathrm ds + g(X_T),
\end{equation}
where $f:\mathbb{R}^d \times [0,T] \to \mathbb{R}$ and $g:\mathbb{R}^d \to \mathbb{R}$ are as in Problem \ref{prob:optimal control}. Finally, $\mathcal{W}$ induces a \emph{reweighted} path measure $\mathbb{Q}$ on $\mathcal{C}$ via
\begin{equation}\label{eq:reweighted measure}
\frac{\mathrm d \Q}{\mathrm d \P} = \frac{e^{-\mathcal{W}}}{\mathcal{Z}}, \qquad \mathcal{Z} = \mathbb{E} \left[ \exp(-\mathcal{W}(X)) \right],
\end{equation}
assuming $f$ and $g$ are such that $\mathcal{Z}$ is finite (we shall tacitly make this assumption from now on).
We may ask whether $\mathbb{Q}$ can be obtained as the path measure related to a controlled SDE of the form \eqref{eq:controlled SDE}:
\begin{problem}[Conditioning]
\label{prob:conditioning}
Find $u^* \in \mathcal{U}$ such that the path measure $\mathbb{P}^{u^*}$ associated to \eqref{eq:controlled SDE} coincides with $\mathbb{Q}$.
\end{problem}
Referring to the above as a  conditioning problem is justified by the fact that \eqref{eq:reweighted measure} may be viewed as an instance of Bayes' formula relating conditional probabilities \cite{reich2019data}. This connection can be formalised using Doob's $h$-transform \cite{doob1957conditional,doob2012classical} and applied to diffusion bridges and quasistationary distributions, for instance (see \cite{chetrite2015nonequilibrium} and references therein). 
\begin{example}[Rare event simulation]
\label{ex:rare events}
Let us consider SDEs of the form \eqref{eq:uncontrolled SDE}, where the drift is a gradient, i.e. $b = - \nabla \Psi$, and the potential $\Psi$ is of multimodal type.
As an example we shall discuss the one-dimensional case $d=1$ and assume that $\Psi \in C^\infty(\mathbb{R})$ is given by
\begin{equation}
\Psi(x) = \kappa (x^2-1)^2,
\end{equation}
with $\kappa > 0$. Furthermore, let us fix the initial conditions $x_{\mathrm{init}} = -1$ and $t=0$, and 
assume a constant diffusion coefficient of size unity, $\sigma = 1$.
Observe that $\Psi$ exhibits two local minima at $x = \pm 1$, separated by a barrier at $x=0$, the height of which is modulated by the parameter $\kappa$ (see Figure \ref{fig: double well illustration} in Section \ref{sec:double well} for an illustration). When $\kappa$ is sufficiently large, the dynamics induced by \eqref{eq:uncontrolled SDE} exhibits metastable behaviour: transitions between the two basins happen very rarely as the transition time depends exponentially on the height of the barrier \cite{berglund2011kramers,kramers1940brownian}. Applications such as molecular dynamics are often concerned with statistics and derived quantities from these rare events as those are typically directly linked to biological functioning \cite{weinan2004metastability,schutte2003biomolecular,schutte2013metastability}. At the same time, computational approaches face a difficult sampling problem as transitions are hard to obtain by direct simulation from \eqref{eq:uncontrolled SDE}. Choosing $f  = 0$ and $g$ such that $e^{-g}$ is concentrated around $x=1$ (consider, for instance, $g(x) = \nu(x-1)^2$ with $\nu > 0$ sufficiently large), we see that $\mathbb{Q}$ as defined in \eqref{eq:reweighted measure} predominantly charges paths initialised in $x=-1$ at $t=0$ and enter a neighbourhood of $x=1$ at final time $T$. Problem \ref{prob:conditioning} can then be understood as the task of finding a control $u$ that allows efficient simulation of transition paths. Similar issues arise in the context of stochastic filtering, where the objective is to sample paths that are compatible with available data \cite{reich2019data}.
\end{example}
    
\subsection{Sampling problems}
    
The \emph{free energy} \cite{hartmann2017variational} associated to the dynamics \eqref{eq:uncontrolled SDE} and the work functional \eqref{eq:work} is given by
\begin{equation}
\label{eqn: free energy}
\gamma = - \log \mathbb{E} \left[\exp (-\mathcal{W}(X)) \right] = -\log \mathcal{Z},
\end{equation}
where the normalising constant $\mathcal{Z}$ has been defined in \eqref{eq:reweighted measure}. The problem of computing $\mathcal{Z}$ is ubiquitous in nonequilibrium thermodynamics and statistics \cite{blei2017variational,stoltz2010free}, and, quite often, the variance associated to the random variable $\exp (-\mathcal{W}(X))$ is so large as to render direct estimation of the expectation $\mathbb{E} \left[\exp (-\mathcal{W}(X)) \right]$ computationally infeasible\footnote{In fact, the variance is particularly large in metastable scenarios such as those sketched in Example \ref{ex:rare events}.}. A natural approach is then to use the identity
\begin{equation}
\label{eq:importance sampling}
\mathbb{E}\left[\exp (-\mathcal{W}(X))\right] = \mathbb{E} \left[\exp(-\mathcal{W}(X^u)) \frac{\mathrm{d}\mathbb{P}}{\mathrm{d}\mathbb{P}^u}  \right], \qquad u \in \mathcal{U},
\end{equation}
where we recall that $X$ and $X^u$ refer to the strong solutions to \eqref{eq:uncontrolled SDE} and \eqref{eq:controlled SDE}, respectively, and $\frac{\mathrm{d}\mathbb{P}}{\mathrm{d}\mathbb{P}^u}$ denotes the Radon-Nikodym derivative, explicitly given by Girsanov's theorem\footnote{By a slight abuse of notation, \eqref{eq:Girsanov IS} is to be interpreted as a random variable on $\Omega$ provided by the measurable map $\omega \mapsto X^u$ induced by \eqref{eq:controlled SDE}. In other words, the left-hand side should be read as $\frac{\mathrm{d}\P}{\mathrm{d}\P^u}(X^u(\omega))$.} \cite[Theorem 2.1.1]{ustunel2013transformation},
\begin{equation}
\label{eq:Girsanov IS}
\frac{\mathrm{d}\mathbb{P}}{\mathrm{d}\mathbb{P}^u}  = \exp \left( -\int_0^T u(X_s^u,s) \cdot \, \mathrm{d}W_s - \frac{1}{2} \int_0^T \vert u(X_s^u,s) \vert^2 \, \mathrm{d}s \right),
\end{equation}
see the proof of Theorem \ref{thm:connections}.
As explained in \cite{hartmann2017variational}, techniques leveraging \eqref{eq:importance sampling} may be thought of as instances of importance sampling on path space. Given that \eqref{eq:importance sampling} holds for all $u \in \mathcal{U}$, it is clearly desirable to choose the control such as to guarantee favourable statistical properties:
\begin{problem}[Variance minimisation]
\label{prob:variance minimisation}
Find $u^* \in \mathcal{U}$ such that
\begin{equation}
\label{eq:variance}
\Var \left(\exp(-\mathcal{W}(X^{u^*})) \frac{\mathrm{d}\mathbb{P}}{\mathrm{d}\mathbb{P}^{u^*}}  \right) = \inf_{u \in \mathcal{U}}  \Var \left(\exp(-\mathcal{W}(X^{u})) \frac{\mathrm{d}\mathbb{P}}{\mathrm{d}\mathbb{P}^{u}}  \right).
\end{equation}
\end{problem}
Under suitable conditions, it turns out that there exists $u^* \in \mathcal{U}$ such the variance expression \eqref{eq:variance} is  in fact zero (see Theorem \ref{thm:connections}, \eqref{thm: optimal control implies variance zero}), providing a perfect sampling scheme. 
\par\bigskip    
The problem formulations detailed so far are intimately connected as summarised by the following theorem:

\begin{theorem}[Connections and equivalences]
\label{thm:connections}
The following holds:
\begin{enumerate}
\item Let $V \in C_b^{2,1}(\mathbb{R}^d \times [0,T];\mathbb{R})$ be a solution to Problem \ref{prob:HJB}, i.e. solve the HJB-PDE \eqref{eq:HJB}. Set 
\begin{equation}
\label{eq:connections u star}
u^* =-\sigma^\top \nabla V.
\end{equation}
Then
\begin{enumerate}
\item
\label{equiv_a}
the control $u^*$ provides a solution to Problem \ref{prob:optimal control}, i.e. $u^*$ minimises the objective \eqref{eq:cost functional},
\item
\label{it:HJB->FBSDE}
the pair 
\begin{equation}
\label{eq:YZ in terms of V}
Y_s = V(X_s, s), \qquad Z_s = \sigma^\top \nabla V(X_s, s)
\end{equation}
solves the FBSDE \eqref{eq:FBSDE}, i.e.  Problem \ref{prob:FBSDE},
\item
\label{it:measure}
the measure $\mathbb{P}^{u^*}$ associated to the controlled SDE \eqref{eq:controlled SDE} coincides with $\mathbb{Q}$, i.e. $u^*$ solves Problem \ref{prob:conditioning},
\item
\label{thm: optimal control implies variance zero}
the control $u^*$ provides the minimum-variance estimator in \eqref{eq:variance}, i.e. $u^*$ solves Problem \ref{prob:variance minimisation}. Moreover, the variance is in fact zero, i.e. the random variable
\begin{equation}
\exp(-\mathcal{W}(X^{u^*})) \frac{\mathrm{d}\mathbb{P}}{\mathrm{d}\mathbb{P}^{u^*}}
\end{equation}
is almost surely constant.
\end{enumerate}
Furthermore, we have that
\begin{equation}
\label{eq:equivalent quantities}
J(u^*; x_{\mathrm{init}},0) = V(x_{\mathrm{init}},0) = Y_0 = - \log \mathcal{Z}.
\end{equation}
\item
Conversely, let $u^* \in \mathcal{U}$ solve Problem \ref{prob:conditioning}, i.e. assume that $\mathbb{P}^{u^*}$ coincides with $\mathbb{Q}$. Then the statement \eqref{thm: optimal control implies variance zero} holds. Furthermore, setting 
\begin{equation}
\label{eq:Y0 Z u}
Y_0 = -\log \mathcal{Z}, \qquad Z_s = -u^*(X_s,s),
\end{equation}
solves the backward SDE \eqref{eq:backward SDE} from Problem \ref{prob:FBSDE}, i.e. \eqref{eq:Y0 Z u} together with the first equation in \eqref{eq:backward SDE} determines a process $(Y_s)_{0 \le s \le T}$ that satisfies the final condition $Y_T = g(X_T)$, almost surely.
\end{enumerate}
\end{theorem}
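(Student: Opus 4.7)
The plan is to handle both directions using It\^{o}'s formula combined with Girsanov's theorem, under the smoothness, ellipticity and linear growth hypotheses of Assumption \ref{ass:SDE coefficients} and the admissibility set \eqref{eq:control set}.

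For Part 1, I would begin with the classical verification argument for \eqref{equiv_a}. Applying It\^{o}'s formula to $V(X_s^u, s)$ along \eqref{eq:controlled SDE}, substituting $(L + \partial_t)V = -f + \tfrac{1}{2}|\sigma^\top \nabla V|^2$ from the HJB equation \eqref{eq:HJB 1st}, completing the square in $u$, and taking expectations of the resulting identity (the stochastic integral being a true martingale under the admissibility assumptions) yields
\begin{equation*}
J(u; x_{\mathrm{init}}, 0) = V(x_{\mathrm{init}}, 0) + \tfrac{1}{2}\, \E \int_0^T |u - u^*|^2(X_s^u, s) \, \mathrm{d}s,
\end{equation*}
which simultaneously establishes \eqref{equiv_a} and the identity $J(u^*; x_{\mathrm{init}}, 0) = V(x_{\mathrm{init}}, 0)$, with equality (up to a.e.\ equivalence) characterising the optimiser. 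Running the same It\^{o} expansion, this time on $V(X_s, s)$ along the \emph{uncontrolled} dynamics \eqref{eq:forward SDE} and again appealing to \eqref{eq:HJB 1st}, produces exactly the backward SDE \eqref{eq:backward SDE} with $Y_s = V(X_s,s)$ and $Z_s = \sigma^\top \nabla V(X_s, s)$, proving \eqref{it:HJB->FBSDE} and giving $Y_0 = V(x_{\mathrm{init}}, 0)$.

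Items \eqref{it:measure} and \eqref{thm: optimal control implies variance zero} together with the remaining identity $V(x_{\mathrm{init}}, 0) = -\log \mathcal{Z}$ then follow from Girsanov's formula \eqref{eq:Girsanov IS}. Integrating the It\^{o} expansion of the previous step from $0$ to $T$ and rearranging, one recognises the exponent in
\begin{equation*}
\frac{\mathrm{d}\mathbb{P}^{u^*}}{\mathrm{d}\mathbb{P}} = \exp\left( \int_0^T u^*(X_s,s) \cdot \mathrm{d}W_s - \tfrac{1}{2}\int_0^T |u^*(X_s,s)|^2 \, \mathrm{d}s\right)
\end{equation*}
as $V(x_{\mathrm{init}}, 0) - \mathcal{W}(X)$, so that $\mathrm{d}\mathbb{P}^{u^*}/\mathrm{d}\mathbb{P} = e^{V(x_{\mathrm{init}}, 0)}\, e^{-\mathcal{W}}$. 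Taking $\mathbb{P}$-expectations forces $e^{V(x_{\mathrm{init}}, 0)} = 1/\mathcal{Z}$, which simultaneously gives $V(x_{\mathrm{init}},0) = -\log \mathcal{Z}$ and $\mathrm{d}\mathbb{P}^{u^*}/\mathrm{d}\mathbb{P} = \mathrm{d}\mathbb{Q}/\mathrm{d}\mathbb{P}$, hence \eqref{it:measure}. Taking reciprocals one obtains $e^{-\mathcal{W}(X^{u^*})}\, \mathrm{d}\mathbb{P}/\mathrm{d}\mathbb{P}^{u^*} = \mathcal{Z}$ pathwise, proving \eqref{thm: optimal control implies variance zero}.

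For the converse in Part 2, I would run the same identification in reverse: the assumption $\mathbb{P}^{u^*} = \mathbb{Q}$ combined with \eqref{eq:Girsanov IS} yields
\begin{equation*}
\int_0^T u^*(X_s, s) \cdot \mathrm{d}W_s - \tfrac{1}{2} \int_0^T |u^*(X_s,s)|^2\, \mathrm{d}s = -\mathcal{W}(X) - \log \mathcal{Z}
\end{equation*}
almost surely. Substituting $Z_s = -u^*(X_s, s)$, $Y_0 = -\log \mathcal{Z}$ and the definition of $\mathcal{W}$ and rearranging reproduces the integrated backward SDE \eqref{eq:backward SDE} on $[0,T]$ with terminal value $g(X_T)$, as required; the zero-variance statement \eqref{thm: optimal control implies variance zero} follows as in Part 1 directly from $\mathbb{P}^{u^*} = \mathbb{Q}$. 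The principal technical obstacle throughout is ensuring that the exponential local martingales produced by Girsanov are genuine martingales and that the stochastic integrals above have vanishing expectation under the relevant measure; this can be discharged by a Novikov- or Kazamaki-type criterion under the bounded diffusion and linear-growth assumptions, or more robustly by a localisation argument with stopping times, exploiting the $C_b^{2,1}$ regularity of $V$ to bound $\sigma^\top \nabla V$ uniformly on compacts.
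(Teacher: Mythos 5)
Your proposal is correct and follows essentially the same route as the paper: It\^{o}'s formula plus the HJB equation to establish the verification statement \eqref{equiv_a} and the FBSDE identification \eqref{it:HJB->FBSDE} (the paper delegates these to cited verification theorems, whereas you write out the standard completing-the-square argument), and then Girsanov's theorem to identify the Radon--Nikodym exponent with $V(x_{\mathrm{init}},0)-\mathcal{W}(X)$, from which \eqref{it:measure}, \eqref{thm: optimal control implies variance zero}, the equality $V(x_{\mathrm{init}},0)=-\log\mathcal{Z}$, and the converse direction all drop out. The only superficial difference is organizational: the paper first derives $\mathcal{Z}=e^{-Y_0}$ from the integrated backward SDE and a martingale normalization, then establishes $\mathbb{P}^{u^*}=\mathbb{Q}$, while you obtain $e^{V(x_{\mathrm{init}},0)}=1/\mathcal{Z}$ and $\mathrm{d}\mathbb{P}^{u^*}/\mathrm{d}\mathbb{P}=\mathrm{d}\mathbb{Q}/\mathrm{d}\mathbb{P}$ in one stroke from the same identity; these are equivalent.
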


\begin{remark}
We extend the connections between the optimal control formulation (Problem \ref{prob:optimal control}) and FBSDEs (Problem \ref{prob:FBSDE}) in Proposition \ref{prop:log var and KL}, see also Remark \ref{rem:log var and KL}.
\end{remark}

\begin{remark}[Regularity, uniqueness, and further connections]
\label{rem:viscosity}
Going beyond classical solvability of the HJB-PDE \eqref{eq:HJB} and introducing the notion of \emph{viscosity solutions} \cite{fleming2006controlled,pardoux1998backward}, the strong regularity and boundedness assumptions on $V$ in the first statement could be much relaxed and the connections exposed in Theorem \ref{thm:connections} could be extended \cite{pham2009continuous,yong1999stochastic}. As a case in point, we note that in the current setting, neither a solution to Problem \ref{prob:optimal control} nor to Problem \ref{prob:FBSDE} necessarily provides a classical solution to the PDE \eqref{eq:HJB}, as optimal controls are known to be non-differentiable, in general.

However, assuming classical well-posedness of the HJB-PDE \eqref{eq:HJB}, Theorem \ref{thm:connections} implies that the solution can be found by addressing one of the Problems \ref{prob:optimal control}, \ref{prob:FBSDE}, \ref{prob:conditioning} or \ref{prob:variance minimisation} and using the formulas \eqref{eq:connections u star} and \eqref{eq:YZ in terms of V}, as long as those problems admit \emph{unique} solutions, in an appropriate sense. For the latter issue, we refer the reader to \cite{kobylanski2000backward} and \cite[Chapter 11]{touzi2012optimal} in the context of forward-backward SDEs and to \cite{bierkens2014explicit} in the context of measures on path space. We note that, in particular, the forward SDE \eqref{eq:forward SDE} can be thought of as providing a random grid for the solution of the HJB-PDE \eqref{eq:HJB}, obtained through the backward SDE \eqref{eq:backward SDE}.   
\end{remark}
\begin{remark}
[Random initial conditions]
\label{rem:randomisation}
The equivalence between Problems \ref{prob:HJB} and \ref{prob:FBSDE} shows that $u^*$ does not depend on $x_{\mathrm{init}}$. Consequently, the initial condition in \eqref{eq:forward SDE} can be random rather than deterministic. In Section \ref{sec: LQGC} we demonstrate potential benefits of this extension for FBSDE-based algorithms. 
\end{remark}
\begin{remark}[Variational formulas and duality] The identities \eqref{eq:equivalent quantities} connect key quantities pertaining to the problem formulations \ref{prob:optimal control}, \ref{prob:HJB}, \ref{prob:FBSDE} and \ref{prob:conditioning}. The fact that $J(u^*; x_{\mathrm{init}},0) =  - \log \mathcal{Z}$ can moreover be understood in terms of the Donsker-Varadhan formula \cite{boue1998variational}, furnishing an explicit expression for the value function,
\begin{equation}
\label{eq:DV}
    V(x,t) = -\log \mathbb{E} \left[ \exp \left(-\int_t^T f(X_s,s)\,\mathrm{d}s - g(X_T) \right) \Bigg| X_t = x\right],
\end{equation}
 as discussed in \cite{daipra91,dai1996connections,hartmann2017variational}.
\end{remark}
\begin{remark}[Generalisations]
The problem formulations \ref{prob:optimal control}, \ref{prob:HJB} and \ref{prob:FBSDE} admit generalisations that keep parts of the connections expressed in Theorem \ref{thm:connections} intact. 
From the PDE-perspective (Problem \ref{prob:HJB}), it is possible to consider more general nonlinearities,
\begin{subequations}
\label{eq:nonlinear PDE}
\begin{align}
\label{eq:nonlinear 1st}
(L + \partial_t) V(x,t) + h(x,t,V(x,t),(\sigma^\top \nabla V)(x,t)) & = 0,   & (x,t) \in \mathbb{R}^d \times [0,T), \\
\label{eq:nonlinear terminal}
V(x,T) & = g(x),   & x \in \mathbb{R}^d,
\end{align}
\end{subequations}
with $h$ being a function satisfying appropriate regularity and boundedness assumptions. As in Theorem \ref{thm:connections} (\ref{it:HJB->FBSDE}), the nonlinear parabolic PDE \eqref{eq:nonlinear PDE} is related to a generalisation of the forward-backward system \eqref{eq:FBSDE},
\begin{subequations}
\label{eq:FBSDE generalised}
\begin{align}
\label{eq:generalised forward}
\mathrm{d} X_s & = b(X_s, s) \, \mathrm{d} s + \sigma(X_s, s) \, \mathrm{d} W_s,  \quad  &X_t   = x_{\mathrm{init}}, \\
\label{eq:generalised backward}
\mathrm{d} Y_s & = -h(X_s,s,Y_s,Z_s) \, \mathrm{d} s + Z_s \cdot \mathrm{d} W_s, \quad &  Y_T   = g(X_T),
\end{align}
\end{subequations}
where the connection is still given by \eqref{eq:YZ in terms of V}, see \cite[Section 6.3]{pham2009continuous}.
From the perspective 
of optimal control (Problem \ref{prob:optimal control}), it is possible to extend the discussion to SDEs of the form 
\begin{equation}
\mathrm d X_s^u = \widetilde{b}(X_s^u, s, u_s) \, \mathrm d s + \widetilde{\sigma}(X_s^u, s, u_s)\, \mathrm d W_s,
\end{equation}
replacing  \eqref{eq:controlled SDE}, and to running costs $\widetilde{f}(X^u_s, u_s, s)$ instead of $f(X^u_s, s) + \frac{1}{2}|u(X^u_s, s)|^2$ in \eqref{eq:cost functional}, assuming that $u_s \in \widetilde{U} \subset \mathbb{R}^m $, for some $m \in \mathbb{N}$.
This setting gives rise to more general HJB-PDEs,
\begin{equation}
\label{eq:general HJB}
\partial_t V(x,t) + H(x, t, \nabla V(x, t), \nabla^2 V(x, t)) = 0,
\end{equation}
where $\nabla^2 V$ denotes the Hessian of $V$, 
and the \emph{Hamiltonian} $H$ is given by
\begin{equation}
\label{eq:Ham}
H(x, t, p, A) = \inf_{u \in \widetilde{U}}   \left[ \widetilde{b}(x,t,u)\cdot p + \tfrac{1}{2} \Tr(\widetilde{\sigma} \widetilde{\sigma}^\top A)(x,t,u) + \widetilde{f}(x,t,u)
\right], 
\end{equation}
see
\cite{fleming2006controlled,pham2009continuous}.
In certain scenarios \cite[Section 4.5.2]{zhang2017backward}, it is then possible to relate \eqref{eq:general HJB} to \eqref{eq:FBSDE generalised}, noting however that typically $h$ will be given in terms of a minimisation problem as in \eqref{eq:Ham}.
The relationship to Problems \ref{prob:conditioning} and \ref{prob:variance minimisation} as well as the identity \eqref{eq:connections u star} rest on the particular structure\footnote{Note that this structure connects the PDEs \eqref{eq:general HJB} and \eqref{eq:HJB} in view of $H(x, t, \nabla V, \nabla^2 V) = LV +f +\min_{u\in \mathcal{U}}\left\{ \sigma u \cdot \nabla V + \frac{1}{2}|u|^2\right\}$ and $\min_{u\in \mathcal{U}}\left\{ \sigma u \cdot \nabla V + \frac{1}{2}|u|^2\right\} = -\frac{1}{2} |\sigma^\top \nabla V|^2 $.} inherent in \eqref{eq:controlled SDE} and \eqref{eq:cost functional}, enabling the use of Girsanov's theorem (see the Proof of Theorem \ref{thm:connections} below).
The methods developed in this paper based on the log-variance loss \eqref{def_log_variance_loss} can straightforwardly be extended to equations of the form \eqref{eq:nonlinear PDE} in the case when $h$ depends on $V$ only through $\nabla V$, owing to the invariance of the PDE under shifts of the form $V \mapsto V + \mathrm{const.}$, see Remark \ref{rem:shift V}. In order to address optimal control problems involving additional minimisation tasks posed by Hamiltonians such as \eqref{eq:Ham} it might be feasible to include appropriate penalty terms in the loss functional. We leave this direction for future work.
\end{remark}
\begin{proof}[Proof of Theorem \ref{thm:connections}]
The statement \eqref{equiv_a} is a classical result in stochastic optimal control theory, often referred to as a  \textit{verification theorem}, and can for instance be found in \cite[Theorem IV.4.4]{fleming2006controlled} or \cite[Theorem 3.5.2]{pham2009continuous}.
The implication \eqref{it:HJB->FBSDE} is a direct consequence of It\^{o}'s formula, cf. \cite[Proposition 6.3.2]{pham2009continuous} or \cite[Proposition 2.14]{carmona2016lectures}. Before proceeding to \eqref{it:measure}, we note that the first equality in \eqref{eq:equivalent quantities} now follows from \eqref{eq:value_function_definition} (for background, see \cite[Section  IV.2]{fleming2006controlled}), while the second equality is a direct consequence of \eqref{it:HJB->FBSDE}. Using \eqref{eq:FBSDE} and \eqref{it:HJB->FBSDE}, the third equality follows from
\begin{equation}
\mathcal{Z} = \mathbb{E} \left[ \exp (-\mathcal{W}(X)\right] = \exp(-Y_0) \cdot  \mathbb{E}\left[ \exp \left( \int_0^T u^*(X_s,s) \cdot \mathrm{d}W_s - \frac{1}{2} \int_0^T \vert u^*(X_s,s)\vert^2 \mathrm ds \right)\right] = \exp(-Y_0),
\end{equation}
relying on the facts that $Y_0$ is deterministic (again using \eqref{it:HJB->FBSDE}), and that the term inside the second expectation is a martingale (as $u^*$ is assumed to be bounded). 
Turning to \eqref{it:measure}, let us define an equivalent measure $\widetilde{\Theta}$ on $(\Omega,\mathcal{F})$ via 
\begin{equation}
\label{eq:Theta tilde}
\frac{\mathrm{d}\widetilde{\Theta}}{\mathrm{d}\Theta} = \exp \left( \int_0^T u^*(X_s,s) \cdot \mathrm{d}W_s - \frac{1}{2} \int_0^T \vert u^*(X_s,s) \vert^2 \, \mathrm{d}s \right).
\end{equation}
Since $u^*$ is assumed to be bounded, Novikov's condition is satisfied, and hence Girsanov's theorem asserts that the process $(\widetilde{W}_t)_{0 \le t \le T}$ defined by
\begin{equation}
\widetilde{W}_t = W_t - \int_0^t u^*(X_s,s) \, \mathrm{d}s 
\end{equation}
is a Brownian motion with respect to $\widetilde{\Theta}$. Consequently, we have that
\begin{equation}
\label{eq:dPu}
\frac{\mathrm{d}\mathbb{P}^{u^*}}{\mathrm{d}\mathbb{P}}(X(\omega)) = \frac{\mathrm{d}\widetilde{\Theta}}{\mathrm{d}\Theta}(\omega) = \exp \left(Y_0 - \mathcal{W}(X(\omega))\right) = \frac{\mathrm{d}\mathbb{Q}}{\mathrm{d}\mathbb{P}}(X(\omega)), \qquad \omega \in \Omega, 
\end{equation}
using \eqref{eq:FBSDE} and \eqref{eq:equivalent quantities} in the last step. We note that similar arguments can be found in \cite{kebiri2017adaptive}, \cite[Section 3.3.1]{carmona2018probabilistic}.
    
For the proof of \eqref{thm: optimal control implies variance zero} we refer to \cite[Theorem 2]{hartmann2017variational}.  
The proof of the second statement is very similar to the argument presented for \eqref{it:measure}, resting primarily on \eqref{eq:Theta tilde} and \eqref{eq:dPu}, and is therefore omitted. 
    \end{proof}
    
\subsection{Algorithms and previous work}
\label{sec: previous work}
    
The numerical treatment of optimal control problems has been an active area of research for many decades and multiple perspectives on solving Problem \ref{prob:optimal control} have been developed. The monographs \cite{bertsekas2011dynamic} and \cite{kushner2013numerical} provide good overviews to \textit{policy iteration} and \textit{$Q$-learning}, strategies that have been further investigated in the machine learning literature and that are generally subsumed under the term \textit{reinforcement learning} \cite{powell2019reinforcement}. We also recommend \cite{kappen2007introduction} as an introduction to the specific setting considered in this paper. To cope with the key issue of high dimensionality, the authors of \cite{oster2019approximating} suggest solving a certain type of control problem in the framework of hierarchical tensor products. Another strategy of dealing with the curse of dimensionality is to first apply a model reduction technique and only then solve for the reduced model. Here, recent results on \textit{balanced truncation} for controlled linear S(P)DEs have for instance been suggested in \cite{becker2019feedback}, and approaches for systems with a slow-fast scale separation via the \textit{homogenisation} method can be found in \cite{zhang2014optimal}.
\par\bigskip
    
Solutions to Problem \ref{prob:HJB}, i.e. to HJB-PDEs of the type \eqref{eq:HJB}, can be approximated through finite difference or finite volume methods \cite{achdou2013finite,oberman2006convergent,peyrl2005numerical}. However, these approaches are usually not applicable in high-dimensional settings.
In contrast, the recently introduced \emph{Multilevel Picard} method \cite{hutzenthaler2016multilevel} based on a combination of the Feynman-Kac and Bismut-Elworthy-Li formulas has been proven to beat the curse of dimensionality in a variety of settings, see \cite{beck2019overcoming,hutzenthaler2019overcoming,hutzenthaler2018overcoming,hutzenthaler2019financial,hutzenthaler2020multilevel}. 
\par\bigskip

The FBSDE formulation (Problem \ref{prob:FBSDE}) has opened the door for Monte Carlo based methods that have been developed since the early 90s. We mention in particular \textit{least-squares Monte Carlo}, where $(Z_s)_{0 \le s \le T}$ is approximated iteratively backwards in time by solving a regression problem in each time step, along the lines of the dynamic programming principle \cite[Chapter 3]{pham2009continuous}. A good introduction can be found in \cite{gobet2016monte}; for extensive analysis on numerical errors we refer the reader to \cite{gobet2005regression, zhang2004numerical}. Recently, this approach has also been connected with deep learning, replacing Galerkin approximations by neural networks \cite{hure2019some}, as well as with the tensor train format, exploiting inherent low rank structures \cite{richter2021solving}. \par\bigskip
    
Another method leveraging the FBSDE perspective has been put forward in \cite{weinan2017deep,han2018solving} and further developed in \cite{beck2018solving,beck2019machine}. Here, the main idea is to enforce the terminal condition $Y_T = g(X_T)$ in \eqref{eq:backward SDE} by iteratively minimising the loss function
\begin{equation}
\label{moment_loss_first_mention}
\mathcal{L}(u,y_0) = \E\left[(Y_T(y_0,u) - g(X_T))^2\right],
\end{equation}
using a stochastic gradient descent IDO scheme. The notation $Y_T(y_0,u)$ indicates that the process in \eqref{eq:backward SDE} is to be simulated with given initial condition $y_0$ and control $u$ (these representing a priori guesses or current approximations, typically relying on neural networks), hence viewing \eqref{eq:backward SDE} as a forward process. Consequently, the approach thus described can be classified as a \emph{shooting method} for boundary value problems. We note that this idea allows treating rather general parabolic and elliptic PDEs \cite{grohs2018proof,hutzenthaler2019proof}, as well as -- with some modifications -- optimal stopping problems \cite{becker2019deep,becker2019solving}, going beyong the setting considered in this paper. Using neural network approximations in conjunction with FBSDE-based Monte-Carlo techniques holds the promise of alleviating the curse of dimensionality; understanding this phenomenon and proving rigorous mathematical  statements has been been the focus of intense current research    \cite{berner2018analysis,grohs2018proof,grohs2019deep,hutzenthaler2019proof,jentzen2018proof}. Let us also mention that similar algorithms have been suggested in \cite{raissi2018forward,raissi2019physics}, in particular  proposing to modify the loss function \eqref{moment_loss_first_mention} in order to encode the backward dynamics \eqref{eq:backward SDE}, and extensive investigation of optimal network design and choice of tunable parameters has been carried out \cite{chan2019machine}. Furthermore, we refer to \cite{carmona2019convergence1,carmona2019convergence2} for convergence results in the broader context of mean field control. In \cite[Section III.B]{hartmann2019variational} it has been proposed to modify the forward dynamics \eqref{eq:forward SDE} (and, to compensate, also the backward dynamics \eqref{eq:backward SDE}) by an additional control term. This idea is central for the main results of this paper, see Section \ref{sec:BSDEs}. Similar ideas for other types of PDEs have been proposed as well, see for instance \cite{eigel2019variational,raissi2019physics}.
\\\\
Conditioned diffusions (Problem \ref{prob:conditioning}) have been considered in a large deviation context \cite{dupuis2004importance} as well as in a variational setting \cite{hartmann2019variational,hartmann2017variational} motivated by free energy computations, building on earlier work in \cite{boue1998variational, dai1996connections}, see also \cite{baudoin2002conditioned,chetrite2015nonequilibrium,daipra91,  ferre2018adaptive}. The simulation of diffusion bridges has been studied in \cite{mider2019simulating} and conditioning via Doob's $h$-transform has been employed in a sequential Monte Carlo context \cite{heng2017controlled}. The formulation in Problem \ref{prob:conditioning} identifies the target measure $\mathbb{Q}$, motivating approaches that seek to minimise certain divergences on path space. This perspective will be developed in detail in Section \ref{sec:divergences}, building bridges to Problems \ref{prob:optimal control}, \ref{prob:HJB}, \ref{prob:FBSDE} and \ref{prob:variance minimisation}. Prior work following this direction includes \cite{bierkens2014explicit,gomez2014policy,hartmann2012efficient,kappen2012optimal,rawlik2013stochastic}, in particular relying on a connection between the $\KL$-divergence (or relative entropy) on path space and the cost functional \eqref{eq:cost functional}, see also Proposition \ref{prop:relative entropy}. A similar line of reasoning leads to the \textit{cross-entropy method} \cite{hartmann2017variational,kappen2016adaptive,rubinstein2013cross,zhang2014applications}, see Proposition \ref{prop:cross entropy} and equation \eqref{eq:CE estimator} in Section \ref{sec:ensemble averages}.   
\\\\
Problem \ref{prob:variance minimisation} motivates minimising the variance of importance sampling estimators. We refer the reader to \cite[Section 5.2]{muller2018neural} for a recent attempt based on neural networks, to \cite{deniz2019convergence} for a theoretical analysis of convergence rates, to \cite{hartmann2021nonasymptotic} for potential non-robustness issues, and to \cite{bugallo2017adaptive} for a general overview regarding adaptive importance sampling techniques. The relationship between optimal control and importance sampling (see Theorem \ref{thm:connections}) has been exploited by various authors to construct efficient samplers \cite{kappen2016adaptive, thijssen2015path}, in particular also with a view towards the sampling based estimation of hitting times, in which case optimal controls are governed by elliptic rather than parabolic PDEs \cite{hartmann2014characterization,hartmann2019variational,hartmann2012efficient,hartmann2018importance}. Similar sampling problems have been addressed in the context of sequential Monte Carlo \cite{del2000branching,heng2017controlled} and generative models \cite{tzen2019neural,tzen2019theoretical}. The latter works examine the potential of the controlled SDE \eqref{eq:controlled SDE} as a sampling device targeting a suitable distribution of the final state $X^u_T$. 
    
\section{Approximating probability measures on path space}
\label{sec: divergences section}
    
In this section we demonstrate that many of the algorithmic approaches encountered in the previous section can be recovered as minimisation procedures of certain divergences between probability measures on path space. Similar perspectives (mostly discussing the relative entropy and cross-entropy in Definition \ref{def_RE_and_CE_loss} below) can be found in the literature, see \cite{hartmann2012efficient,kappen2012optimal,zhang2014applications}. 
Recall from Section \ref{sec:change-of-path-measure} that we denote by $\mathcal{C}$ the space of $\mathbb{R}^d$-valued paths on the time interval $[0,T]$ with fixed initial point $x_{\mathrm{init}} \in \mathbb{R}^d$. As before, the probability measures on $\mathcal{C}$ induced by \eqref{eq:uncontrolled SDE} and \eqref{eq:controlled SDE} will be denoted by $\mathbb{P}$ and $\mathbb{P}^u$, respectively.
From now on, let us assume that there exists a unique optimal control with convenient regularity properties:
\begin{assumption}
\label{ass:u*}
The HJB-PDE \eqref{eq:HJB} admits a unique solution $V \in C_b^{2,1}(\mathbb{R}^d \times [0,T])$. We set
\begin{equation}
\label{eq:u_star}
u^* = - \sigma^\top \nabla V.
\end{equation}
\end{assumption}
For Assumption \ref{ass:u*} to be satisfied, it is sufficient to impose the regularity and boundedness conditions $b,\sigma,f \in C_b^{2,1}(\mathbb{R}^d)$ and $g \in C_b^{3}(\mathbb{R}^d)$, see\footnote{This result requires the boundedness of the controls in $\mathcal{U}$. However, applying \cite[Chapter II, Theorem 3.1] {kunita1984stochastic} to \eqref{eq:DV}, we see that $\nabla V$ is bounded and hence $\mathcal{U}$ can be restricted appropriately.} \cite[Theorem 4.2]{fleming2006controlled}. The strong boundedness assumption on $V$ could be weakened and for instance be replaced by the condition $\sigma^\top \nabla V \in \mathcal{U}$. For existence and uniqueness results involving unbounded controls we refer to \cite{fleming1969controlled}, and for specific examples to Sections \ref{section_LLQC} and \ref{sec: LQGC}.
In the sense made precise in Theorem \ref{thm:connections}, the control $u^*$ defined above provides solutions to the Problems \ref{prob:optimal control}-\ref{prob:variance minimisation} considered in Section \ref{sec: connections}. Moreover, there exists a corresponding optimal path measure $\mathbb{Q}$ (in the following also called the \emph{target measure}) defined in \eqref{eq:reweighted measure} and satisfying $\mathbb{Q} = \mathbb{P}^{u^*}$. We further note that Assumption \ref{ass:u*} together with the results from \cite[Chapter 11]{touzi2012optimal} imply that the solution to the FBSDE \eqref{eq:FBSDE} is unique.

\subsection{Divergences and loss functions}
\label{sec:divergences}
The SDE \eqref{eq:controlled SDE} establishes a measurable map $\mathcal{U} \ni u \mapsto \mathbb{P}^u \in \mathcal{P}(\mathcal{C})$ that can be made explicit in terms of Radon-Nikodym derivatives using Girsanov's theorem (see Lemma \ref{lem:Girsanov} in Appendix \ref{app:divergences}). 
Consequently, we can elevate divergences between path measures to loss functions on vector fields. To wit, let $D: \mathcal{P}(\mathcal{C})\times \mathcal{P}(\mathcal{C}) \rightarrow \mathbb{R}_{\ge 0}\ \cup \{+\infty\}$ be a divergence\footnote{The defining property of a divergence between probability measures is the equivalence between $D(\mathbb{P}_1 \vert \mathbb{P}_2) = 0$ and $\mathbb{P}_1 = \mathbb{P}_2$. Prominent examples include the $\KL$-divergence and, more generally, the $f$-divergences \cite{liese2006divergences}.}, where, as before, $\mathcal{P}(\mathcal{C})$ denotes the set of probability measures on $\mathcal{C}$. Then, setting
\begin{equation}
\mathcal{L}_D(u) = D(\mathbb{P}^u \vert \mathbb{Q}), \qquad u \in \mathcal{U},
\end{equation}
we immediately see that $\mathcal{L}_D \ge 0$, with Theorem \ref{thm:connections} implying that $\mathcal{L}_D(u) = 0$ if and only if $u = u^*$. Consequently, an approximation of the optimal control vector field $u^*$ can in principle be found by minimising the loss $\mathcal{L}_D$. In the remainder of the paper, we will suggest possible losses and study some of their properties. \par\bigskip

Starting with the $\KL$-divergence, we introduce the \emph{relative entropy loss} and the \emph{cross-entropy loss}, corresponding to the divergences
\begin{equation}
D^{\RE}(\mathbb{P}_1 \vert \mathbb{P}_2) = \KL(\mathbb{P}_1 \vert \mathbb{P}_2) \qquad \text{and} \qquad D^{\CE}(\mathbb{P}_1 \vert \mathbb{P}_2) = \KL(\mathbb{P}_2 \vert \mathbb{P}_1).
\end{equation}
    
\begin{definition}[Relative entropy and cross-entropy losses]
\label{def_RE_and_CE_loss}
The \emph{relative entropy loss}
is given by
\begin{equation}
\label{eq:relative entropy}
\mathcal{L}_{\RE}(u) = {\E}_{\P^u}\left[\log \frac{\mathrm d \P^u}{\mathrm d \Q} \right], \qquad  u \in \mathcal{U},
\end{equation}
and the \emph{cross-entropy loss} by
\begin{equation}
\label{eq:cross entropy}
\mathcal{L}_{\CE}(u) = {\E}_{\Q}\left[\log \frac{\mathrm d \Q}{\mathrm d \P^u} \right], \qquad u \in \mathcal{U},
\end{equation}
where the target measure $\mathbb{Q}$ has been defined in \eqref{eq:reweighted measure}.
\end{definition}
\begin{remark}[Notation]
\label{rem:notation}
Note that, by definition, the expectations in \eqref{eq:relative entropy} and \eqref{eq:cross entropy} are understood as integrals on $\mathcal{C}$, i.e.
\begin{equation}
\mathcal{L}_{\RE}(u) = \int_{\mathcal{C}}\left(\log \frac{\mathrm d \P^u}{\mathrm d \Q} \right) \mathrm{d}\mathbb{P}^u, \qquad \mathcal{L}_{\CE}(u) = \int_{\mathcal{C}}\left(\log \frac{\mathrm d \Q}{\mathrm d \P^u} \right) \mathrm{d}\mathbb{Q}.  
\end{equation}
In contrast, the expectation operator $\mathbb{E}$ (without subscript, as used in \eqref{eq:cost functional} and \eqref{eq:importance sampling}, for instance) throughout denotes integrals on the underlying abstract probability space $(\Omega, \mathcal{F},(\mathcal{F}_t)_{t \ge 0}, \Theta)$.
\end{remark}

For $\widetilde{\mathbb{P}} \in \mathcal{P}(\mathcal{C})$, it is straightforward to verify that 
\begin{equation}
\label{eq:var divergence}
D^{\mathrm{Var}}_{\widetilde{\mathbb{P}}}(\mathbb{P}_1 \vert \mathbb{P}_2) = 
\begin{cases}
{\Var}_{\widetilde{\mathbb{P}}} \left( \frac{\mathrm{d}\mathbb{P}_2}{\mathrm{d}\mathbb{P}_1}\right), \quad & \text{if } \mathbb{P}_1 \sim \mathbb{P}_2 \quad\text{and}\quad \mathbb{E}_{\widetilde{\mathbb{P}}}\left[\left| \frac{\mathrm{d}\mathbb{P}_2}{\mathrm{d}\mathbb{P}_1}\right| \right] < \infty,\\
+ \infty, \qquad &\text{otherwise,} 
\end{cases}
\end{equation}
and 
\begin{equation}
\label{eq:var log divergence}
D^{\mathrm{Var(log)}}_{\widetilde{\mathbb{P}}}(\mathbb{P}_1 \vert \mathbb{P}_2) = 
\begin{cases}
{\Var}_{\widetilde{\mathbb{P}}} \left( \log \frac{\mathrm{d}\mathbb{P}_2}{\mathrm{d}\mathbb{P}_1}\right), \quad & \text{if } \mathbb{P}_1 \sim \mathbb{P}_2 \quad\text{and}\quad \mathbb{E}_{\widetilde{\mathbb{P}}}\left[\left| \log \frac{\mathrm{d}\mathbb{P}_2}{\mathrm{d}\mathbb{P}_1}\right| \right] < \infty,\\
+ \infty, \qquad &\text{otherwise,}
\end{cases}
\end{equation}
define divergences on the set of probability measures equivalent to $\widetilde{\mathbb{P}}$. Henceforth, these quantities shall be called \emph{variance divergence} and \emph{log-variance divergence}, respectively.
\begin{remark}
Setting $\widetilde{\mathbb{P}} = \mathbb{P}_1$, the quantity $D^{\mathrm{Var}}_{\mathbb{P}_1}(\mathbb{P}_1 \vert \mathbb{P}_2)$ coincides with the Pearson $\chi^2$-divergence \cite{dieng2017variational,liese2006divergences} measuring the importance sampling relative error \cite{deniz2019convergence,hartmann2021nonasymptotic}, hence relating to Problem \ref{prob:variance minimisation}.  The divergence $D^{\mathrm{Var(log)}}_{\widetilde{\mathbb{P}}}$ seems to be new; it is motivated by its connections to the forward-backward SDE formulation of optimal control (see Problem  \ref{prob:FBSDE}), as will be explained in Section \ref{sec:BSDEs}. Let us already mention that inserting the $\log$ in \eqref{eq:var divergence} to obtain \eqref{eq:var log divergence} has the potential benefit of making sample based estimation more robust in high dimensions (see Section \ref{sec:products}). Furthermore, we point the reader to Proposition \ref{prop:log var and KL} revealing close connections between $D^{\mathrm{Var(log)}}_{\widetilde{\mathbb{P}}}$ and the relative entropy.
\end{remark}
Using \eqref{eq:var divergence} and \eqref{eq:var log divergence} with $\widetilde{\mathbb{P}} = \mathbb{P}^v$,  
we obtain two additional families of losses, indexed by $v  \in \mathcal{U}$:
    
\begin{definition}[Variance and log-variance losses]
\label{def_var_loss}
For $v  \in \mathcal{U}$, the \emph{variance loss} is given by 
\begin{equation}
\label{def_variance_loss}
\mathcal{L}_{\text{Var}_v}(u) = {\Var}_{\P^v}\left( \frac{\mathrm d \Q}{\mathrm d \P^u} \right), \qquad  u \in \mathcal{U},
\end{equation}
and the \emph{log-variance loss} by
\begin{equation}
\label{def_log_variance_loss}
\mathcal{L}^{\log}_{\text{Var}_v}(u) = {\Var}_{\P^v}\left( \log\frac{ \mathrm d \Q}{\mathrm d \P^u} \right), \qquad  u \in \mathcal{U},
\end{equation}
whenever $\mathbb{E}_{{\mathbb{P}^v}}\left[\left| \frac{\mathrm{d}\mathbb{Q}}{\mathrm{d}\mathbb{P}^u}\right| \right] < \infty$ or $\mathbb{E}_{{\mathbb{P}^v}}\left[\left| \log \frac{\mathrm{d}\mathbb{Q}}{\mathrm{d}\mathbb{P}^u}\right| \right] < \infty$, respectively\footnote{These integrability conditions can readily be checked using the formulas provided in Proposition \ref{prop:variance} below.}. 
The notation ${\Var}_{\P^v}$ is to be interpreted in line with Remark \ref{rem:notation}. 
\end{definition}

By direct computations invoking Girsanov's theorem, the losses defined above admit explicit representations in terms of solutions to SDEs of the form \eqref{eq:uncontrolled SDE} and \eqref{eq:controlled SDE}. Crucially, the propositions that follow replace the expectations on $\mathcal{C}$ used in the definitions \eqref{eq:relative entropy}, \eqref{eq:cross entropy}, \eqref{eq:var divergence} and \eqref{eq:var log divergence} by expectations on $\Omega$ that are more amenable to direct probabilistic interpretation and Monte Carlo simulation (see also Remark \ref{rem:notation}). Recall that the target measure $\mathbb{Q}$ is assumed to be of the type \eqref{eq:reweighted measure}, where $\mathcal{W}$ has been defined in \eqref{eq:work}. We start with the relative entropy loss:
    
\begin{proposition}[Relative entropy loss]
\label{prop:relative entropy}
For $u \in \mathcal{U}$, let $(X_s^u)_{0 \le s \le T}$ denote the unique strong solution to \eqref{eq:controlled SDE}.
Then 
\begin{equation}
\label{eq:RE explicit}
\mathcal{L}_{\mathrm{RE}}(u) = \mathbb{E} \left[ \frac{1}{2} \int_0^T \vert u(X_s^u, s) \vert^2 \, \mathrm{d}s + \int_0^T f(X_s^u, s)\, \mathrm ds + g(X_T^u) \right] + \log \mathcal{Z}.
\end{equation}
\end{proposition}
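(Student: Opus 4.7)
The plan is to unpack the logarithm $\log \tfrac{\mathrm{d}\mathbb{P}^u}{\mathrm{d}\mathbb{Q}}$ via the chain rule into two pieces corresponding to the Girsanov factor $\tfrac{\mathrm{d}\mathbb{P}^u}{\mathrm{d}\mathbb{P}}$ and the tilting factor $\tfrac{\mathrm{d}\mathbb{P}}{\mathrm{d}\mathbb{Q}}$, take expectations under $\mathbb{P}^u$, and simplify.

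More concretely, I would first invoke the chain rule for Radon-Nikodym derivatives to write
\begin{equation}
\log \frac{\mathrm{d}\mathbb{P}^u}{\mathrm{d}\mathbb{Q}} = \log \frac{\mathrm{d}\mathbb{P}^u}{\mathrm{d}\mathbb{P}} + \log \frac{\mathrm{d}\mathbb{P}}{\mathrm{d}\mathbb{Q}}.
\end{equation}
The second term is immediate from the definition \eqref{eq:reweighted measure} of $\mathbb{Q}$: evaluating along a path of $X^u$ gives $\log \tfrac{\mathrm{d}\mathbb{P}}{\mathrm{d}\mathbb{Q}}(X^u) = \log \mathcal{Z} + \mathcal{W}(X^u)$, and integrating this against $\mathbb{P}^u$ yields $\log \mathcal{Z} + \mathbb{E}\left[\int_0^T f(X^u_s,s)\,\mathrm{d}s + g(X^u_T)\right]$ after unpacking the definition \eqref{eq:work} of $\mathcal{W}$.

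For the first term, I would apply Girsanov's theorem (as in \eqref{eq:Girsanov IS}, but in reciprocal form): under $\mathbb{P}^u$, the process $W_s$ appearing in \eqref{eq:controlled SDE} is a Brownian motion, and
\begin{equation}
\log \frac{\mathrm{d}\mathbb{P}^u}{\mathrm{d}\mathbb{P}}(X^u) = \int_0^T u(X^u_s, s)\cdot \mathrm{d}W_s + \frac{1}{2}\int_0^T |u(X^u_s,s)|^2\,\mathrm{d}s.
\end{equation}
Taking $\mathbb{E}_{\mathbb{P}^u}$ amounts to computing $\mathbb{E}$ along the path $X^u$ driven by $W$; the stochastic integral is a genuine martingale (not merely local) because $u \in \mathcal{U}$ grows at most linearly and $X^u$ has finite second moments on $[0,T]$ by Assumption \ref{ass:SDE coefficients}, which gives $\mathbb{E}\int_0^T |u(X^u_s,s)|^2\,\mathrm{d}s < \infty$. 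Hence the stochastic integral has zero expectation, leaving precisely $\tfrac{1}{2}\mathbb{E}\int_0^T |u(X^u_s,s)|^2\,\mathrm{d}s$. Combining the two pieces yields \eqref{eq:RE explicit}.

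The only genuinely delicate point is the martingale property of the stochastic integral under $\mathbb{P}^u$ (equivalently, verifying that the Girsanov exponential is a true martingale so that $\mathbb{P}^u$ is well-defined). This is standard given the linear-growth condition in Assumption \ref{ass:SDE coefficients} and the definition \eqref{eq:control set} of $\mathcal{U}$, but is worth a short remark; the rest of the proof is essentially bookkeeping using the chain rule and the definitions.
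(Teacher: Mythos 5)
Your proposal is correct and follows essentially the same route as the paper's proof in Appendix A.1: factor $\tfrac{\mathrm{d}\mathbb{P}^u}{\mathrm{d}\mathbb{Q}} = \tfrac{\mathrm{d}\mathbb{P}^u}{\mathrm{d}\mathbb{P}}\tfrac{\mathrm{d}\mathbb{P}}{\mathrm{d}\mathbb{Q}}$, apply Girsanov to the first factor and the definition \eqref{eq:reweighted measure} to the second, then kill the stochastic integral in expectation. Your explicit remark on why the stochastic integral is a true martingale (linear growth of $u\in\mathcal{U}$ plus finite moments of $X^u$) is a worthwhile addition; the paper leaves this implicit.
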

\begin{proof}
See \cite{hartmann2012efficient,kappen2012optimal}. For the reader's convenience, we provide a self-contained proof in  
Appendix \ref{app:divergences}.
\end{proof}
\begin{remark}
Up to the constant $\log \mathcal{Z}$, the loss $\mathcal{L}_{\RE}$ coincides with the cost functional \eqref{eq:cost functional} associated to the optimal control formulation in Problem \ref{prob:optimal control}. The approach of minimising the $\KL$-divergence between $\mathbb{P}^u$ and $\mathbb{Q}$ as defined in \eqref{eq:relative entropy} is thus directly linked to the perspective outlined in Section \ref{sec:optimal control}. We refer to \cite{hartmann2012efficient,kappen2012optimal} for further details.
\end{remark}
The cross-entropy loss admits a family of representations, indexed by $v \in \mathcal{U}$:
\begin{proposition}[Cross-entropy loss]
\label{prop:cross entropy}
For $v \in \mathcal{U}$, let $(X_s^v)_{0 \le s \le T}$ denote the unique strong solution to \eqref{eq:controlled SDE}, with $u$ replaced by $v$.
Then there exists a constant $C \in \mathbb{R}$ (not depending on $u$ in the next line) such that
\begin{subequations}
\label{CE_formula}
\begin{align}
\mathcal{L}_{\CE}(u) = \frac{1}{\mathcal{Z}} \mathbb{E} \Bigg[ & \left( \frac{1}{2} \int_0^T \vert u(X^v_s, s) \vert^2 \,\mathrm{d}s - \int_0^T (u \cdot v)(X_s^v, s) \, \mathrm{d}s - \int_0^T u(X_s^v, s) \cdot \mathrm{d}W_s
\right) \\
\label{eq:CE2}
& \exp \left(- \int_0^T v(X_s^v, s) \cdot \mathrm{d}W_s - \frac{1}{2} \int_0^T \vert v(X_s^v, s) \vert^2 \, \mathrm{d}s - \mathcal{W}(X^v) \right) \Bigg] + C,
\end{align}
\end{subequations}
for all $u \in \mathcal{U}$.
\end{proposition}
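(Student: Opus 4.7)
The plan is to reduce the path-space expectation $\mathcal{L}_{\CE}(u) = \mathbb{E}_{\mathbb{Q}}[\log(\mathrm{d}\mathbb{Q}/\mathrm{d}\mathbb{P}^u)]$ to an expectation on the abstract probability space $(\Omega, \Theta)$ with respect to the law of $X^v$, by iterating Girsanov's theorem and unpacking the inner logarithm via the Radon-Nikodym chain rule.

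First I would rewrite the expectation against $\mathbb{Q}$ as one against $\mathbb{P}^v$, using a two-step change of measure,
$$\mathcal{L}_{\CE}(u) = \mathbb{E}_{\mathbb{P}}\!\left[\log \frac{\mathrm{d}\mathbb{Q}}{\mathrm{d}\mathbb{P}^u} \cdot \frac{\mathrm{d}\mathbb{Q}}{\mathrm{d}\mathbb{P}}\right] = \mathcal{Z}^{-1} \mathbb{E}_{\mathbb{P}^v}\!\left[e^{-\mathcal{W}} \log \frac{\mathrm{d}\mathbb{Q}}{\mathrm{d}\mathbb{P}^u} \cdot \frac{\mathrm{d}\mathbb{P}}{\mathrm{d}\mathbb{P}^v}\right],$$
and then transport this expectation to $(\Omega,\Theta)$ by evaluating the integrand along $X^v$, in exactly the manner used in the proof of Theorem \ref{thm:connections} (compare \eqref{eq:Theta tilde}--\eqref{eq:dPu}). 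The factor $\mathrm{d}\mathbb{P}/\mathrm{d}\mathbb{P}^v$ evaluated at $X^v$ then produces the Girsanov exponential visible in \eqref{eq:CE2}.

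For the logarithm inside, I would decompose
$$\log \frac{\mathrm{d}\mathbb{Q}}{\mathrm{d}\mathbb{P}^u} = \log \frac{\mathrm{d}\mathbb{Q}}{\mathrm{d}\mathbb{P}} + \log \frac{\mathrm{d}\mathbb{P}}{\mathrm{d}\mathbb{P}^u} = -\mathcal{W} - \log \mathcal{Z} + \log \frac{\mathrm{d}\mathbb{P}}{\mathrm{d}\mathbb{P}^u},$$
and apply the path-space Girsanov formula (Lemma \ref{lem:Girsanov}) to the last term. The key observation is that, because $X^v$ satisfies $\mathrm{d}X^v_s - b(X^v_s,s)\mathrm{d}s = \sigma(X^v_s,s)(v(X^v_s,s)\mathrm{d}s + \mathrm{d}W_s)$, evaluating the path-space Girsanov density at $X^v$ turns the pathwise stochastic integral into $\int_0^T u(X^v_s,s)\cdot v(X^v_s,s)\,\mathrm{d}s + \int_0^T u(X^v_s,s)\cdot \mathrm{d}W_s$ on $(\Omega,\Theta)$. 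Combining this with the $\tfrac{1}{2}\int_0^T |u(X^v_s,s)|^2\mathrm{d}s$ term produced by $\log(\mathrm{d}\mathbb{P}/\mathrm{d}\mathbb{P}^u)$ yields exactly the three $u$-dependent summands inside the parentheses of \eqref{CE_formula}. The remaining $u$-independent contribution $-\mathcal{W}(X^v) - \log \mathcal{Z}$, once multiplied by the common factor $e^{-\mathcal{W}(X^v)}\cdot\exp(\ldots)$ and integrated, becomes the additive constant $C$.

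The main obstacle is the careful bookkeeping between the path-space density $\mathrm{d}\mathbb{P}^u/\mathrm{d}\mathbb{P}$, which is a measurable function on $\mathcal{C}$, and the abstract stochastic integral on $\Omega$ driven by $W$. One must justify that the pathwise Itô integral defining $\log(\mathrm{d}\mathbb{P}^u/\mathrm{d}\mathbb{P})$, when evaluated at the $\mathbb{P}^v$-distributed process $X^v$, equals the $\Theta$-almost sure identity written above; this is a standard but subtle consequence of Girsanov together with the invariance of Itô integrals under equivalent changes of measure, and it parallels the manipulation already carried out inside the proof of Theorem \ref{thm:connections}. Once that identification is in place, what remains is purely algebraic rearrangement separating $u$-dependent from $u$-independent terms.
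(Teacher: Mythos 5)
Your proposal is correct and follows essentially the same route as the paper's proof: change measure from $\mathbb{Q}$ to $\mathbb{P}^v$ via the chain rule $\frac{\mathrm{d}\mathbb{Q}}{\mathrm{d}\mathbb{P}^v} = \frac{\mathrm{d}\mathbb{Q}}{\mathrm{d}\mathbb{P}}\frac{\mathrm{d}\mathbb{P}}{\mathrm{d}\mathbb{P}^v}$, split $\log\frac{\mathrm{d}\mathbb{Q}}{\mathrm{d}\mathbb{P}^u} = \log\frac{\mathrm{d}\mathbb{Q}}{\mathrm{d}\mathbb{P}} + \log\frac{\mathrm{d}\mathbb{P}}{\mathrm{d}\mathbb{P}^u}$, and apply Lemma \ref{lem:Girsanov} together with the substitution $\mathrm{d}X^v_s = (b + \sigma v)\mathrm{d}s + \sigma\,\mathrm{d}W_s$ to convert the pathwise densities into the stated stochastic integrals, collecting the $u$-independent $-\mathcal{W}(X^v) - \log\mathcal{Z}$ contribution into $C$. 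The only (welcome) difference is that you explicitly flag the measurability bookkeeping between path-space densities and $\Omega$-valued Itô integrals, which the paper treats implicitly.
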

\begin{proof}
See \cite{zhang2014applications} or Appendix \ref{app:divergences} for a self-contained proof.
\end{proof}
\begin{remark}
\label{remark_CE_additional_v}
The appearance of the exponential term in \eqref{eq:CE2} can be traced back to the reweighting\footnote{Note that, by slightly abusing notation, here and in the following $\P$ often denotes an arbitrary (path) measure and does not necessarily relate to the uncontrolled dynamics \eqref{eq:uncontrolled SDE}.}
\begin{equation}
D^{\CE}(\P \vert \Q) = \mathbb{E}_{\Q} \left[ \log \left( \frac{\mathrm{d}\Q}{\mathrm{d}\P} \right) \right] = \mathbb{E}_{\P^v} \left[ \log \left( \frac{\mathrm{d}\Q}{\mathrm{d}\P} \right) \frac{\mathrm{d}\Q}{\mathrm{d}\P^v}\right],
\end{equation}
recalling that $\P^v$ denotes the path measure associated to \eqref{eq:controlled SDE} controlled by $v$.
While the choice of $v$ evidently does not affect the loss function, judicious tuning may have a significant impact on the numerical performance by means of altering the statistical error for the associated estimators (see Section \ref{sec:ensemble averages}). We note that the expression \eqref{eq:RE explicit} for the relative entropy loss can similarly be augmented by an additional control $v \in \mathcal{U}$. However, Proposition \ref{prop:robustness} in Section \ref{sec:products} discourages this approach and our numerical experiments using a reweighting for the relative entropy loss have not been promising. In general, we feel that exponential terms of the form appearing in \eqref{eq:CE2} often have a detrimental effect on the variance of estimators, which should also be compared to an analysis in \cite{hartmann2021nonasymptotic}. Therefore, an important feature of both the relative entropy loss and the log-variance loss (see Proposition \ref{prop:variance}) seems to be that expectations can be taken with respect to controlled processes $(X_s^v)_{0 \le s \le T}$ without incurring exponential factors as in \eqref{eq:CE2}.
\end{remark}
    
\begin{remark}
Setting $v = 0$ leads to the simplification 
\begin{equation}
\mathcal{L}_{\CE}(u) = \frac{1}{\mathcal{Z}} \mathbb{E} \Bigg[ \left( \frac{1}{2} \int_0^T \vert u(X_s, s) \vert^2 \, \mathrm ds  - \int_0^T u(X_s, s) \cdot \mathrm{d}W_s
\right) \exp(-\mathcal{W}(X)) \Bigg] + C,
\end{equation}
where $(X_s)_{0 \le s \le T}$ solves the uncontrolled SDE \eqref{eq:uncontrolled SDE}. The quadratic dependence of  $\mathcal{L}_{\CE}$ on $u$ has been exploited in \cite{zhang2014applications} to construct efficient Galerkin-type approximations of $u^*$.
\end{remark}
Finally, we derive corresponding representations for the variance and log-variance losses:
\begin{proposition}[Variance-type losses]
\label{prop:variance}
For $v \in \mathcal{U}$, let $(X_s^v)_{0 \le s \le T}$ denote the unique strong solution to \eqref{eq:controlled SDE}, with $u$ replaced by $v$.
Furthermore, define
\begin{equation}
\label{def_Y_T}
\widetilde{Y}_T^{u,v} = - \int_0^T (u \cdot v)(X_s^v, s)\, \mathrm{d}s - \int_0^T f(X_s^v, s)\,\mathrm ds - \int_0^T u(X_s^v, s) \cdot \mathrm{d}W_s + \frac{1}{2} \int_0^T |u(X_s^v, s)|^2\,\mathrm{d}s.
\end{equation}
Then
\begin{equation}
\label{eq:var explicit}
\mathcal{L}_{\mathrm{Var}_v}(u) = \frac{1}{\mathcal{Z}^2} \,{\Var} \left(e^{\widetilde{Y}_T^{u, v} - g(X_T^v)}\right),
\end{equation}
and 
\begin{equation}
\label{eq:log var explicit}
\mathcal{L}^{\log}_{\Var_v}(u) = {\Var}\left(\widetilde{Y}_T^{u,v} - g(X_T^v)\right),
\end{equation}
for all $u \in \mathcal{U}$.
\end{proposition}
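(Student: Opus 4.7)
The plan is to express the Radon-Nikodym derivative $\frac{\mathrm{d}\mathbb{Q}}{\mathrm{d}\mathbb{P}^u}$ in explicit pathwise form, evaluate it along the sample path $(X_s^v)_{0 \le s \le T}$, and then take the variance (or its logarithmic version). By the chain rule,
\begin{equation*}
\frac{\mathrm{d}\mathbb{Q}}{\mathrm{d}\mathbb{P}^u} = \frac{\mathrm{d}\mathbb{Q}}{\mathrm{d}\mathbb{P}} \cdot \frac{\mathrm{d}\mathbb{P}}{\mathrm{d}\mathbb{P}^u},
\end{equation*}
where the first factor equals $e^{-\mathcal{W}}/\mathcal{Z}$ by \eqref{eq:reweighted measure}, and the second factor is a stochastic exponential supplied by Girsanov's theorem (cf. Lemma \ref{lem:Girsanov}).

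The key step is to rewrite the Girsanov exponent in the coordinates natural to $X^v$. Since $\mathrm d X_s^v = (b + \sigma v)(X_s^v,s)\,\mathrm ds + \sigma(X_s^v,s)\,\mathrm d W_s$, the driver $W^{(0)}$ that makes the path $X^v$ look uncontrolled satisfies $\mathrm d W^{(0)}_s = v(X^v_s,s)\,\mathrm ds + \mathrm d W_s$. Substituting this into the Girsanov formula for $\log \frac{\mathrm d \P}{\mathrm d \P^u}$ produces
\begin{equation*}
\log \frac{\mathrm d \P}{\mathrm d \P^u}(X^v) = -\int_0^T (u\cdot v)(X^v_s,s)\,\mathrm ds - \int_0^T u(X^v_s,s)\cdot \mathrm d W_s + \frac{1}{2}\int_0^T |u(X^v_s,s)|^2\,\mathrm ds,
\end{equation*}
while $\log \frac{\mathrm d \Q}{\mathrm d \P}(X^v) = -\int_0^T f(X^v_s,s)\,\mathrm ds - g(X^v_T) - \log \mathcal{Z}$ follows directly from \eqref{eq:reweighted measure} and \eqref{eq:work}. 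Adding these two identities and matching terms against \eqref{def_Y_T} yields the compact formula
\begin{equation*}
\log \frac{\mathrm d \Q}{\mathrm d \P^u}(X^v) = \widetilde Y_T^{u,v} - g(X_T^v) - \log \mathcal{Z}.
\end{equation*}

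The log-variance identity \eqref{eq:log var explicit} then follows immediately, since $\log \mathcal{Z}$ is deterministic and drops out under variance. For \eqref{eq:var explicit}, I would exponentiate the above to obtain $\frac{\mathrm d \Q}{\mathrm d \P^u}(X^v) = \mathcal{Z}^{-1}\, e^{\widetilde Y_T^{u,v} - g(X_T^v)}$ and then invoke $\mathrm{Var}(c\,\xi) = c^2\,\mathrm{Var}(\xi)$. The main obstacle is essentially bookkeeping: Girsanov's formula is most naturally phrased against the driving Brownian motion of the uncontrolled process, and the substitution $\mathrm d W^{(0)} = v(X^v_s,s)\,\mathrm ds + \mathrm d W_s$ must be handled carefully to surface the cross-term $\int_0^T (u\cdot v)\,\mathrm ds$ in \eqref{def_Y_T}. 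Beyond this, one must verify that the relevant stochastic exponential is a true martingale so Girsanov applies in the first place; this is ensured by Assumption \ref{ass:SDE coefficients} together with the linear-growth restriction built into $\mathcal{U}$ via \eqref{eq:control set}, exactly as in the proof of Theorem \ref{thm:connections}.
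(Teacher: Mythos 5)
Your proof is correct and follows essentially the same route as the paper's: factor $\frac{\mathrm{d}\mathbb{Q}}{\mathrm{d}\mathbb{P}^u} = \frac{\mathrm{d}\mathbb{Q}}{\mathrm{d}\mathbb{P}}\cdot\frac{\mathrm{d}\mathbb{P}}{\mathrm{d}\mathbb{P}^u}$, invoke Girsanov to identify the log of the product evaluated at $X^v$ with $\widetilde{Y}_T^{u,v} - g(X_T^v) - \log\mathcal{Z}$, and conclude by translation-invariance (resp. scaling) of the variance. You spell out the change-of-driver substitution $\mathrm{d}W^{(0)}_s = v(X_s^v,s)\,\mathrm{d}s + \mathrm{d}W_s$ that the paper leaves implicit (via Lemma \ref{lem:Girsanov}), but the underlying argument is identical.
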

\begin{proof}
See Appendix \ref{app:divergences}.
\end{proof}
Setting $v=u$ in \eqref{eq:var explicit} recovers the importance sampling objective in \eqref{eq:importance sampling}, i.e. the variance divergence $D^{\Var}_{\P^u}$ encodes the formulation from Problem \ref{prob:variance minimisation}. See also \cite{muller2018neural,hartmann2021nonasymptotic}.

\begin{remark}
\label{remark_var_loss_additional_v}
While different choices of $v$ merely lead to distinct representations for the cross-entropy loss $\mathcal{L}_{\CE}$ according to Proposition \ref{prop:cross entropy} and Remark \ref{remark_CE_additional_v}, the variance losses $\mathcal{L}_{\mathrm{Var}_v}$ and $\mathcal{L}^{\log}_{\Var_v}$ do indeed depend on $v$. However, the property $\mathcal{L}_{\mathrm{Var}_v}(u) = 0 \iff u = u^*$ (and similarly for $\mathcal{L}^{\log}_{\Var_v}$) holds for all $v \in \mathcal{U}$, by construction.
\end{remark}
\subsection{FBSDEs and the log-variance loss}
\label{sec:BSDEs}
    
As it turns out, the log-variance loss
$\mathcal{L}^{\log}_{\Var_v}$ as computed in \eqref{eq:log var explicit} is intimately connected to the FBSDE formulation in Problem \ref{prob:FBSDE} (and we already used the notation $\widetilde{Y}_T^{u,v}$ in hindsight). Indeed, setting $v = 0$ in Proposition \ref{prop:variance} and writing 
\begin{equation}
\label{eq:var loss 0}
{\Var}\left(\widetilde{Y}_T^{u,0} - g(X_T^0)\right) = {\Var}\Big(\underbrace{\widetilde{Y}_T^{u,0} + y_0}_{=:Y^{u,0}_{T}} - g(X_T^0)\Big), 
\end{equation}
for some (at this point, arbitrary) constant $y_0 \in \mathbb{R}$, we recover the forward SDE \eqref{eq:forward SDE} from \eqref{eq:uncontrolled SDE} and the backward SDE \eqref{eq:backward SDE} from \eqref{def_Y_T} in conjunction with the optimality condition $\mathcal{L}^{\log}_{\Var_v}(u) = 0$, using also the identification $u^*(X_s,s) =: -Z_s$ suggested by \eqref{eq:YZ in terms of V}.
For arbitrary $v \in \mathcal{U}$, we similarly obtain the generalised FBSDE system
\begin{subequations}
\label{eq:general FBSDE}
\begin{alignat}{2}
\label{eq:general FBSDE forward}
\mathrm d X^v_s & = \left(b(X^v_s,s) + \sigma(X^v_s,s) v(X^v_s,s)\right) \mathrm ds + \sigma(X^v_s,s) \, \mathrm dW_s, \qquad \qquad \qquad \qquad X^v_0 && = x_0, \\
\label{eq:general FBSDE backward}
\mathrm{d}Y_s^{u^{*},v} & = -f(X^v_s,s) \, \mathrm{d}s + v(X^v_s,s) \cdot Z_s \, \mathrm{d}s + \frac{1}{2} \vert Z_s \vert^2 \, \mathrm{d}s + Z_s \cdot \mathrm{d}W_s, \qquad \qquad \,\,\,\, Y_T^{u^{*}, v} && = g(X^v_T), 
\end{alignat}
\end{subequations}
again setting 
\begin{equation}
\label{eq:shift}
Y_T^{u,v} = \widetilde{Y}_T^{u,v} + y_0.
\end{equation}
In this sense, the divergence $D^{\Var(\log)}_{\P^v}(\P^u|\Q)$ encodes the dynamics \eqref{eq:general FBSDE}. 
Let us again insist on the fact that by construction the solution $(Y_s,Z_s)_{0 \le s \le T}$ to \eqref{eq:general FBSDE} does not depend on $v \in \mathcal{U}$ (the contribution $\sigma(X^v_s,s) v(X^v_s,s) \, \mathrm ds$ in \eqref{eq:general FBSDE forward} being compensated for by the term $v(X^v_s,s) \cdot Z_s \, \mathrm{d}s$ in \eqref{eq:general FBSDE backward}), whereas clearly $(X_s^v)_{0 \le s \le T}$ does. When $u^*(X_s,s)=-Z_s$ is approximated in an iterative manner (see Section \ref{sec:computational_aspects}), the choice $v = u$ is natural as it amounts to applying the currently obtained estimate for the optimal control to the forward process \eqref{eq:general FBSDE forward}.  In this context, the system \eqref{eq:general FBSDE} was put forward in  \cite[Section III.B]{hartmann2019variational}. The bearings of appropriate choices for $v$ will be further discussed in Section \ref{sec:finite sample properties}.
    
It is instructive to compare the expression \eqref{eq:var loss 0} for the log-variance loss to the `moment loss'
\begin{equation}
\label{eq:moment}
\mathcal{L}_{\mathrm{moment}} (u,y_0) = \mathbb{E} \left[ \left( Y_T^{u,0}(y_0) - g(X_T^0) \right)^2 \right]
\end{equation}
suggested in 
\cite{weinan2017deep,han2018solving} in the context of solving more general nonlinear parabolic PDEs\footnote{We have employed the notation $Y_T^{u,0}(y_0)$ in order to stress the dependence on $y_0$ through \eqref{eq:shift}.}. More generally, we can define
\begin{equation}
\label{eq:moment_v}
\mathcal{L}_{\mathrm{moment}_v} (u,y_0) = \mathbb{E} \left[ \Big( Y_T^{u,v}(y_0) - g(X_T^v) \Big)^2 \right]
\end{equation}
as a counterpart to the expression \eqref{eq:log var explicit}.
Note that unlike the losses considered so far, the moment losses depend on the additional parameter $y_0 \in \mathbb{R}$, which has implications in numerical implementations. Also, these losses do not admit a straightforward interpretation in terms of divergences between path measures. As we show in Proposition \ref{prop: equivalence moment log-variance}, algorithms based on $\mathcal{L}_{\mathrm{moment}_v}$ are in fact equivalent to their counterparts based on $\mathcal{L}^{\log}_{\Var_v}$ in the limit of infinite batch size when $y_0$ is chosen optimally or when the forward process is controlled in a certain way. We already anticipate that optimising an additional parameter $y_0$ can slow down convergence towards the solution $u^*$ considerably (see Section \ref{sec:numerics}).

\begin{remark}
\label{rem:shift V}
Reversing the argument, the log-variance loss can be obtained from \eqref{eq:moment} by replacing the second moment by the variance and using the translation invariance \eqref{eq:var loss 0} to remove the dependence on $y_0$. The fact that this procedure leads to a viable loss function (i.e. satisfying $\mathcal{L}(u)=0 \iff u=u^*$) can be traced back to the fact that the Hamilton-Jacobi PDE \eqref{eq:HJB 1st} is itself translation invariant (i.e. it remains unchanged under the transformation $V \mapsto V + \mathrm{const}$). 
Following this argument, the log-variance loss can be applied for solving more general PDEs of the form \eqref{eq:nonlinear PDE} in the case when $h$ depends on $V$ only through $\nabla V$. Furthermore, our interpretation in terms of divergences between probability measures on path space remains valid, at least in the case when $\sigma$ is constant (in the following we let $\sigma = I_{d \times d}$ for simplicity)\footnote{For more general diffusion coefficients, we can make similar arguments considering measures on the path space associated to $(W_t)_{t\ge 0}$, however departing slightly from the set-up in this paper.}. 
Indeed, denoting as before the path measure associated to \eqref{eq:generalised forward} by $\mathbb{P}$, defining the target $\mathbb{Q}$ via $\tfrac{\mathrm{d}\mathbb{Q}}{\mathrm{d}\mathbb{P}} \propto e^{-g}$, and introducing the neural network approximation $\widetilde{u} \approx -\sigma^\top\nabla V$, the backward SDE \eqref{eq:generalised backward} induces a $\widetilde{u}$-dependent path measure $\mathbb{P}^{\widetilde{u}}$,
\begin{equation} \frac{\mathrm{d}\mathbb{P}^{\widetilde{u}}}{\mathrm{d}\mathbb{P}}(X) \propto \exp \left( \int_0^T h(X_s,s,-\widetilde{u}(X_s,s)) \, \mathrm{d}s -\int_0^T \widetilde{u}(X_s,s)\cdot \left(b(X_s,s)\, \mathrm{d}s - \mathrm{d}X_s \right) \right),
\end{equation}
assuming that the right-hand side is $\mathbb{P}$-integrable.
Using $Z \approx -\widetilde{u}$ in \eqref{eq:generalised backward} and denoting the corresponding process by $Y^{\widetilde{u}}$, we then obtain 
\begin{equation}
\label{eq:var general}
\mathcal{L}(\widetilde{u}) = {\Var}_{\mathbb{P}}\left( \log \frac{\mathrm{d}\mathbb{Q}}{\mathrm{d}\mathbb{P}^{\widetilde{u}}}\right) = \Var \left( Y^{\widetilde{u}}_T - g(X_T)\right)
\end{equation}
as an implementable loss function, with straightforward modifications to \eqref{eq:FBSDE generalised} when $\mathbb{P}$ is replaced by $\mathbb{P}^v$, see \eqref{eq:general FBSDE}. Note, however, that in general the vector field $\widetilde{u}$ does not lend itself to a straightforward interpretation in terms of a control problem.
The PDEs treated in \cite{weinan2017deep,han2018solving} do not possess the shift-invariance property (that is, $h$ depends on $V$), and thus the vanishing of \eqref{eq:var general} does not characterise the solution to the PDE \eqref{eq:nonlinear 1st} uniquely (not even up to additive constants). Uniqueness may be restored by including appropriate terms in \eqref{eq:var general} enforcing the terminal condition \eqref{eq:nonlinear terminal}. Theoretical and numerical properties of such extensions may be fruitful directions for future work.   
\end{remark}
    
\subsection{Algorithmic outline and empirical estimators}
\label{sec:ensemble averages}
In order to motivate the theoretical analysis in the following sections, let us give a brief overview of algorithmic implementations based on the loss functions developed so far. We refer to Section \ref{sec:computational_aspects} for a more detailed account. Recall that by the construction outlined in Section \ref{sec:divergences}, the solution $u^*$ as defined in \eqref{eq:u_star} is characterised as the global minimum of $\mathcal{L}$, where $\mathcal{L}$ represents a generic loss function. Assuming a parametrisation $\mathbb{R}^p \ni \theta \mapsto u_{\theta}$ (derived from, for instance, a Galerkin truncation or a neural network), we apply gradient-descent type methods to the function $\theta \mapsto \mathcal{L}(u_\theta)$, relying on the explicit expressions obtained in Propositions \ref{prop:relative entropy}, \ref{prop:cross entropy} and \ref{prop:variance}. It is an important aspect that those expressions involve expectations that need to be estimated on the basis of ensemble averages. To approximate the loss $\mathcal{L}_{\RE}$, for instance, we use the estimator    
\begin{equation}
\label{eq:RE estimator}
\widehat{\mathcal{L}}_{\RE}^{(N)} (u)= \frac{1}{N}  \sum_{i=1}^N \left[ \frac{1}{2} \int_0^T \vert u(X_s^{u,(i)},s) \vert^2 \, \mathrm{d}s + \int_0^T f(X_s^{u,(i)}, s)\, \mathrm ds + g(X_T^{u,(i)})\right],
\end{equation}
where $(X^{u,(i)}_s)_{0 \le s \le T}$, $i=1, \ldots, N$ denote independent realisations of the solution to \eqref{eq:controlled SDE}, and $N \in \mathbb{N}$ refers to the batch size. The estimators $\widehat{\mathcal{L}}_{\CE}^{(N)}(u)$, $\widehat{\mathcal{L}}_{\Var}^{(N)}(u)$, $\widehat{\mathcal{L}}_{\Var}^{\log,(N)}(u)$ and $\widehat{\mathcal{L}}^{ (N)}_{\mathrm{moment}_v}(u,y_0)$ are constructed analogously, i.e. the estimator for the cross-entropy loss is given by
\begin{subequations}
\label{eq:CE estimator}
\begin{align}
\widehat{\mathcal{L}}^{(N)}_{\CE,v}(u) = \frac{1}{N} \sum_{i=1}^N \Bigg[ & \left( \frac{1}{2} \int_0^T \vert u(X^{v,(i)}_s, s) \vert^2 \,\mathrm{d}s - \int_0^T (u \cdot v)(X_s^{v,(i)}, s) \, \mathrm{d}s - \int_0^T u(X^{v,(i)}, s) \cdot \mathrm{d}W^{(i)}_s
\right) \\
& \exp \left(- \int_0^T v(X_s^{v,(i)}, s) \cdot \mathrm{d}W^{(i)}_s - \frac{1}{2} \int_0^T \vert v(X_s^{v,(i)}, s) \vert^2 \, \mathrm{d}s - \mathcal{W}(X^{v,(i)}) \right) \Bigg],
\end{align}
\end{subequations}
the estimator for the variance loss is given by
\begin{equation}
\label{eq:var estimator} \widehat{\mathcal{L}}^{(N)}_{\mathrm{Var}_v}(u) = \frac{1}{N-1}\sum_{i=1}^N \left(e^{\widetilde{Y}_T^{u, v, (i)} - g(X_T^{v,(i)})} - \left(\overline{e^{\widetilde{Y}_T^{u, v} - g(X_T^v)}}\right) \right)^2,
\end{equation}
the estimator for the log-variance loss by
\begin{equation}
\label{eq:log var estimator}
\widehat{\mathcal{L}}^{\log (N)}_{\Var_v}(u) = \frac{1}{N-1} \sum_{i=1}^N \left(\widetilde{Y}_T^{u,v,(i)} - g(X_T^{v,(i)}) -\left(\overline{\widetilde{Y}_T^{u,v} - g(X_T^{v})}\right) \right)^2,
\end{equation}
and the estimator for the moment loss by
\begin{equation}
\label{eq:moment estimator}
\widehat{\mathcal{L}}^{ (N)}_{\mathrm{moment}_v}(u,y_0) = \frac{1}{N} \sum_{i=1}^N \left(\widetilde{Y}_T^{u,v,(i)} + y_0 - g(X_T^{v,(i)}) \right)^2.
\end{equation}
In the previous displays, the overline denotes an empirical mean, for example
\begin{equation}
\overline{\widetilde{Y}_T^{u,v} - g(X_T^{v})} = \frac{1}{N} \sum_{i=1}^N  \left(\widetilde{Y}_T^{u,v,(i)} - g(X_T^{v,(i)}) \right),
\end{equation}
and $(W_t^{(i)})_{t \ge 0}$, $i=1,\ldots, N$ denote independent Brownian motions associated to $(X_t^{u,(i)})_{t \ge 0}$.
By the law of large numbers, the convergence $\widehat{\mathcal{L}}^{(N)} (u) \rightarrow \mathcal{L}(u)$ holds almost surely up to additive and multiplicative constants\footnote{More precisely, $\widehat{\mathcal{L}}_{\RE}^{(N)} (u) \rightarrow \mathcal{L}_{\RE}(u)- \log \mathcal{Z}$ and $\widehat{\mathcal{L}}^{(N)}_{\CE,v}(u) \rightarrow \mathcal{Z}(\mathcal{L}_{\CE}(u) - C)$. The fact that the estimators $\widehat{\mathcal{L}}_{\RE}^{(N)}$ and $\widehat{\mathcal{L}}_{\CE, v}^{(N)}$ do not depend on the intractable constants $\mathcal{Z}$ and $C$ is crucial for the implementability of the associated methods.}, but as we show in Section \ref{sec:numerics}, the fluctuations for finite $N$ play a crucial role for the overall performance of the method. The variance associated to empirical estimators will hence be analysed in Section \ref{sec:finite sample properties}. 
\begin{remark}
The estimators introduced in this section are standard, and more elaborate constructions, for instance involving control variates \cite[Section 4.4.2]{robert2013monte}, can be considered to reduce the variance. We leave this direction for future work. It is noteworthy, however, that the log-variance estimator \eqref{eq:log var estimator} appears to act as a control variate in natural way, see Propositions \ref{prop:log var and KL} and \ref{prop: equivalence moment log-variance} and Remark \ref{remark: control variate}.
\end{remark}
\begin{remark}
Note that the estimator $\widehat{\mathcal{L}}^{(N)}_{\CE,v}$ depends on $v \in \mathcal{U}$, in contrast to its target $\mathcal{L}_{\CE}$; in other words, the limit $\lim_{N \rightarrow \infty} \widehat{\mathcal{L}}^{(N)}_{\CE,v}(u)$ does not depend on $v$. This contrasts the pairs $(\widehat{\mathcal{L}}^{(N)}_{\mathrm{Var}_v},\mathcal{L}_{\mathrm{Var}_v}) $ and $(\widehat{\mathcal{L}}^{\log,(N)}_{\mathrm{Var}_v},\mathcal{L}^{\log}_{\mathrm{Var}_v})$, see also Remark \ref{remark_CE_additional_v}.
\end{remark}
We provide a sketch of the algorithmic procedure in Algorithm \ref{alg:sketch}. Clearly, choosing different loss functions (and corresponding estimators) at every gradient step as indicated leads to viable algorithms. In particular, we have in mind the option of adjusting the forward control $v \in \mathcal{U}$ using the current approximation $u_\theta$. More precisely, denoting by $u_\theta^{(j)}$ the approximation at the $j^{\text{th}}$ step, it is reasonable to set $v= u^{(j)}_\theta$ in the iteration yielding $u^{(j+1)}_\theta$. In the remainder of this paper, we will focus on this strategy for updating $v$, leaving differing schemes for future work. 
\par\bigskip
\begin{algorithm}[H]
\label{alg:sketch}
\SetAlgoLined
Choose a parametrisation $\mathbb{R}^p \ni \theta \mapsto u_{\theta}$.\\
Initialise $u_\theta$ (with a parameter vector $\theta \in \R^p$). \\
Choose an optimisation method $\mathit{descent}$, a batch size $N \in \mathbb{N}$ and a learning rate $\eta > 0$.
\\
\Repeat{ convergence}{
Choose a loss function $\mathcal{L}$ and a corresponding estimator $\widehat{\mathcal{L}}^{(N)}$.\\
Compute $\widehat{\mathcal{L}}^{(N)}(u_\theta)$ according to either \eqref{eq:RE estimator}, \eqref{eq:CE estimator}, \eqref{eq:var estimator}, \eqref{eq:log var estimator} or \eqref{eq:moment estimator}.  \\
Compute $\nabla_{\theta}\widehat{\mathcal{L}}^{(N)}(u_\theta)$ using automatic differentiation.\\
Update parameters: $\theta \gets \theta - \eta \, \mathit{descent}(\nabla_\theta\widehat{\mathcal{L}}^{(N)}(u_\theta))$.
}
\caption{Approximation of $u^*$}
\KwResult{$u_\theta \approx u^*$.}
\end{algorithm}
    
\section{Equivalence properties in the limit of infinite batch size}
\label{sec: infinite batch size}
    
In this section we will analyse some of the properties of the losses defined in Section \ref{sec:divergences}, not taking into account the approximation by ensemble averages described in Section \ref{sec:ensemble averages}. In other words, the results in this section are expected to be valid when the batch size $N$ used to compute the estimators $\widehat{\mathcal{L}}^{(N)}$ is sufficiently large. The derivatives relevant for the gradient-descent type methodology described in Section \ref{sec:ensemble averages} can be computed as follows,
\begin{equation}
\label{eq: phi <-> gradient}
\frac{\partial}{\partial \theta_i} \mathcal{L}(u_\theta) = \frac{\delta}{\delta u} \mathcal{L}(u;\phi_i) \Big\vert_{u = u_\theta}, \qquad \phi_i = \frac{\partial u_\theta}{\partial \theta_i},
\end{equation}
where $\frac{\delta}{\delta u} \mathcal{L}(u;\phi)$ denotes the G{\^a}teaux derivative in direction $\phi$. We recall its definition \cite[Section 5.2]{siddiqi1986functional}:
\begin{definition}[G{\^a}teaux derivative]
\label{def:Gateaux}
Let $u \in \mathcal{U}$ and $\phi \in C_b^1(\mathbb{R}^d \times [0,T]; \mathbb{R}^d)$. A loss function $\mathcal{L}:\mathcal{U} \to \mathbb{R}$ is called \emph{G{\^a}teaux-differentiable} at $u$, if, for all $\phi \in C_b^1(\mathbb{R}^d \times [0,T]; \mathbb{R}^d)$, the real-valued function $\varepsilon \mapsto \mathcal{L}(u + \varepsilon \phi)$ is differentiable at $\varepsilon = 0$. In this case we define the \emph{G{\^a}teaux derivative in direction $\phi$} to be
\begin{equation}
\frac{\delta}{\delta u} \mathcal{L}(u; \phi) := \frac{\mathrm d}{\mathrm d \varepsilon}\Big|_{\varepsilon = 0}\mathcal{L}(u + \varepsilon \phi).
\end{equation}
\end{definition}
\begin{remark}
The functions $\phi_i$ defined in \eqref{eq: phi <-> gradient} depend on the chosen parametrisation for $u$. In the case when a Galerkin truncation is used, $
u_\theta = \sum_{i} \theta_i \alpha_i,$
these coincide with the chosen ansatz functions (i.e. $\phi_i = \alpha_i$). Concerning neural networks, the family $(\phi_i)_i$ reflects the choice of the architecture, the function $\phi_i$ encoding the response to a a change in the $i^{\text{th}}$ weight. For convenience, we will throughout work under the assumption (implicit in Definition \ref{def:Gateaux}) that the functions $\phi_i$ are bounded, noting however that this could be relaxed with additional technical effort. Furthermore, note that Definition \ref{def:Gateaux} extends straightforwardly to the estimator versions $\widehat{\mathcal{L}}^{(N)}$.
\end{remark}

The following result shows that algorithms based on $\frac{1}{2}\mathcal{L}_{\Var_v}^{\log}$ and $\mathcal{L}_{\RE}$ behave equivalently in the limit of infinite batch size, provided that the update rule $v=u$ for the log-variance loss is applied (see the discussion towards the end of Section \ref{sec:ensemble averages}), and that \emph{`all other things being equal'}, for instance in terms of network architecture and choice of optimiser. Furthermore, we provide an analytical expression for the gradient for future reference.
\begin{proposition}[Equivalence of log-variance loss and relative  entropy loss]
\label{prop:log var and KL}
Let $u,v \in \mathcal{U}$ and $\phi \in C_b^1(\mathbb{R}^d \times [0,T] ; \mathbb{R}^d)$. Then $\mathcal{L}_{\Var_v}^{\log}$ and $\mathcal{L}_{\mathrm{RE}}$ are G{\^a}teaux-differentiable at $u$ in direction $\phi$. Furthermore, 
\begin{equation}
\label{eq:exact gradient}
\frac{1}{2}\left(  \frac{\delta}{\delta u} \mathcal{L}_{\Var_v}^{\log}(u;\phi) \right)\Big \vert_{v = u} =  \frac{\delta}{\delta u} \mathcal{L}_{\mathrm{RE}}(u;\phi) = {\E}\left[\left(g(X_T^u) - \widetilde{Y}_T^{u, u}\right) \int_0^T \phi(X_s^u,s)\cdot \mathrm dW_s \right].
\end{equation}
\end{proposition}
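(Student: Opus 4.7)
The plan is to compute each of the two Gâteaux derivatives separately, simplify using Itô isometry and the martingale property, and then verify they yield the common expression on the right-hand side of \eqref{eq:exact gradient}. Throughout, the hypothesis $\phi \in C_b^1$ together with Assumption \ref{ass:SDE coefficients} and the linear-growth constraint in \eqref{eq:control set} guarantee the integrability and differentiability under the expectation that I will tacitly invoke.

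For $\mathcal{L}_{\mathrm{RE}}$, I would start from the explicit formula \eqref{eq:RE explicit} provided by Proposition \ref{prop:relative entropy}, and handle the dependence on $u$ both in the integrand and in the process $X^u$ via Girsanov's theorem. The key identity is
\begin{equation*}
\mathbb{E}[F(X^{u+\varepsilon\phi})] = \mathbb{E}\Big[F(X^u) \exp\Big(\varepsilon \int_0^T \phi(X^u_s,s)\cdot \mathrm{d}W_s - \tfrac{\varepsilon^2}{2}\int_0^T |\phi(X^u_s,s)|^2\,\mathrm{d}s\Big)\Big],
\end{equation*}
which follows by expressing $\frac{\mathrm{d}\mathbb{P}^{u+\varepsilon\phi}}{\mathrm{d}\mathbb{P}^u}$ using Girsanov with the driving Brownian motion of $X^u$. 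Differentiating at $\varepsilon=0$ gives $\frac{\mathrm{d}}{\mathrm{d}\varepsilon}|_{\varepsilon=0} \mathbb{E}[F(X^{u+\varepsilon\phi})] = \mathbb{E}[F(X^u)\int_0^T \phi(X^u_s,s)\cdot \mathrm{d}W_s]$. Applying this with $F = \tfrac{1}{2}\int_0^T |u|^2\,\mathrm{d}s + \mathcal{W}$, and adding the contribution from differentiating $|u+\varepsilon\phi|^2$, yields $\frac{\delta}{\delta u}\mathcal{L}_{\mathrm{RE}}(u;\phi) = \mathbb{E}[\int_0^T u\cdot\phi(X^u_s,s)\,\mathrm{d}s] + \mathbb{E}[(\tfrac{1}{2}\int_0^T |u|^2\,\mathrm{d}s + \mathcal{W}(X^u))\int_0^T \phi\cdot \mathrm{d}W_s]$. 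Applying Itô isometry to the first term rewrites it as $\mathbb{E}[(\int_0^T u\cdot \mathrm{d}W_s)(\int_0^T \phi\cdot \mathrm{d}W_s)]$, which combines with the remaining term into $\mathbb{E}[(g(X^u_T) - \widetilde{Y}_T^{u,u})\int_0^T \phi\cdot \mathrm{d}W_s]$ after recalling from \eqref{def_Y_T} that $g(X^u_T) - \widetilde{Y}_T^{u,u} = \mathcal{W}(X^u) + \int_0^T u\cdot \mathrm{d}W_s + \tfrac{1}{2}\int_0^T |u|^2\,\mathrm{d}s$.

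For $\mathcal{L}^{\log}_{\mathrm{Var}_v}$, the computation is substantially simpler because with $v$ held fixed during differentiation, the process $X^v$ does not depend on $\varepsilon$, so only the integrand in \eqref{def_Y_T} must be differentiated. Using $\frac{\mathrm{d}}{\mathrm{d}\varepsilon}\mathrm{Var}(A_\varepsilon) = 2\mathrm{Cov}(A_\varepsilon, A_\varepsilon')$ applied to $A_\varepsilon = \widetilde{Y}_T^{u+\varepsilon\phi,v} - g(X^v_T)$, and then specialising to $v=u$, the three non-stochastic-integral contributions to $A_0'$ cancel pairwise, leaving $A_0'|_{v=u} = -\int_0^T \phi(X^u_s,s)\cdot \mathrm{d}W_s$. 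Since this Itô integral has zero mean, the covariance equals the expectation of the product, yielding $\tfrac{1}{2}\frac{\delta}{\delta u}\mathcal{L}^{\log}_{\mathrm{Var}_v}(u;\phi)|_{v=u} = \mathbb{E}[(g(X^u_T)-\widetilde{Y}_T^{u,u})\int_0^T \phi\cdot \mathrm{d}W_s]$, matching the expression obtained for $\mathcal{L}_{\mathrm{RE}}$.

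The main obstacle is the $\mathcal{L}_{\mathrm{RE}}$ computation: one must correctly account for the $\varepsilon$-dependence of the law of $X^{u+\varepsilon\phi}$ via a Girsanov density on the reference probability space and then recognise the exact cancellation/recombination (via Itô isometry) of three a priori distinct terms into the single expected product of $g(X^u_T) - \widetilde{Y}_T^{u,u}$ against the Itô integral. By contrast, once the $v=u$ identification is made, the log-variance derivative collapses essentially for free. Justifying the interchange of differentiation and expectation, the finiteness of the Girsanov density's moments, and the martingale property of the resulting Itô integrals all follow from the boundedness of $\phi$ and the standing assumptions on $u$, $b$, $\sigma$.
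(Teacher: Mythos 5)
Your proposal is correct and follows essentially the same route as the paper's proof: a Girsanov change of measure to express $\mathcal{L}_{\mathrm{RE}}(u+\varepsilon\phi)$ as an expectation over $X^u$, differentiation at $\varepsilon=0$ plus It\^o isometry to collapse the three terms, and for the log-variance loss a direct differentiation of the variance and specialisation to $v=u$ so the $\mathrm{d}s$-terms in $\frac{\mathrm{d}}{\mathrm{d}\varepsilon}\widetilde{Y}_T^{u+\varepsilon\phi,v}$ cancel and the remaining It\^o integral has zero mean. The only cosmetic difference is that you write the Girsanov density in standard form with $-\frac{\varepsilon^2}{2}\int|\phi|^2\,\mathrm ds$ while the paper works with $\Lambda_T^{-1}$, but since that factor has zero derivative at $\varepsilon=0$ the two computations agree.
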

\begin{remark}
\label{rem:log var and KL}
Proposition \ref{prop:log var and KL} extends the connection between the cost functional \eqref{eq:cost functional} and the FBSDE formulation \eqref{eq:FBSDE} exposed in Theorem \ref{thm:connections}. Indeed, the Problems \ref{prob:optimal control} and \ref{prob:FBSDE} do not only agree on identifying the solution $u^*$; it is also the case that the gradients of the corresponding loss functions agree for $u \neq u^*$.
    
Moreover, it is instructive to compare the expressions \eqref{eq:RE explicit} and \eqref{eq:log var explicit} (or their sample based variants \eqref{eq:RE estimator} and \eqref{eq:log var estimator}). Namely, computing the derivatives associated to the relative entropy loss entails differentiating both the SDE-solution $X^u$ as well as $f$ and $g$, determining the running and terminal costs. Perhaps surprisingly, the latter is not necessary for obtaining the derivatives of the log-variance loss, opening the door for gradient-free implementations.
\end{remark}
    
\begin{proof}[Proof of Proposition \ref{prop:log var and KL}]
We present a heuristic argument based on the perspective introduced in Section \ref{sec:divergences} and refer to Appendix \ref{app:infinite batch size} for a rigorous proof.

For fixed $\P \in \mathcal{P}(\mathcal{C})$, let us consider perturbations $\P + \varepsilon \mathbb{U}$, where $\mathbb{U}$ is a signed measure with $\mathbb{U}(\mathcal{C}) = 0$. Assuming sufficient regularity, we then expect
\begin{equation}
\label{eq:RE abstract gradient}
\frac{\mathrm{d}}{\mathrm{d}\varepsilon} \Big|_{\varepsilon = 0} D^{\RE} (\P + \varepsilon \U | \Q) = 
\frac{\mathrm{d}}{\mathrm{d}\varepsilon} \Big|_{\varepsilon = 0} {\E}_{\P} \left[ \log \left( \frac{\mathrm{d}(\P + \varepsilon \U)}{ \mathrm{d}\Q} \right) \frac{\mathrm{d}(\P + \varepsilon \U)}{\mathrm{d}\P}\right]
= \underbrace{{\E}_{\P} \left[ \frac{\mathrm{d} \U}{\mathrm{d}\P}\right]}_{=0} + {\E}_{\P} \left[ \log \left( \frac{\mathrm{d}\P}{\mathrm{d}\Q} \right) \frac{\mathrm{d}\U}{\mathrm{d}\P}\right],
\end{equation}
where the first term on the right-hand side vanishes because of $\mathbb{U}(\mathcal{C}) = 0$. Likewise,
\begin{subequations}
\label{eq:abstract gradient computation}
\begin{align}
\frac{\mathrm{d}}{\mathrm{d}\varepsilon} \Big|_{\varepsilon = 0} D^{\mathrm{Var(log)}}_{\widetilde{\mathbb{P}}}(\mathbb{P} + \varepsilon \U \vert \Q)  & = 
\frac{\mathrm{d}}{\mathrm{d}\varepsilon} \Big|_{\varepsilon = 0} \left( {\E}_{\widetilde{\P}} \left[\log^2 \left( \frac{\mathrm{d}(\P + \varepsilon \U)}{\mathrm{d}\Q}\right) \right] -
{\E}_{\widetilde{\P}} \left[\log \left( \frac{\mathrm{d}(\P + \varepsilon \U)}{\mathrm{d}\Q}\right) \right]^2
\right)
\\
\label{eq:log var abstract gradient}
& = 2 \,{\E}_{\widetilde{\P}} \left[ \log \left( \frac{\mathrm{d}\P}{\mathrm{d}\Q}\right) \frac{\mathrm{d}\U}{\mathrm{d}\P}\right] - 2\, {\E}_{\widetilde{\P}} \left[ \log \left( \frac{\mathrm{d}\P}{\mathrm{d}\Q}\right)\right] {\E}_{\widetilde{\P}} \left[ \frac{\mathrm{d}\U}{\mathrm{d}\P}\right]. 
\end{align}
\end{subequations}
For $\widetilde{\P} = \P$, the second term in \eqref{eq:log var abstract gradient} vanishes (again, because of $\U(\mathcal{C}) = 0$), and hence \eqref{eq:log var abstract gradient} agrees with \eqref{eq:RE abstract gradient} up to a factor of $2$.
\end{proof}
\begin{remark}[Local minima]
It is interesting to note that \eqref{eq:abstract gradient computation} can be expressed as 
\begin{equation}
\frac{\mathrm{d}}{\mathrm{d}\varepsilon} \Big|_{\varepsilon = 0} D^{\mathrm{Var(log)}}_{\widetilde{\mathbb{P}}}(\mathbb{P} + \varepsilon \U \vert \Q) = 2\, \mathrm{Cov}_{\widetilde{\P}} \left(\log \frac{\mathrm{d}\P}{\mathrm{d}\Q}, \frac{\mathrm{d}\U}{\mathrm{d}\P} \right). 
\end{equation}
In particular, the derivative is zero for all $\U$ with $\U(\mathcal{C}) = 0$ if and only if $\P = \Q$. In other words, we expect the loss landscape associated to losses based on the log-variance divergence to be free of local minima where the optimisation procedure could get stuck. A more refined analysis concerning the relative entropy loss can be found in \cite{lie2016convexity}.
\end{remark}
In the following proposition, we gather results concerning the moment loss $\mathcal{L}_{\mathrm{moment}_v}$ defined in \eqref{eq:moment}. The first statement is analogous to Proposition \ref{prop:log var and KL} and shows that $\mathcal{L}_{\mathrm{moment}_v}$ and $\mathcal{L}^{\log}_{\Var_v}$  are equivalent in the infinite batch size limit, provided that the update strategy $v=u$ is employed. The second statement deals with the alternative $v \neq u$. In this case, $y_0 = -\log \mathcal{Z}$ (i.e. finding the optimal $y_0$ according to Theorem \ref{thm:connections}) is necessary for $\mathcal{L}_{\mathrm{moment}_v}$ to identify the correct $u^*$. Consequently, approximation of the optimal control will be inaccurate unless the parameter $y_0$ is determined without error.
\begin{proposition}[Properties of the moment loss]
\label{prop: equivalence moment log-variance}
Let $u,v \in \mathcal{U}$ and $y_0 \in \mathbb{R}$. Then the following holds:
\begin{enumerate}
\item 
The losses  $\mathcal{L}_{\mathrm{moment},v}(\cdot, y_0)$ and $\mathcal{L}_{\Var_v}^{\log}$ are G{\^a}teaux-differentiable at $u$, and
\begin{equation}
\label{eq:moment log var gradient}
\left(\frac{\delta }{\delta u}\mathcal{L}_{\mathrm{moment}_v}(u, y_0;\phi) \right)\Big |_{v=u} = \left(\frac{\delta }{\delta u} \mathcal{L}^{\log}_{\Var_v}(u;\phi) \right) \Big |_{v=u}
\end{equation}
holds for all $\phi \in C_b^1(\mathbb{R}^d \times [0,T]; \mathbb{R}^d)$. In particular, \eqref{eq:moment log var gradient} is zero at $u = u^*$, independently of $y_0$. 
\item
If $v \ne u$, then
\begin{equation}
\frac{\delta }{\delta u}\mathcal{L}_{\mathrm{moment}_v}(u, y_0;\phi)= 0
\end{equation}
holds for all $\phi\in C_b^1(\mathbb{R}^d \times [0,T]; \mathbb{R}^d)$ if and only if $u = u^*$ and $y_0 = -\log \mathcal{Z}$.
\end{enumerate}
\end{proposition}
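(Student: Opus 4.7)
The cornerstone of my proof is the algebraic identity
$$\mathcal{L}_{\mathrm{moment}_v}(u,y_0) = \mathcal{L}^{\log}_{\mathrm{Var}_v}(u) + \bigl(y_0 + \mathbb{E}[A^{u,v}]\bigr)^2, \qquad A^{u,v}:=\widetilde Y_T^{u,v}-g(X_T^v),$$
obtained by expanding the square in $\mathbb{E}[(A^{u,v}+y_0)^2]$ and using $\mathbb{E}[(A-\mathbb{E} A)^2]=\mathrm{Var}(A)$. Since $X^v$ is independent of $u$, formula \eqref{def_Y_T} differentiates directly to
$$\frac{\delta A^{u,v}}{\delta u}(\phi) = \int_0^T\bigl((u-v)\cdot\phi\bigr)(X_s^v,s)\,\mathrm ds - \int_0^T\phi(X_s^v,s)\cdot\mathrm dW_s,$$
and Gâteaux-differentiability of both losses then follows from standard dominated-convergence arguments (boundedness of $\phi$ together with Assumption \ref{ass:SDE coefficients} bounds the required moments).

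For the first claim, the chain rule applied to the decomposition gives
$$\frac{\delta\mathcal L_{\mathrm{moment}_v}}{\delta u}(u,y_0;\phi)-\frac{\delta\mathcal L^{\log}_{\mathrm{Var}_v}}{\delta u}(u;\phi) = 2\bigl(y_0+\mathbb{E}[A^{u,v}]\bigr)\,\mathbb{E}\!\left[\tfrac{\delta A^{u,v}}{\delta u}(\phi)\right].$$
Setting $v=u$ annihilates the Lebesgue part of $\tfrac{\delta A^{u,v}}{\delta u}(\phi)$, leaving a mean-zero Ito integral; the right-hand side vanishes, yielding \eqref{eq:moment log var gradient}. Vanishing at $u=u^*$ then follows because $A^{u^*,v}\equiv\log\mathcal Z$ almost surely: applying Ito to $V(X_s^v,s)$, using the HJB-PDE \eqref{eq:HJB 1st} together with $\sigma^\top\nabla V=-u^*$, identifies $V(X_s^v,s)$ with $Y_s^{u^*,v}(-\log\mathcal Z)$ in \eqref{eq:general FBSDE}, so that $A^{u^*,v}-\mathbb{E}[A^{u^*,v}]\equiv 0$ and the covariance expression for $\tfrac{\delta\mathcal L^{\log}_{\mathrm{Var}_v}}{\delta u}$ vanishes regardless of $y_0$.

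Direction ``$\Leftarrow$'' of the second claim is immediate: at $(u^*,-\log\mathcal Z)$ the prefactor $A^{u^*,v}+y_0=\log\mathcal Z-\log\mathcal Z=0$ almost surely. For ``$\Rightarrow$'', set $c:=y_0+\mathbb{E}[A^{u,v}]$; differentiating the decomposition, vanishing of the $\phi$-derivative is equivalent to
$$\frac{\delta\mathcal L^{\log}_{\mathrm{Var}_v}}{\delta u}(u;\phi) = -2c\,\mathbb{E}\!\int_0^T\bigl((u-v)\cdot\phi\bigr)(X_s^v,s)\,\mathrm ds \qquad \text{for all } \phi\in C_b^1. \quad (\ast)$$

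The main obstacle is to extract $u=u^*$ and $c=0$ from $(\ast)$ under the hypothesis $v\ne u$. My plan is to apply martingale representation to $B_s:=\mathbb{E}[A^{u,v}+y_0\mid\mathcal F_s]$: an Ito computation based on the Feynman-Kac solution $\tilde V(x,s):=\mathbb{E}[\int_s^T(-u\cdot v-f+\tfrac12|u|^2)(X_\tau^v,\tau)\,\mathrm d\tau-g(X_T^v)\mid X_s^v=x]$ (solving $(\partial_t+L^v)\tilde V = u\cdot v+f-\tfrac12|u|^2$ with $\tilde V(\cdot,T)=-g$) identifies the martingale integrand as $\Theta_s=(-u+\sigma^\top\nabla\tilde V)(X_s^v,s)$. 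Combining Ito's isometry on the $\mathrm dW$-term of $\tfrac{\delta A^{u,v}}{\delta u}(\phi)$ with the tower property on the $\mathrm ds$-term, varying $\phi$, and invoking strict positivity of the density of $X_s^v$ (guaranteed by ellipticity), $(\ast)$ reduces to the pointwise identity
$$(u-v)(x,s)\bigl(\mathbb{E}[B_s\mid X_s^v=x]+c\bigr) = (-u+\sigma^\top\nabla\tilde V)(x,s)\quad\text{a.e.} \quad (\ast\ast)$$
Substituting $\bar V:=\tilde V+V$ and using \eqref{eq:HJB 1st} plus $\sigma^\top\nabla V=-u^*$, $\bar V$ is seen to satisfy a linear parabolic equation with source $\tfrac12(u-u^*)\cdot(u+u^*-2v)$ and zero terminal data. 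Coupling $(\ast\ast)$ with the Ito-isometric representation $\mathcal L^{\log}_{\mathrm{Var}_v}(u)=\mathbb{E}\int_0^T|{-u+\sigma^\top\nabla\tilde V}|^2(X_s^v,s)\,\mathrm ds$ and comparing the linear vs.\ quadratic dependence on $u-u^*$ forces $u\equiv u^*$; then $\tilde V=-V$, $\bar V\equiv 0$, $(\ast\ast)$ reads $c(u^*-v)\equiv 0$ which, since $v\ne u = u^*$ on a set of positive $\rho^v$-measure, yields $c=0$ and hence $y_0=-\log\mathcal Z$.
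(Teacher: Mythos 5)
Your decomposition $\mathcal{L}_{\mathrm{moment}_v}(u,y_0) = \mathcal{L}^{\log}_{\Var_v}(u) + \bigl(y_0 + \mathbb{E}[A^{u,v}]\bigr)^2$ and the resulting gradient comparison are correct; this is a clean reformulation of the paper's direct differentiation and reaches part~1 and the ``if'' direction of part~2 by an essentially equivalent route, using the same ingredients (the expression for $\delta A^{u,v}/\delta u$, the mean-zero It\^{o} integral at $v=u$, and $A^{u^*,v}\equiv\log\mathcal{Z}$ a.s.).

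For the ``only if'' direction of part~2, the paper's own proof is considerably terser than yours: it simply inserts $u=u^*$ into the gradient formula, uses $\widetilde{Y}_T^{u^*,v}-g(X_T^v)=\log\mathcal{Z}$ a.s., and observes that the resulting expression vanishes for all $\phi$ iff $y_0=-\log\mathcal{Z}$; the exclusion of stationary points with $u\ne u^*$ is left implicit. You attempt to supply exactly that missing piece, but your sketch has two problems. (i) Since you define $B_s:=\mathbb{E}[A^{u,v}+y_0\mid\mathcal{F}_s]$, the conditional expectation $\mathbb{E}[B_s\mid X_s^v=x]$ already absorbs $c=B_0$, and the extra ``$+c$'' in $(\ast\ast)$ is spurious; carrying out the reduction of $(\ast)$ via It\^{o} isometry and the tower property gives $\mathbb{E}[B_s\mid X_s^v=x]\,(u-v)(x,s)=(-u+\sigma^\top\nabla\tilde{V})(x,s)$ a.e.\ with no additive $c$. (ii) More seriously, the concluding step---``comparing the linear vs.\ quadratic dependence on $u-u^*$ forces $u\equiv u^*$''---is not a valid deduction as stated. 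The pointwise identity $\beta(u-v)=-(u-u^*)+\sigma^\top\nabla\bar{V}$ is linear in $u-u^*$ on its right-hand side modulo a term $\sigma^\top\nabla\bar{V}$, where $\bar{V}$ depends on $u$ nonlocally through a parabolic equation whose source is quadratic in $u-u^*$; there is no transparent degree-count or comparison that rules out $u\ne u^*$, and an actual uniqueness or energy argument would be needed. This is a genuine gap in the proposal, though it concerns a step the paper itself does not spell out.
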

\begin{proof}
The proof can be found in Appendix \ref{app:infinite batch size}.
\end{proof}
    
\begin{remark}[Control variates]
\label{remark: control variate}
Inspecting the proofs of Propositions \ref{prop:log var and KL} and \ref{prop: equivalence moment log-variance}, we see that the identities \eqref{eq:exact gradient} and \eqref{eq:moment log var gradient} rest on the vanishing of terms of the form
$
\beta \, {\E} \left[\int_0^T \phi(X_s^u,s) \cdot \mathrm{d}W_s \right],
$
where $\beta = -y_0$ for the moment loss and $\beta = - \E\left[g(X_T^u) - \widetilde{Y}^{u,u}_T\right]$ for the log-variance loss. The corresponding Monte Carlo estimators (see Section \ref{sec:ensemble averages}) hence include terms that are zero in expectation and act as control variates \cite[Section 4.4.2]{robert2013monte}. Using the explicit expression for the derivative in \eqref{eq:exact gradient}, the optimal value for $\beta$ in terms of variance reduction is given by
\begin{subequations}
\begin{align}
\beta^* & = - \frac{\operatorname{Cov}\left(\left( g(X_T^u) - \widetilde{Y}_T^{u,u}  \right) \int_0^T \phi(X_s^u,s) \cdot \mathrm{d}W_s, \int_0^T \phi(X_s^u,s) \cdot \mathrm{d}W_s\right)}{\Var\left( \int_0^T \phi(X_{s}^{u},s) \cdot \mathrm{d}W_s \right)}\\
&= - \E\left[ g(X_T^u) - \widetilde{Y}_T^{u,u} \right] - \frac{\operatorname{Cov}\left(  g(X_T^u) - \widetilde{Y}_T^{u,u}, \left(\int_0^T \phi(X_s^u,s) \cdot \mathrm{d}W_s\right)^2 \right)}{\E \left[ \left(\int_0^T \phi(X^u_s,s) \cdot \mathrm{d}W_s\right)^2\right]},
\end{align}
\end{subequations}
which splits into a $\phi$-independent (i.e. shared across network weights) and a $\phi$-dependent (i.e. weight-specific) term. The $\phi$-independent term is reproduced in expectation by the log-variance estimator. Numerical evidence suggests that the $\phi$-dependent term is often small and fluctuates around zero, but implementations that include this contribution (based on Monte Carlo estimates) hold the promise of further variance reductions. We note however that determining a control variate for every weight carries a significant computational overhead and that Monte Carlo errors need to be taken into account. Finally, if $y_0$ in the moment loss differs greatly from  $- \E\left[ g(X_T^u) - \widetilde{Y}_T^{u,u} \right]$, we expect the corresponding variance to be large, hindering algorithmic performance. In our follow-up paper \cite{richter2020vargrad}, we have provided a more detailed analysis of the connections between the log-variance divergences and variance reduction techniques in the context of computational Bayesian inference.
\end{remark}
    
\section{Finite sample properties and the variance of estimators}
    
\label{sec:finite sample properties}

In this section we investigate properties of the sample versions of the losses as outlined in Section \ref{sec:ensemble averages} and, in particular, study their variances and relative errors. We will highlight two different types of robustness, both of which prove significant for convergence speed and stability concerning  practical implementations of Algorithm \ref{alg:sketch}, see the numerical experiments in Section \ref{sec:numerics}.
    
\subsection{Robustness at the solution $u^*$}
    
By construction, the optimal control solution $u^*$ represents the global minimum of all considered losses. Consequently,  the associated directional derivatives vanish at $u^*$, i.e.
\begin{equation}
\frac{\delta}{\delta u}\Big|_{u=u^*}\mathcal{L}(u; \phi) = 0,
\end{equation}
for all $\phi \in C_b^1(\mathbb{R}^d \times [0,T]; \mathbb{R}^d)$.
A natural question is whether similar statements can be made with respect to the corresponding Monte Carlo estimators. We make the following definition.
    
\begin{definition}[Robustness at the solution $u^*$]
\label{def:robust u}
We say that an estimator $\widehat{\mathcal{L}}^{(N)}$ is \emph{robust at the solution $u^*$} if 
\begin{equation}
\Var\left(\frac{\delta}{\delta u}\Big|_{u=u^*}\widehat{\mathcal{L}}^{(N)}(u; \phi) \right) = 0,
\end{equation}
for all $\phi \in C_b^1(\mathbb{R}^d \times [0,T]; \mathbb{R}^d)$ and $N \in \mathbb{N}$.
\end{definition}
\begin{remark}
Robustness at the solution $u^*$ implies that fluctuations in the gradient due to Monte Carlo errors are suppressed close to $u^*$, facilitating accurate approximation. Conversely, if robustness at $u^*$ does not hold, then the relative error (i.e. the Monte Carlo error relative to the size of the gradients \eqref{eq: phi <-> gradient}) grows without bounds near $u^*$, potentially incurring instabilities of the gradient-descent type scheme. We refer to Figure \ref{rel_error_gradients_ma} and the corresponding discussion for an illustration of this phenomenon.
\end{remark}    

\begin{proposition}[Robustness and non-robustness at $u^*$] 
\label{prop:robustness at u}
The following holds:
\begin{enumerate}
\item
\label{it:u log variance}
The variance estimator $\widehat{\mathcal{L}}^{(N)}_{\Var_v}$ and the log-variance estimator $\widehat{\mathcal{L}}^{\log (N)}_{\Var_v}$ are robust at $u^*$, for all $v \in \mathcal{U}$.
\item
\label{it:u moment}
For all $v \in \mathcal{U}$, the moment estimator $\widehat{\mathcal{L}}^{(N)}_{\operatorname{moment}_v}(\cdot,y_0)$ is robust at $u^*$, i.e.
\begin{equation}
\Var\left(\frac{\delta}{\delta u}\Big|_{u=u^*}\widehat{\mathcal{L}}_{\mathrm{moment}_v}^{(N)}(u, y_0; \phi) \right) = 0,\qquad \text{for all} \,\ \phi \in  C_b^1(\mathbb{R}^d \times [0,T]; \mathbb{R}^d),
\end{equation}
if and only if $y_0 = - \log \mathcal{Z}$.
\item
\label{it:u relative entropy}
The relative entropy estimator $\widehat{\mathcal{L}}_{\RE}^{(N)}$ is not robust at $u^*$. More precisely, for $\phi \in C_b^1(\mathbb{R}^d \times [0,T]; \mathbb{R}^d)$, 
\begin{equation}
\label{eq:variance RE loss}
\Var\left(  \frac{\delta}{\delta u}\Big|_{u=u^*}\widehat{\mathcal{L}}_{\RE}^{(N)}(u; \phi) \right) = \frac{1}{N} \mathbb{E} \left[ \int_0^T \vert(\nabla u^*)^\top(X_s^{u^*},s)   A_s\vert^2 \,\mathrm ds \right],
\end{equation}
where $(A_s)_{0 \le s \le T}$ denotes the unique strong solution to the SDE
\begin{equation}
\label{eq:A equation}
\mathrm{d}A_s = (\sigma\phi)(X_s^{u^*},s) \, \mathrm{d}s +
\left[(\nabla b + \nabla (\sigma u^{*}))(X_s^{u^*},s)\right]^\top  A_s \, \mathrm{d}s + A_s \cdot \nabla \sigma(X_s^{u^*},s)\, \mathrm{d}W_s, \qquad A_0 = 0.
\end{equation}
\item
\label{it:u CE}
For all $v \in \mathcal{U}$, the cross-entropy estimator  $\widehat{\mathcal{L}}^{(N)}_{\CE, v}$ is not robust at $u^*$.
\end{enumerate}
\end{proposition}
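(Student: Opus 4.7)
The plan is to handle parts (1) and (2) via a single observation from Theorem~\ref{thm:connections}: since $\mathbb{P}^{u^*} = \mathbb{Q}$, the random variable $\widetilde{Y}_T^{u^*,v} - g(X_T^v)$ is almost surely equal to the deterministic constant $\log\mathcal{Z}$ for every $v \in \mathcal{U}$ (this is $\mathcal{L}^{\log}_{\Var_v}(u^*) = 0$ combined with \eqref{eq:shift}). Writing $Z_i(u) := \widetilde{Y}_T^{u,v,(i)} - g(X_T^{v,(i)})$ and differentiating \eqref{eq:var estimator} and \eqref{eq:log var estimator} via Leibniz's rule, every summand of the G\^ateaux derivative carries a factor of either $Z_i(u) - \overline{Z}(u)$ or $e^{Z_i(u)} - \overline{e^Z}(u)$, each of which vanishes pointwise at $u=u^*$ because all samples realise the same constant; this yields (1). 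For (2), the same computation applied to \eqref{eq:moment estimator} gives
\begin{equation*}
\frac{\delta}{\delta u}\Big|_{u^*} \widehat{\mathcal{L}}_{\mathrm{moment}_v}^{(N)}(u,y_0;\phi) = \frac{2(y_0 + \log\mathcal{Z})}{N}\sum_{i=1}^N \frac{\delta Z_i}{\delta u}(u^*;\phi),
\end{equation*}
whose variance equals $\tfrac{4(y_0 + \log\mathcal{Z})^2}{N}\Var(\tfrac{\delta Z_1}{\delta u}(u^*;\phi))$; this is zero for all $\phi$ iff $y_0 = -\log\mathcal{Z}$, since $\tfrac{\delta Z_1}{\delta u}(u^*;\phi) = \int_0^T(u^*-v)\cdot\phi\,ds - \int_0^T \phi\cdot dW$ has strictly positive variance for suitable $\phi$ by the It\^o isometry.

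For (3), the idea is to exploit the HJB identity to simplify the perturbed sample integrand $F(u) := \frac{1}{2}\int_0^T |u(X_s^u,s)|^2\,ds + \int_0^T f(X_s^u,s)\,ds + g(X_T^u)$. Applying It\^o's formula to $V(X_s^{u^*+\epsilon\phi}, s)$ along the controlled SDE and invoking \eqref{eq:HJB 1st} together with $u^* = -\sigma^\top\nabla V$, the running cost $f + \tfrac{1}{2}|u^*+\epsilon\phi|^2$ combines with the drift term $\sigma(u^* + \epsilon\phi)\cdot\nabla V$ to produce the remarkable cancellation
\begin{equation*}
F(u^*+\epsilon\phi) = V(x_{\mathrm{init}}, 0) + \frac{\epsilon^2}{2}\int_0^T |\phi(X_s^{u^*+\epsilon\phi}, s)|^2\, ds - \int_0^T u^*(X_s^{u^*+\epsilon\phi}, s)\cdot dW_s.
\end{equation*}
Differentiating at $\epsilon = 0$ annihilates the quadratic term and leaves only $-\int_0^T (\nabla u^*)^\top(X_s^{u^*}, s)\, A_s \cdot dW_s$, where the first-variation process $A$ is the linearisation of the controlled SDE and solves precisely \eqref{eq:A equation}. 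Applying the It\^o isometry to this stochastic integral and dividing by $N$ (from i.i.d.\ averaging) then yields \eqref{eq:variance RE loss}.

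Part (4) calls for a contradiction argument, since robustness must be ruled out for \emph{every} $v \in \mathcal{U}$. The exponential factor $E_v$ in \eqref{eq:CE estimator} is $u$-independent, so the per-sample G\^ateaux derivative at $u^*$ reads $\left(\int_0^T (u^* - v)\cdot \phi \,ds - \int_0^T \phi \cdot dW\right) E_v$, and its mean is a constant multiple of $\tfrac{\delta \mathcal{L}_{\CE}}{\delta u}(u^*;\phi)$, which vanishes since $u^*$ is the global minimum. If robustness held, this random variable would therefore be almost surely zero for every $\phi$; using $E_v > 0$ a.s., this would force $\int_0^T \phi \cdot dW = \int_0^T (u^*-v)\cdot\phi\,ds$ almost surely for every bounded $\phi$. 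Comparing quadratic variations on a single trajectory shows that this would imply $\int_0^T|\phi(X_s^v, s)|^2\,ds = 0$ almost surely, which is absurd for nontrivial $\phi$ under Assumption~\ref{ass:SDE coefficients}. The main delicacy here is the rigorous quadratic-variation comparison and the verification that the occupation measure of $X^v$ charges the support of $\phi$ with positive probability, but both are routine given the ellipticity condition.
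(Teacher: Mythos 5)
Your parts~(1), (2) and~(4) follow essentially the same route as the paper: in~(1) the key observation is that $\widetilde{Y}_T^{u^*,v,(i)} - g(X_T^{v,(i)})$ is almost surely the same constant across samples, so each differentiated summand vanishes; in~(2) the derivative factors as a nonrandom multiple $2(y_0 + \log\mathcal{Z})$ of $\frac{1}{N}\sum_i \frac{\delta \widetilde{Y}^{(i)}}{\delta u}$, whose variance vanishes for all $\phi$ iff that multiple is zero; in~(4) the paper also reduces to the almost-sure identity $\int_0^T (\phi\cdot(u^*-v))\,\mathrm{d}s = \int_0^T \phi\cdot\mathrm{d}W_s$ and declares it ``clearly false,'' whereas you supply the quadratic-variation comparison making the contradiction explicit — a worthwhile tightening. (You also correctly get the sign $\widetilde{Y}_T^{u^*,v} - g(X_T^v) = +\log\mathcal{Z}$, matching the paper's own Proposition~\ref{prop:variance} proof; the appendix of the paper contains a typo here.)

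Part~(3) is where you genuinely diverge from the paper, and your route is cleaner. You apply It\^{o}'s formula to $V(X_s^u,s)$ along the controlled trajectory \emph{before} differentiating, and use the HJB equation together with $u^*=-\sigma^\top\nabla V$ to collapse the sample integrand $F(u)$ to the pathwise verification identity
\begin{equation*}
F(u) \;=\; V(x_{\mathrm{init}},0) \;+\; \tfrac{1}{2}\int_0^T\!\big|u-u^*\big|^2(X_s^u,s)\,\mathrm{d}s \;-\;\int_0^T u^*(X_s^u,s)\cdot\mathrm{d}W_s,
\end{equation*}
from which the G\^ateaux derivative at $u=u^*+\varepsilon\phi$, $\varepsilon=0$, immediately reduces (the quadratic term being $O(\varepsilon^2)$) to the stochastic integral $-\int_0^T (\nabla u^*)^\top A_s\cdot\mathrm{d}W_s$, and It\^{o}'s isometry yields~\eqref{eq:variance RE loss}. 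The paper instead differentiates $\widehat{\mathcal{L}}_{\RE}^{(N)}$ first, producing the four-term expression~\eqref{eq:path derivative}, and then needs integration by parts and an It\^{o} expansion of $\nabla V(X^{u^*}_s,s)$ (using the HJB at the level of gradients) to reassemble those terms into a single martingale — a longer, coordinatewise computation. Both derivations require the same ingredients ($V\in C_b^{2,1}$, the first-variation SDE~\eqref{eq:A equation}, It\^{o} isometry), but your approach buys a pathwise identity that also immediately re-proves the verification theorem and makes the sources of variance transparent. The only point you should state explicitly is the justification for interchanging $\frac{\mathrm{d}}{\mathrm{d}\varepsilon}$ with the It\^{o} integral (the paper glosses over the same technicality, referring to \cite{kunita1984stochastic}), and that your $(\nabla u^*)^\top A_s$ is the directional derivative $(A_s\cdot\nabla)u^*$, matching the paper's notational convention.
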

\begin{remark}
The fact that robustness of the moment estimator at $u^*$ requires $y_0 = -\log \mathcal{Z}$ might lead to instabilities in practice as this relation is rarely satisfied exactly. Note that the variance of the relative entropy estimator at $u^*$ depends on $\nabla u^*$. We thus expect instabilities in metastable settings, where often this quantity is fairly large. For numerical confirmation, see Figure \ref{rel_error_gradients_ma} and the related discussion.
\end{remark}    
\begin{proof}
For illustration, we show the robustness of the log-variance estimator $\widehat{\mathcal{L}}^{\log (N)}_{\Var_v}$. The remaining proofs are deferred to Appendix \ref{app:products}. 
By a  straightforward calculation (essentially equivalent to \eqref{eq:calculation variance loss} in Appendix \ref{app:divergences}), we see that
\begin{subequations}
\begin{align}
\frac{\delta}{\delta u} 
\widehat{\mathcal{L}}^{\log (N)}_{\Var_v}(u;\phi) & =  \frac{2}{N-1} \sum_{i=1}^N \left[ \left(  g\left(X_T^{v,(i)}\right)-\widetilde{Y}_T^{u, v,(i)}  \right) \frac{\delta \widetilde{Y}_T^{u,v,(i)}}{\delta u} (u;\phi) \right]
\\
& - \frac{2}{N(N-1)} \sum_{i=1}^N \left[ \left(  g\left(X_T^{v,(i)}\right)-\widetilde{Y}_T^{u, v,(i)}  \right) \right] \sum_{i=1}^N \left[ \frac{\delta \widetilde{Y}_T^{u,v,(i)}}{\delta u} (u;\phi) \right],
\end{align}
\end{subequations}
where
\begin{equation}
\label{eq:dYdu}
\frac{\delta \widetilde{Y}_T^{u,v,(i)}}{\delta u} (u;\phi) = \int_0^T \phi(X_s^{v,(i)},s) \cdot \mathrm{d}W^{(i)}_s - \int_0^T \left(\phi \cdot (u - v) \right)(X_s^{v, (i)}, s) \, \mathrm ds. 
\end{equation}
The claim now follows from observing that
\begin{equation}
\left(g\left(X_T^{v,(i)}\right)-\widetilde{Y}_T^{u,v,(i)}\right)\Big|_{u = u^*} 
\end{equation}
is almost surely constant (i.e. does not depend on $i$), according to the second equation in \eqref{eq:general FBSDE backward}.
\end{proof}
    
\subsection{Stability  in high dimensions -- robustness under tensorisation}
\label{sec:products}
    
In this section we study the robustness of the proposed algorithms in high-dimensional settings. As a motivation, consider the case when the drift and diffusion coefficients in the uncontrolled SDE \eqref{eq:uncontrolled SDE} split into separate contributions along different dimensions,
\begin{equation}
b(x,s) = \sum_{i=1}^d b_i(x_i,s), \qquad \sigma(x,s) = \sum_{i=1}^d \sigma_i(x_i,s),
\end{equation}
for $x=(x_1,\ldots,x_d) \in \mathbb{R}^d$, and analogously for  the running and terminal costs $f$ and $g$ as well as for the control vector field $u$. It is then straightforward to show that the path measure $\mathbb{P}^u$ associated to the controlled SDE \eqref{eq:controlled SDE} and the target measure $\mathbb{Q}$ defined in \eqref{eq:reweighted measure} factorise,
\begin{equation}
\label{eq:factorisation}
\mathbb{P}^u = \bigotimes_{i=1}^d \mathbb{P}^{u_i}, \qquad \mathbb{Q} = \bigotimes_{i=1}^d \mathbb{Q}_i.
\end{equation}
From the perspective of statistical physics, \eqref{eq:factorisation} corresponds to the scenario where non-interacting systems are considered simultaneously. 
To study the case when $d$ grows large, we leverage the perspective put forward in Section \ref{sec:divergences}, recalling that $D(\mathbb{P}\vert \mathbb{Q})$ denotes a generic divergence. In what follows, we will denote corresponding estimators based on a sample of size $N$ by $\widehat{D}^{(N)}(\mathbb{P}\vert \mathbb{Q})$, and study the quantity
\begin{equation}
\label{eq:relative error}
r^{(N)}(\mathbb{P} \vert \mathbb{Q}) :=  \frac{\sqrt{\Var \left( \widehat{D}^{(N)}(\mathbb{P}\vert \mathbb{Q})\right)}}{D(\mathbb{P}\vert \mathbb{Q})},
\end{equation}
measuring the relative statistical error when estimating $D(\mathbb{P}\vert \mathbb{Q})$ from samples, noting that $r^{(N)}(\mathbb{P} \vert \mathbb{Q}) = \mathcal{O}(N^{-1/2})$.
As $r^{(N)}$ is clearly linked to algorithmic performance and stability, we are interested in divergences, corresponding loss functions and estimators whose relative error remains controlled when the number of independent factors in \eqref{eq:factorisation} increases:
    	
\begin{definition}[Robustness under tensorisation]
\label{def:products}
We say that a divergence $D: \mathcal{P}(\mathcal{C}) \times \mathcal{P}(\mathcal{C}) \rightarrow \mathbb{R} \cup \{+ \infty \}$ and a corresponding estimator $\widehat{D}^{(N)}$ are \emph{robust under tensorisation} if, for all $\mathbb{P},\mathbb{Q} \in \mathcal{P}(\mathcal{C})$ such that $D(\mathbb{P} \vert \mathbb{Q}) < \infty$ and $N \in \mathbb{N}$, there exists $C > 0$ such that 
\begin{equation}
r^{(N)} \left(\bigotimes_{i=1}^M \mathbb{P}_i \Big\vert \bigotimes_{i=1}^M \mathbb{Q}_i \right)  < C,
\end{equation}
for all $M \in \mathbb{N}$.
Here, $\mathbb{P}_i$ and $\mathbb{Q}_i$ represent identical copies of $\mathbb{P}$ and $\mathbb{Q}$, respectively, so that $\bigotimes_{i=1}^M \mathbb{P}_i$ and $\bigotimes_{i=1}^M \mathbb{Q}_i$ are measures on the product space $\bigotimes_{i=1}^M C([0,T],\mathbb{R}^d) \simeq C([0,T],\mathbb{R}^{Md})$.     
\end{definition}
Clearly, if $\P$ and $\Q$ are measures on $C([0,T],\mathbb{R})$, then $M$ coincides with the dimension of the combined problem.
\begin{remark}
The variance and log-variance divergences defined in \eqref{eq:var divergence} and \eqref{eq:var log divergence} depend on an auxiliary measure $\widetilde{\mathbb{P}}$. Definition \ref{def:products} extends straightforwardly by considering the product measures $\bigotimes _{i=1}^d\widetilde{\mathbb{P}}_i$. In a similar vein, the relative entropy and cross-entropy divergences admit estimators that depend on a further probability measure $\widetilde{\P}$, \begin{equation}
\label{eq:RE and CE estimators}
\widehat{D}^{\RE,(N)}_{\widetilde{\P}}(\P \vert \Q) = \frac{1}{N} \sum_{j=1}^N \left[ \log \left( \frac{\mathrm{d}\P}{\mathrm{d}\Q}\right) \frac{\mathrm{d}\P}{\mathrm{d}\widetilde{\P}}\right] (X^j), \quad \widehat{D}^{\CE,(N)}_{\widetilde{\P}}(\P \vert \Q) = \frac{1}{N} \sum_{j=1}^N \left[ \log \left( \frac{\mathrm{d}\Q}{\mathrm{d}\P}\right) \frac{\mathrm{d}\P}{\mathrm{d}\widetilde{\P}}\right] (X^j), 
\end{equation}  
where $X^j \sim \widetilde{\P}$, motivated by the identities $D^{\RE}(\P \vert \Q) = \mathbb{E}_{\widetilde{\P}} \left[ \log \left( \frac{\mathrm{d}\P}{\mathrm{d}\Q} \right) \frac{\mathrm{d}\P}{\mathrm{d}\widetilde{\P}}\right]$ and $D^{\CE}(\P \vert \Q) = \mathbb{E}_{\widetilde{\P}} \left[ \log \left( \frac{\mathrm{d}\Q}{\mathrm{d}\P} \right) \frac{\mathrm{d}\Q}{\mathrm{d}\widetilde{\P}}\right]$. We refer to Remark \ref{remark_CE_additional_v} for a similar discussion.
\end{remark}
    
\begin{proposition}
\label{prop:robustness}
We have the following robustness and non-robustness properties:
\begin{enumerate}
\item
\label{it:log var robust} 
The log-variance divergence $D^{\mathrm{Var(log)}}_{\widetilde{\mathbb{P}}}$, approximated using the standard Monte Carlo estimator, is robust under tensorisation, for all $\widetilde{\mathbb{P}} \in \mathcal{P}(\mathcal{C})$.
\item
\label{it:relative entropy robust}
The relative entropy divergence $D^{\RE}$, estimated using $\widehat{D}^{\RE,(N)}_{\widetilde{\P}}$, is robust under tensorisation if and only if $\widetilde{\P} = \P$.
    \item
    \label{it:var robust}
    The variance divergence $D^{\mathrm{Var}}_{\widetilde{\mathbb{P}}}$ is not robust under tensorisation when approximated using the standard Monte Carlo estimator. More precisely, if $\frac{\mathrm{d}\mathbb{Q}}{\mathrm{d}\mathbb{P}}$ is not $\widetilde{\mathbb{P}}$-almost surely constant, then, for fixed $N \in \mathbb{N}$, there exist constants $a > 0$ and $C>1$ such that
    \begin{equation}
    \label{eq:r var}
    r^{(N)} \left(\bigotimes_{i=1}^M \mathbb{P}_i \Big\vert \bigotimes_{i=1}^M \mathbb{Q}_i \right) \ge a \,C^M,
    \end{equation}
for all $M\ge1$.
\item
\label{it:cross entropy robust}
The cross-entropy divergence $D^{\RE}$, estimated using $\widehat{D}^{\RE,(N)}_{\widetilde{\P}}$, is not robust under tensorisation. More precisely, for fixed $N \in \mathbb{N}$ there exists a constant $a>0$ such that 
\begin{equation}
\label{eq:r CE}
    r^{(N)} \left(\bigotimes_{i=1}^M \mathbb{P}_i \Big\vert \bigotimes_{i=1}^M \mathbb{Q}_i \right) \ge a \left(\sqrt{ \chi^2 (\Q \vert \widetilde{\P}) + 1} \right)^M,
\end{equation}
for all $M \ge 1$. Here 
\begin{equation}
\chi^2(\Q \vert \widetilde{\P}) = {\E}_{\widetilde{\P}} \left[ \left( \frac{\mathrm{d} \Q }{\mathrm{d} \widetilde{\P}}\right)^2 - 1 \right]
\end{equation}
denotes the $\chi^2$-divergence between $\Q$ and $\widetilde{\P}$. 
\end{enumerate}
\end{proposition}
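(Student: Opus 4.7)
The common strategy for all four statements is to exploit the factorisation of product measures at the level of Radon-Nikodym derivatives. For any $M$-fold tensor product one has
\begin{equation}
\frac{\mathrm{d} \bigotimes_{i=1}^{M} \P_{2,i}}{\mathrm{d} \bigotimes_{i=1}^{M} \P_{1,i}}(X) = \prod_{i=1}^{M} \frac{\mathrm{d}\P_{2,i}}{\mathrm{d}\P_{1,i}}(X_i), \qquad \log \frac{\mathrm{d} \bigotimes_{i=1}^{M} \P_{2,i}}{\mathrm{d} \bigotimes_{i=1}^{M} \P_{1,i}}(X) = \sum_{i=1}^{M} \log \frac{\mathrm{d}\P_{2,i}}{\mathrm{d}\P_{1,i}}(X_i),
\end{equation}
and under $\bigotimes_{i=1}^M \widetilde{\P}_i$ the factors (respectively summands) are independent. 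Thus, for each divergence, the plan is to compute the growth rates in $M$ of both the divergence $D$ and the standard deviation $\sqrt{\Var(\widehat{D}^{(N)})}$ of its Monte Carlo estimator, and compare them. Robustness holds exactly when both grow at the same rate.

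For \eqref{it:log var robust}, writing $Y^{(j)} = \sum_{i=1}^{M} Z_i^{(j)}$ with $Z_i^{(j)} = \log(\mathrm{d}\P_{2,i}/\mathrm{d}\P_{1,i})(X_i^{(j)})$ iid across $i$, the divergence is $D = M\sigma_1^2$ with $\sigma_1^2 = \Var_{\widetilde{\P}}(Z_1)$, while the sample-variance estimator $\widehat{D}^{(N)}$ has variance
\begin{equation}
\Var(\widehat{D}^{(N)}) = \tfrac{1}{N}\bigl(\mu_4(Y) - \tfrac{N-3}{N-1}\sigma^4(Y)\bigr),
\end{equation}
and expanding $\mu_4(Y) = M\mu_4^{(1)} + 3M(M-1)\sigma_1^4$ by independence shows $\mu_4 - \sigma^4 = O(M^2)$, so $\sqrt{\Var\widehat{D}^{(N)}} = O(M)$, matching $D$. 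For \eqref{it:relative entropy robust}, since $D^{\mathrm{RE}}$ tensorises additively, $D = M D^{\mathrm{RE}}(\P|\Q)$; by independence the estimator variance equals $\frac{1}{N}\Var_{\widetilde{\P}}\bigl(\sum_i L_i \cdot \prod_i R_i\bigr)$ with $L_i = \log(\mathrm{d}\P_i/\mathrm{d}\Q_i)$ and $R_i = \mathrm{d}\P_i/\mathrm{d}\widetilde{\P}_i$. When $\widetilde{\P}=\P$ the factors $R_i$ are unity and one gets $\Var \widehat{D}^{(N)} = \frac{M}{N}\Var_{\P}(L_1) = O(M)$; otherwise an $\E[R_i^2]^M = (\chi^2(\P|\widetilde{\P})+1)^M$ factor appears from the second moment, destroying linear scaling.

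For \eqref{it:var robust}, set $R = \prod_i R_i$ with $R_i = \mathrm{d}\Q_i/\mathrm{d}\P_i$ and write $a = \E_{\widetilde{\P}}[R_i^2]$, $b = \E_{\widetilde{\P}}[R_i]$, $e = \E_{\widetilde{\P}}[R_i^4]$. Then $D = a^M - b^{2M}$, while the sample variance estimator has $\Var(\widehat{D}^{(N)}) \sim \mu_4(R)/N$, and factorisation gives $\mu_4(R) \sim e^M$ as the dominant contribution for large $M$. The assumption that $\mathrm{d}\Q/\mathrm{d}\P$ is not $\widetilde{\P}$-a.s.\ constant yields $e > a^2$ strictly (Jensen applied to $R_i^2$), so $r^{(N)} \sim (e/a^2)^{M/2}/\sqrt{N}$ grows exponentially with $C = \sqrt{e/a^2} > 1$ and $a > 0$ extracted from the constants. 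Finally for \eqref{it:cross entropy robust}, using the representation $D^{\mathrm{CE}}(\P|\Q) = \E_{\widetilde{\P}}[L \cdot R']$ with $L = \log(\mathrm{d}\Q/\mathrm{d}\P)$ and $R' = \mathrm{d}\Q/\mathrm{d}\widetilde{\P}$, and factorising $(\sum_i L_i)\prod_i R'_i$, independence gives
\begin{equation}
\Var_{\bigotimes \widetilde{\P}_i}\!\Bigl(\bigl(\textstyle\sum_i L_i\bigr)\prod_i R'_i \Bigr) = M(\E[R'^2])^{M-1}\E[L^2 R'^2] + M(M-1)(\E[R'^2])^{M-2}\E[LR'^2]^2 - M^2(\E[LR'])^2,
\end{equation}
whose leading order in $M$ is proportional to $(\chi^2(\Q|\widetilde{\P})+1)^M$, while $D = M \E[LR']$ is only linear in $M$, yielding the claimed exponential lower bound.

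The main obstacle is the careful asymptotic analysis of the fourth-moment quantities in \eqref{it:var robust}: one must verify that the various cross terms $a^{2M}$, $b^{4M}$, $d^M b^M$ in $\mu_4(R) - \sigma^4(R)$ are dominated by $e^M$ for large $M$ (which reduces to the strict Jensen inequalities $e > a^2$ and $a > b^2$), and to handle the regime in which $D = a^M - b^{2M}$ could itself vanish were $b^2 = a$, a degenerate case excluded by the standing assumption. Establishing the analogous dominance in \eqref{it:cross entropy robust} is comparatively routine once the $L^p$-reweighting identities have been laid out, and the robust cases \eqref{it:log var robust}--\eqref{it:relative entropy robust} are direct once the factorisation is written down.
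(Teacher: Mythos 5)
Your proposal follows essentially the same strategy as the paper: factorise the Radon--Nikodym derivative of the product measure into independent factors (respectively, independent summands in the $\log$), compute the growth rates in $M$ of both the divergence and the standard deviation of its Monte Carlo estimator via the sample-variance identity $\Var(\widehat{D}^{(N)}) = \tfrac{1}{N}\bigl(\mu_4 - \tfrac{N-3}{N-1}D^2\bigr)$, and invoke strict Jensen inequalities (enabled by the non-constancy of $\mathrm{d}\Q/\mathrm{d}\P$) to establish exponential separation in the non-robust cases. All four conclusions are reached correctly, and the building blocks (the $\mu_4$ decomposition in (1), the $\chi^2$-reweighting in (4)) match those of the paper.

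The one genuine difference is in your treatment of \eqref{it:var robust}. You phrase the argument as an asymptotic dominance claim, $\mu_4(R)\sim e^M$, and then yourself flag as ``the main obstacle'' the need to chase down the cross terms $d^M b^M$, $a^M b^{2M}$, $b^{4M}$ and to worry about $D = a^M - b^{2M}$ becoming small. The paper sidesteps all of this by proving a clean \emph{non-asymptotic} lower bound: first $D^2 \le a^{2M}$ trivially, and then the $c_r$-inequality gives $\mu_4 \ge \tfrac{1}{8}\,\mathbb{E}[R^4] - (\mathbb{E}R)^4 = \tfrac{1}{8}e^M - b^{4M}$, so that $\mu_4/D^2 \ge \tfrac{1}{8}(e/a^2)^M - (b^4/a^2)^M$ with $e/a^2 > 1$ and $b^4/a^2 < 1$ by strict Jensen. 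This lower bound holds for every $M$, so the constant $a$ in \eqref{eq:r var} need only absorb the finitely many small-$M$ cases; no asymptotic matching of cross terms is needed, and no separate argument about $D$ being small is required (the bound $D^2 \le a^{2M}$ already handles it, irrespective of whether $b^2$ is close to $a$). If you want to complete your version along your own lines you would need the chain $b^4 < ab^2 < a^2 < e$ and $db \le \sqrt{ae}\,b < e$, but the $c_r$-inequality route is appreciably shorter.

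One further minor point: in \eqref{it:relative entropy robust} you correctly obtain $\Var(\widehat{D}^{\RE,(N)}_{\P^M}) = \tfrac{M}{N}\Var_{\P}(\log\tfrac{\mathrm{d}\P}{\mathrm{d}\Q})$; the paper's displayed expression has an $M^2$ which is an apparent slip (a sum of $M$ independent terms has variance scaling as $M$, not $M^2$), though the robustness conclusion is unaffected since $r^{(N)}$ stays bounded either way.
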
  
\begin{proof}
See Appendix \ref{app:products}. 
\end{proof}
\begin{remark}
Proposition \ref{prop:robustness} suggests that the variance and cross-entropy losses perform poorly in high-dimensional settings as the relative errors \eqref{eq:r var} and \eqref{eq:r CE} scale exponentially in $M$. Numerical support can be found in Section \ref{sec:numerics}. We note that in practical scenarios we have that $\widetilde{\P} \neq \Q$ as it is not feasible to sample from the target, and hence $\sqrt{ \chi^2 (\Q \vert \widetilde{\P}) + 1} > 1$.
\end{remark}    	
    
\section{Numerical experiments}
\label{sec:numerics}
In this section we illustrate our theoretical results on the basis of numerical experiments. In Subsection \ref{sec:computational_aspects} we discuss computational details of our implementations, complementing the discussion in Section \ref{sec:ensemble averages}.  The Subsections \ref{section_LLQC} and \ref{sec: LQGC} focus on the case when the uncontrolled SDE  \eqref{eq:uncontrolled SDE} describes an Ornstein-Uhlenbeck process and the dimension is comparatively large. In Section \ref{sec:double well} we consider metastable settings (of both low and moderate dimensionality), representative of those typically encountered in rare event simulations (see Example \ref{ex:rare events}).
We rely on PyTorch as a tool for automatic differentiation and refer to the code at \url{https://github.com/lorenzrichter/path-space-PDE-solver}.    
    
\subsection{Computational aspects}
\label{sec:computational_aspects}
    
The numerical treatment of the Problems \ref{prob:optimal control}-\ref{prob:variance minimisation} using the IDO-methodology is based on the explicit loss function representations in Section \ref{sec:divergences}, together with a gradient descent scheme relying on automatic differentiation\footnote{Note that for the gradients of the process $(X_s^u)_{0 \le s \le T}$ alternative computational methods can be considered (see \cite{gobet2005sensitivity} for an overview). A numerical analysis of the approach we rely on can be found in \cite{yang1991monte}.}. Following the discussion in Section \ref{sec:ensemble averages}, a particular instance of an IDO-algorithm is determined by the choice of a loss function, and, in the case of the cross-entropy, moment and variance-type losses, by a strategy to update the control vector field $v$ in the forward dynamics (see Propositions \ref{prop:cross entropy} and \ref{prop:variance}). As mentioned towards the end of Section \ref{sec:ensemble averages}, we focus on setting $v=u$ at each gradient step, i.e. to use the current approximation as a forward control. Importantly, we do not differentiate the loss with respect to $v$; in practice this can be achieved by removing the corresponding variables from the autodifferentiation computational graph (for instance using the \texttt{detach} command in the PyTorch package). Including differentiation with respect to $v$ as well as more elaborate choices of the forward control might be rewarding directions for future research.

Practical implementations require approximations at three different stages: first, the time discretisation of the SDEs \eqref{eq:uncontrolled SDE} or \eqref{eq:controlled SDE}; second, the Monte Carlo approximation of the losses (as outlined in Section \ref{sec:ensemble averages}), or, to be precise, the approximation of their respective gradients; and third, the function approximation of either the optimal control vector field $u^*$ or the value function $V$. Moreover, implementations vary according to the choice of an appropriate gradient descent method.
    
Concerning the first point, we discretise the SDE \eqref{eq:controlled SDE} using the Euler-Maruyama scheme \cite{kloeden2013numerical} along a time grid $0 = t_0 < \dots < t_K = T$, namely iterating 
\begin{equation}
\label{discrete_controlled_SDE}
\widehat{X}^u_{n+1} = \widehat{X}^u
_n\, + \,\left(b(\widehat{X}^u_n, t_n) + \sigma(\widehat{X}^u_n, t_n) u(\widehat{X}^u_n, t_n) \right) \Delta t\, +\, \sigma(\widehat{X}^u_n, t_n) \xi_{n+1} \sqrt{\Delta t},\qquad \widehat{X}_0 = x_\text{init},
\end{equation}
where $\Delta t > 0$ denotes the step size, and $\xi_n \sim \mathcal{N}(0, I_{d \times d})$ are independent standard Gaussian random variables. 
Recall that the initial value can be random rather than deterministic (see Remark \ref{rem:randomisation}). We demonstrate the potential benefit of sampling $\widehat{X}_0$ from a given density in Section \ref{sec: LQGC}.

We next discuss the approximation of $u^*$. First, note that a viable and straightforward alternative is to instead approximate $V$ and compute $u^* = - \sigma^\top \nabla V$ whenever needed (for instance by automatic differentiation), see \cite{raissi2018forward}. However, this approach has performed slightly worse in our experiments, and, furthermore, $V$ can be recovered from $u^* $ by integration along an appropriately chosen curve.
To approximate $u^*$, a classic option is a to use a Galerkin truncation, i.e. a linear combination of ansatz functions
\begin{equation}
u(x, t_n) = \sum_{m=1}^M \theta_m^n \alpha_m(x),
\end{equation}
for $n \in \{0, \dots, K-1\}$ with parameters $\theta_m^n \in \R$. Choosing an appropriate set $\{ \alpha_m \}_{m=1}^M$ is crucial for algorithmic performance -- a task that in high-dimensional settings requires detailed a priori knowledge about the problem at hand. Instead, we focus on approximations of $u^*$  realised by neural networks.
    
\begin{definition}[Neural networks]
\label{def_NN}
We define a standard \textit{feed-forward neural network} $\Phi_\varrho:\R^k \to \R^m$ by
\begin{equation}
\Phi_\varrho(x) = A_L \varrho(A_{L-1} \varrho(\cdots  \varrho(A_1 x + b_ 1) \cdots) + b_{L-1}) + b_L,
\end{equation}
with matrices $A_l \in \R^{n_{l} \times n_{l-1}}$, vectors $b_l \in \R^{n_l}, 1 \le l \le L$, and a nonlinear activation function $\varrho: \R \to \R$ that is to be applied componentwise.
We further define the \textit{DenseNet} \cite{weinan2018deepRitz, huang2017} containing additional skip connections,
\begin{equation}
\Phi_\varrho(x) = A_L x_L + b_L,
\end{equation}
where $x_{L}$ is defined recursively by
\begin{align}
y_{l+1} = \varrho(A_l x_l + b_l), \qquad 
x_{l+1} = (x_l, y_{l+1})^\top,
\end{align}
with $A_l \in \R^{n_l \times \sum_{i=0}^{l-1} n_i}, b_l \in \R^l$ for $1 \le l \le L-1$ and $x_1 = x$, $n_0 = d$. In both cases the collection of matrices $A_l$ and vectors $b_l$ comprises the learnable parameters $\theta$.
\end{definition}
    
Neural networks are known to be universal function approximators \cite{cybenko1989approximation, hornik1989multilayer}, with recent results indicating favourable properties in high-dimensional settings \cite{elbrachter2018dnn, eldan2016power, grohs2018proof, petersen2018optimal, schwab2019deep}. The control $u$ can be represented by either $u(x,t) = \Phi_\varrho(y)$ with $y=(x,t)^\top$, i.e. using one neural network for both the space and time dependence, or by $u(x,t_n) = \Phi^n_\varrho(x)$, using one neural network per time step. The former alternative led to better performance in our experiments, and the reported results rely on this choice. For the gradient descent step we either choose SGD with constant learning rate \cite[Algorithm 8.1]{goodfellow2016deep} or Adam \cite[Algorithm 8.7]{goodfellow2016deep}, \cite{kingma2014adam}, a variant that relies on adaptive step sizes and momenta. Further numerical investigations on network architectures and optimisation heuristics can be found in \cite{chan2019machine}. 

To evaluate algorithmic choices we monitor the following two performance metrics:
\begin{enumerate}
\item The \emph{importance sampling relative error}, namely
\begin{equation}
\label{eq: importance sampling relative error}
\delta(u) :=
\frac{\sqrt{\operatorname{Var}\left( e^{-\mathcal{W}(X^u)} \frac{\mathrm d \P}{\mathrm d \P^u} \right)}}{\E[e^{-\mathcal{W}(X)}]},
\end{equation}
where $u$ is the approximated control in the corresponding iteration step. This quantity is zero if and only if $u = u^*$ (cf. Theorem \ref{thm:connections}) and measures the quality of the control in terms of the objective introduced in Problem \ref{prob:variance minimisation}. Since its Monte Carlo version fluctuates heavily if $u$ is far away from $u^*$ we usually estimate this quantity with additional samples not being used in the gradient computation.
\item An \emph{$L^2$-error}, 
\begin{equation}
{\E}\left[ \int_0^T |u - u^*_\text{ref}|^2(X^u_s, s) \, \mathrm ds \right],
\end{equation}
where $u^*_\text{ref}$ is computed either analytically or using a finite difference scheme for the HJB-PDE \eqref{eq:HJB}. This quantity is more robust w.r.t. deviations from $u^*$ and therefore we compute the Monte Carlo estimator using just the samples from the training iteration.
\end{enumerate}

\subsection{Ornstein-Uhlenbeck dynamics with linear costs}
\label{section_LLQC}
    
Let us consider the controlled Ornstein-Uhlenbeck process
\begin{equation}
\label{Ornstein-Uhlenbeck}
\mathrm dX_s^u = \left(AX_s^u + B u(X_s^u, s)\right) \mathrm d s + B \,\mathrm d W_s, \quad X_0^u = 0,
\end{equation}
where $A,B \in \R^{d \times d}$. Furthermore, we assume zero running costs, $f = 0$, and linear terminal costs $g(x) = \gamma \cdot x$, for a fixed vector $\gamma \in \R^d$. As shown in Appendix \ref{OU_linear_solution}, the optimal control is given by
\begin{equation}
u^*(x, t) = -B^\top e^{A^\top (T-t)}\gamma,
\end{equation}
which remarkably does not depend on $x$. Therefore, not only the variance and log-variance losses are robust at $u^*$ in the sense of Definition \ref{def:robust u}, but also the relative entropy loss, according to \eqref{eq:variance RE loss} in Proposition \ref{prop:robustness at u}.
    
We choose $A = -I_{d \times d} + (\xi_{ij})_{1\le i,j \le d}$ and  $B = I_{d \times d} + (\xi_{ij})_{1\le i,j \le d}$, where $\xi_{ij} \sim \mathcal{N}(0, \nu^2)$ are sampled i.i.d. once at the beginning of the simulation. Note that this choice corresponds to a small perturbation of the product setting from Section \ref{sec:products}. We set $T = 1, \nu = 0.1$, $\gamma = (1, \dots, 1)^\top$ and as function approximation take the DenseNet from Definition \ref{def_NN} using two hidden layers, each with a width of $n_1 = n_2 = 30$, and $\varrho = \max(0, x)$ as the nonlinearity. Lastly, we choose the Adam optimiser as a gradient descent scheme. Figure \ref{LLGC_d1_loss_log} shows the algorithm's performance for $d = 1$ with batch size $N = 200$, learning rate $\eta = 0.01$ and step size $\Delta t = 0.01$. We observe that log-variance, relative entropy and moment loss perform similarly and converge well to a suitable approximation. The cross-entropy loss decreases, but at later gradient steps fluctuates more than the other losses (we note that the fluctuations appear to be less pronounced when using SGD, however at the cost of substantially slowing down the overall speed of convergence). The inferior quality of the control obtained using the cross-entropy loss may be explained by its non-robustness at $u^*$, see Proposition \ref{prop:robustness at u}.
\begin{figure}[H]
\centering
\includegraphics[width=0.9\linewidth]{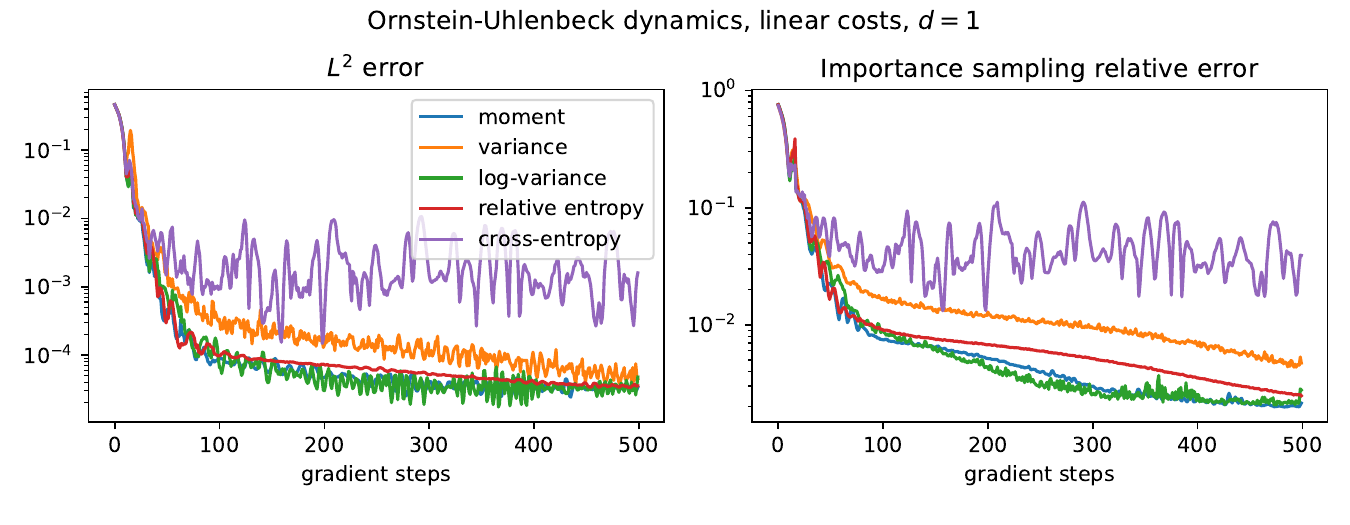}
\caption{Performance of the algorithm using five different loss functions according to the metrics introduced in Section \ref{sec:computational_aspects} as a function of the iteration step.}
\label{LLGC_d1_loss_log}
\end{figure}
    
Figure \ref{LLGC_d40_loss_log} shows the algorithm's performance in a high-dimensional case, $d = 40$, where we now choose $N = 500$ as the batch size, $\eta = 0.001$ as the learning rate, $\Delta t = 0.01$ as the time step, and as before rely on a DenseNet with two hidden layers. We observe that relative entropy loss and log-variance loss perform best, and that the moment  and cross-entropy losses converge at a significantly slower rate. The variance loss is numerically unstable and hence not represented in Figure \ref{LLGC_d40_loss_log}. We encounter similar problems in the subsequent experiments and thus do not consider the variance loss in what follows. In Figure \ref{LLGC_d40_u_approx} we plot some of the components of the $40$-dimensional approximated optimal control vector field as well as the analytic solution $u_{\mathrm{ref}}^*(x, t)$ for a fixed value of $x$ and varying time $t$, showcasing the inferiority of the approximation obtained using the cross-entropy loss. The comparatively poor performance of the cross-entropy and the variance losses can be attributed to their non-robustness with respect to tensorisations, see Section \ref{sec:products}.
To further illustrate these results, Figure \ref{rel-error-dimension} displays the relative error associated to the loss estimators computed from $N = 15\cdot 10^6$ samples in different dimensions. The dimensional dependence agrees with what is expected from Proposition \ref{prop:robustness}, but we note that our numerical experiment goes beyond the product case.
    
\begin{figure}[H]
\centering
\includegraphics[width=0.9\linewidth]{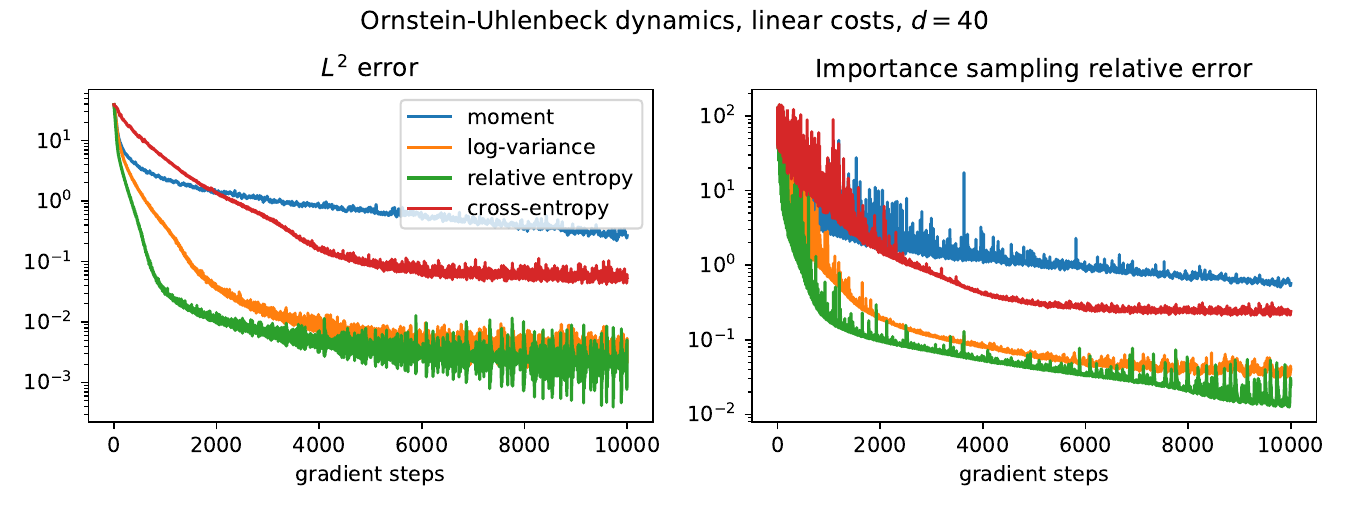}
\caption{Performance of the algorithm using four different loss functions in a high-dimensional setting.}
\label{LLGC_d40_loss_log}
\end{figure}
    
\begin{figure}[H]
\centering
\includegraphics[width=0.9\linewidth]{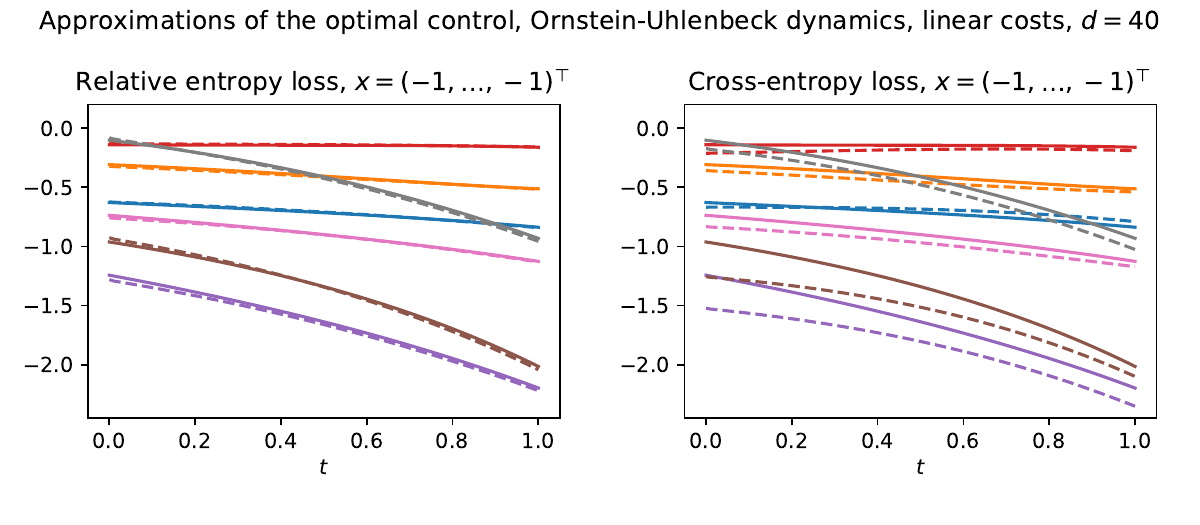}
\caption{Approximation $u$ (dashed lines) and reference solution $u^*_\text{ref}$ (straight lines) for the optimal control obtained using the relative entropy and cross-entropy losses, respectively. $7$ out of the $40$ components of $u$ and $u^*_\text{ref}$ are plotted.}
\label{LLGC_d40_u_approx}
\end{figure}
    
\begin{figure}[H]
\centering
\includegraphics[width=0.40\linewidth]{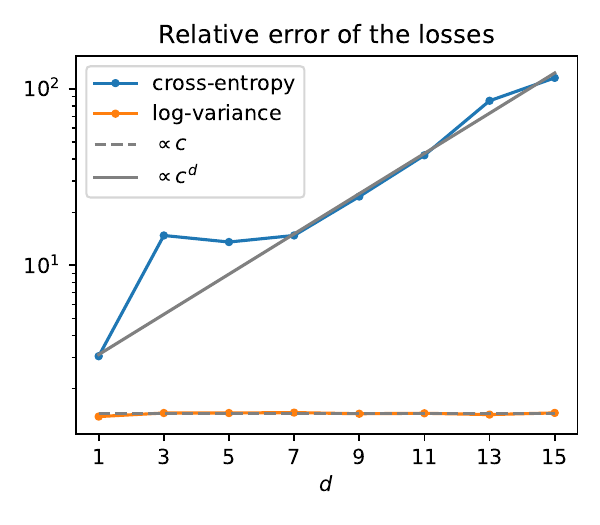}
\caption{Relative error of the log-variance and cross-entropy losses depending on the dimension.}
\label{rel-error-dimension}
\end{figure}

Lastly, let us investigate the effect of the additional parameter $y_0$ in the moment loss. For a first experiment, we initialise $y_0$ with either the naive choice $y_0^{(1)} = 0$, or $y_0^{(2)} = 10$, a starting value which differs considerably from $-\log \mathcal{Z}$ or the optimal choice $y_0^{(3)} = - \log \mathcal{Z} \approx -5.87$. Let us insist that in practical scenarios the value of $-\log \mathcal{Z}$ is usually not known. 
Additionally, we contrast using Adam and SGD as an optimisation routine -- in both cases we choose $N = 200$, $\eta = 0.01$, $\Delta t = 0.01$,  and the same DenseNet architecture as in the previous experiments. 
    
Figure \ref{LLGC_d20_adam_SGD} shows that the initialisation of $y_0$ can have a significant impact on the convergence speed. Indeed, 
with the initialisation $y_0 = -\log \mathcal{Z}$,  the moment and  log-variance losses perform very similarly, in accordance with Proposition \ref{prop: equivalence moment log-variance}. In contrast, choosing the initial value $y_0$ such that the discrepancy $|y_0 +\log \mathcal{Z}|$ is large incurs a much slower convergence.
    
Comparing the two plots in Figure \ref{LLGC_d20_adam_SGD} shows that the Adam optimiser achieves a much faster convergence overall in comparison to SGD. Moreover, the difference in performance between $y_0$-initialisations is more pronounced when the Adam optimiser is used. The observations in these experiments are in agreement with those in \cite{chan2019machine}.
    
\begin{figure}[H]
\centering
\includegraphics[width=0.90\linewidth]{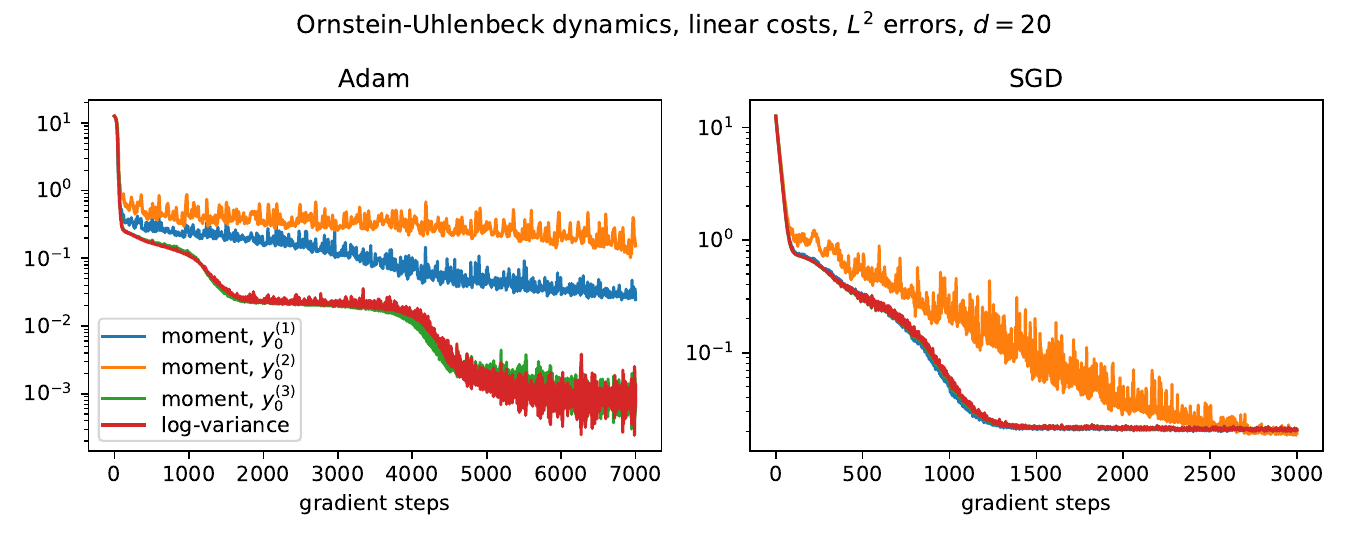}
\caption{Performance of the algorithm with the moment loss and different initialisations for $y_0$, using Adam and SGD.}
\label{LLGC_d20_adam_SGD}
\end{figure}
    
\subsection{Ornstein-Uhlenbeck dynamics with quadratic costs}
\label{sec: LQGC}
We consider the Ornstein-Uhlenbeck process described by \eqref{Ornstein-Uhlenbeck} with quadratic running and terminal costs, i.e. $f(x, s) = x^\top P x$ and $g(x) = x^\top R x$, with $P,R \in \R^{d \times d}$. This setting is known as the \textit{linear quadratic Gaussian control} problem \cite{vanHandel2007}. The optimal control is given by \cite[Section 6.5]{vanHandel2007}
\begin{align}
\label{eq:LQGC solution}
u^*(x, t) &=  -2 B_t^\top F_t x,
\end{align}
where the matrices $F_t$ fulfill the matrix Riccati equation
\begin{align}
\frac{\mathrm d}{\mathrm d t}F_t + A^\top_t F_t + F_t A_t - 2 F_tB_tB_t^\top F_t + P = 0,\qquad F_T = R.
\end{align}
In this example, we demonstrate an approach leveraging a priori knowledge about the structure of the solution.
Motivated by \eqref{eq:LQGC solution}, we consider the linear ansatz functions
\begin{equation}
u(x, t_n) = \Xi_n x,
\end{equation}
where the entries of the matrices $\Xi_n \in \R^{d \times d}$, $n = 0,\ldots, K-1$ represent the parameters to be learnt. The matrices $A$ and $B$ are chosen as in Subsection \ref{section_LLQC} and we set $P = \frac{1}{2} I_{d\ \times d}$, $R = I_{d \times d}$ and $T=0.5$. Figure \ref{LQGC_d10_loss_log_SGD_adam} shows the performance using Adam with learning rate $\eta = 0.001$ and SGD with learning rate $\eta = 0.01$, respectively. 
The relative entropy losses converges fastest, followed by the log-variance loss. The convergence of the cross-entropy loss is significantly slower, in particular in the SGD case. We also note that the cross-entropy loss diverges if larger learning rates are used. These findings are in line with the results from Proposition \ref{prop:robustness}. When SGD is used, the moment loss experiences fluctuations in later gradient steps. This can be explained by the fact that the moment loss is robust at $u^*$ only if $y_0 = - \log \mathcal{Z}$ is satisfied exactly (see Proposition \ref{prop: equivalence moment log-variance}).
\par\bigskip

Let us illustrate the potential benefit of sampling $X_0$ from a predescribed density (see Remark \ref{rem:randomisation}), here $X_0 \sim \mathcal{N}(0, I_{d \times d})$. The overall convergence is hardly affected and the $L^2$ error dynamics agrees qualitatively with the one shown in Figure \ref{LQGC_d10_loss_log_SGD_adam}. However, the approximation is more accurate at initial time $t=0$, see Figure \ref{LQGC_X_0_random}. This phenomenon appears to be particularly pronounced in this example, as independent ansatz functions are used at each time step.
    
\begin{figure}[H]
\centering
\includegraphics[width=0.90\linewidth]{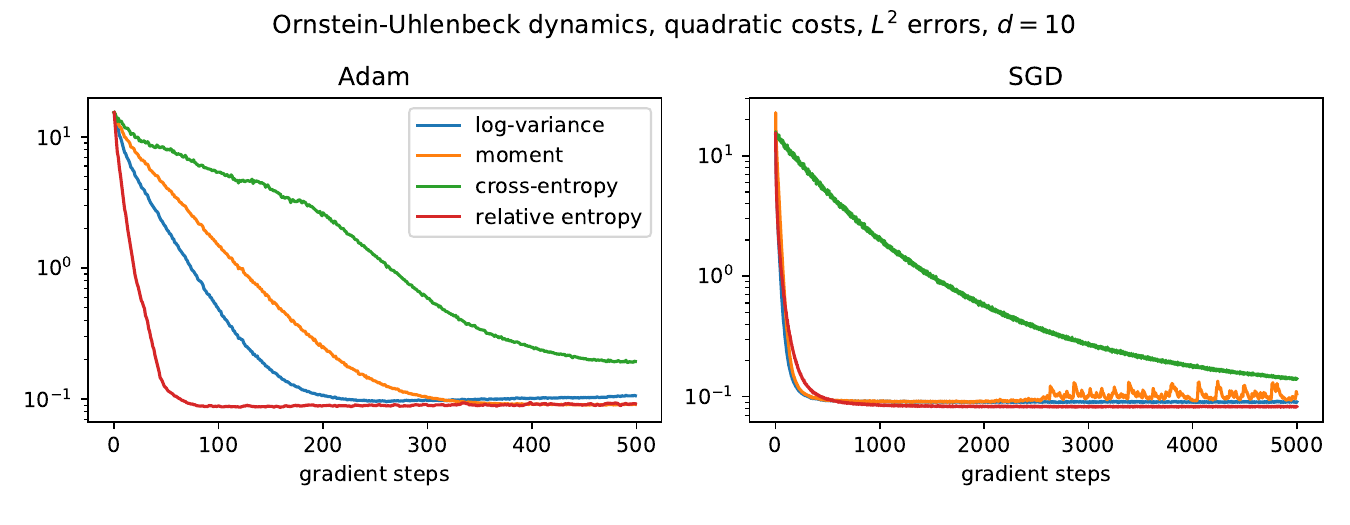}
\caption{Performance of the losses for the Ornstein-Uhlenbeck process with quadratic costs, using Adam and SGD.}
\label{LQGC_d10_loss_log_SGD_adam}
\end{figure}
    
\begin{figure}[H]
\centering
\includegraphics[width=0.90\linewidth]{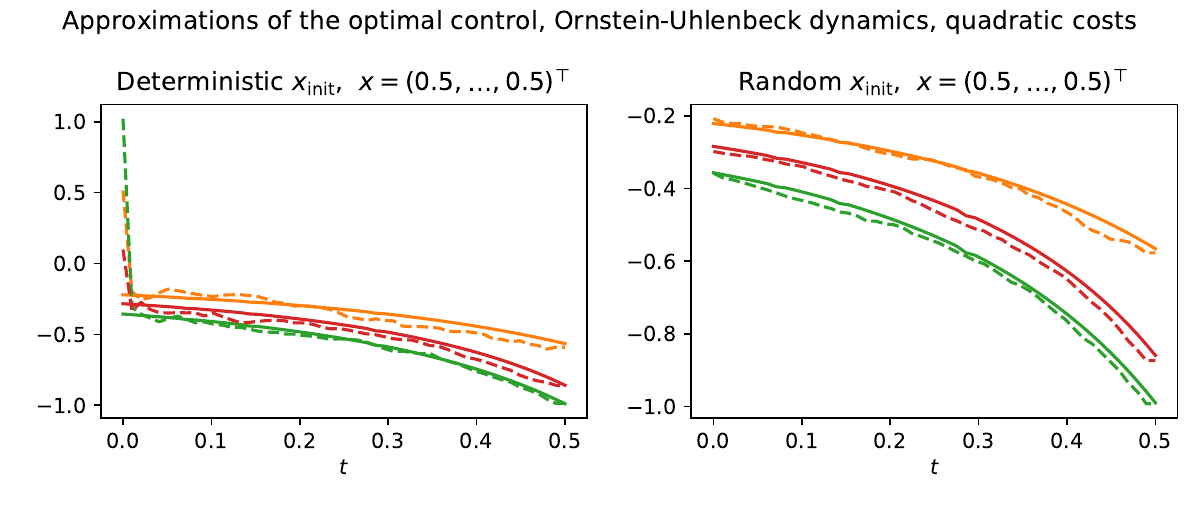}
\caption{Approximation and reference solution of the optimal control with either deterministic or random initialisations of $x_\text{init}$. Three components of $u$ and $u_\text{ref}^*$ are plotted.}
\label{LQGC_X_0_random}
\end{figure}
    
\subsection{Metastable dynamics in low and high dimensions}
\label{sec:double well}
We now come back to the double well potential from Example \ref{ex:rare events} and consider the SDE
\begin{equation}
\label{SDE_double_well}
\mathrm dX_s = -\nabla \Psi(X_s) \, \mathrm ds + B \, \mathrm dW_s, \quad X_0 = x_\text{init},
\end{equation}
where $B \in \R^{d \times d}$ is the diffusion coefficient,  $\Psi(x) = \sum_{i=1}^d \kappa_i(x_i^2-1)^2$ is the potential (with $\kappa_i > 0$ being a set of parameters) and $x_\text{init} = (-1, \dots, -1)^\top$ is the initial condition. We consider zero running costs, $f = 0$, terminal costs $g(x) = \sum_{i=1}^d \nu_i (x_i-1)^2$, where $\nu_i > 0$, and a terminal time $T=1$. Recall from Example \ref{ex:rare events} that choosing higher values for $\kappa_i$ and $\nu_i$ accentuates the metastable features, making sample-based   estimation of $ \E\left[\exp(-g(X_T))\right]$
more challenging. For an illustration, Figure \ref{fig: double well illustration} shows the potential $\Psi$ and the weight at final time $e^{-g}$ (see \eqref{eq:reweighted measure}), for different values of $\nu$ and $\kappa$, in dimension $d=1$ and for $B=1$. We furthermore plot the `optimally tilted potentials' $\Psi^* = \Psi + BB^\top V$, noting that $-\nabla \Psi^* = -\nabla \Psi + Bu^*$. Finally, the right-hand side shows the gradients $\nabla u^*$ at initial time $t=0$.
\begin{figure}[H]
\centering
\includegraphics[width=1.0\linewidth]{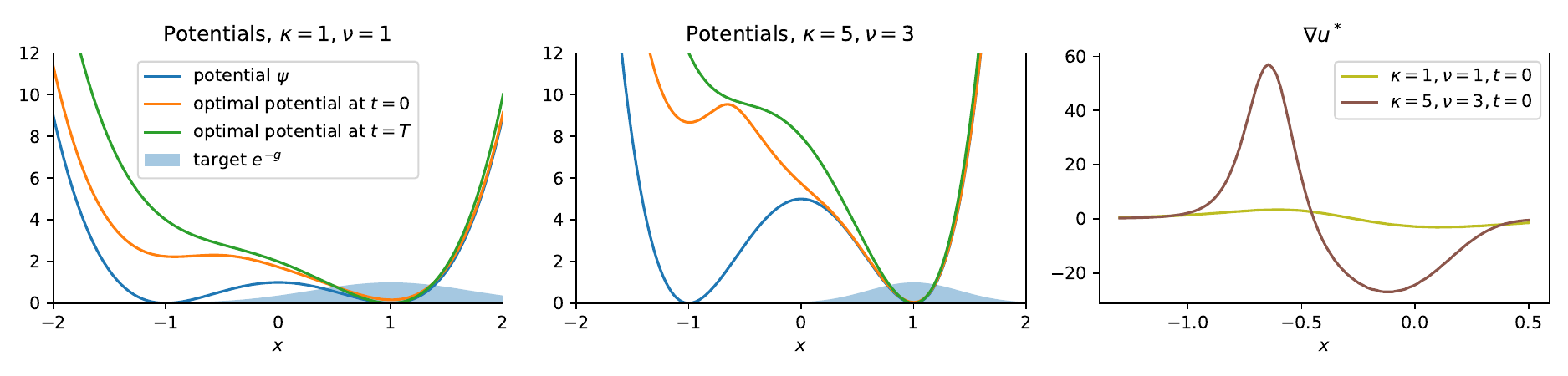}
\caption{The double well potential and  the weight $e^{-g}$, for different values of $\kappa$ and $\nu$ as well as optimal controls (inducing `tilted potentials') and their gradients.}
\label{fig: double well illustration}
\end{figure}
    
For an experiment, let us first consider the one-dimensional case, choosing $B = 1$, $\kappa = 5$ and $\nu = 3$. In this setting the relative error associated to the standard Monte Carlo estimator, i.e. the estimator version of \eqref{eq: importance sampling relative error}, which we denote by $\widehat{\delta}$, is roughly $\widehat{\delta}(0) = 63.86$ for a batch size of $N = 10^7$ trajectories, from which only about $2 \cdot 10^3$ (i.e. 0.02\%) cross the barrier. Given that  $e^{-g}$ is supported mostly in the right well, the optimal control $u^*$ steers the dynamics across the barrier. Using an approximation of $u^*$ obtained by a finite difference scheme, we achieve a relative error of $\widehat{\delta}(u^*) = 1.94$ (the theoretical optimum being zero, according to Theorem \ref{thm:connections}) and a crossing ratio of approximately 87.28\%.

To run IDO-based algorithms, we use the standard feed-forward neural network (see Definition \ref{def_NN}) with the activation function $\varrho = \tanh$ and choose $\Delta t = 0.005$, $\eta = 0.05$. We try batch sizes of $N = 50$ and $N = 1000$ and plot the training progress in Figures \ref{double_well_d_1_K_300} and \ref{double_well_d_1_K_1000}, respectively.
In Figure \ref{double_well_1d_approx} we display the approximation obtained using the log-variance loss and compare with the reference solution $u^*_{\mathrm{ref}}$.

\begin{figure}[H]
\centering
\includegraphics[width=0.90\linewidth]{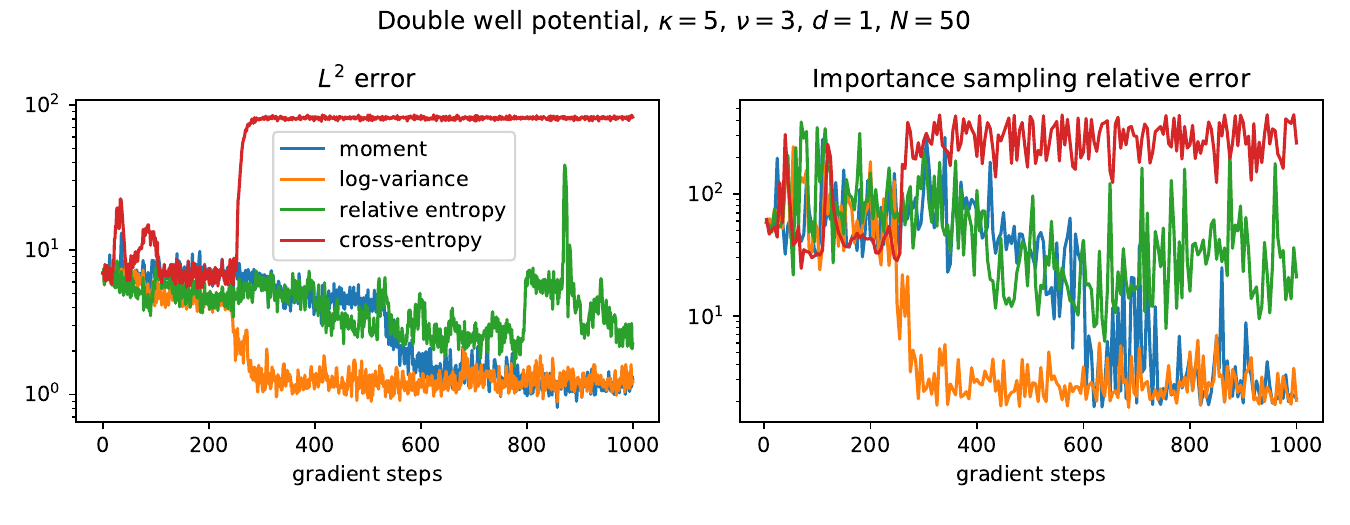}
\caption{Training iterations for the one-dimensional metastable double well example for a small batch size.}
\label{double_well_d_1_K_300}
\end{figure}
    
\begin{figure}[H]
\centering
\includegraphics[width=0.90\linewidth]{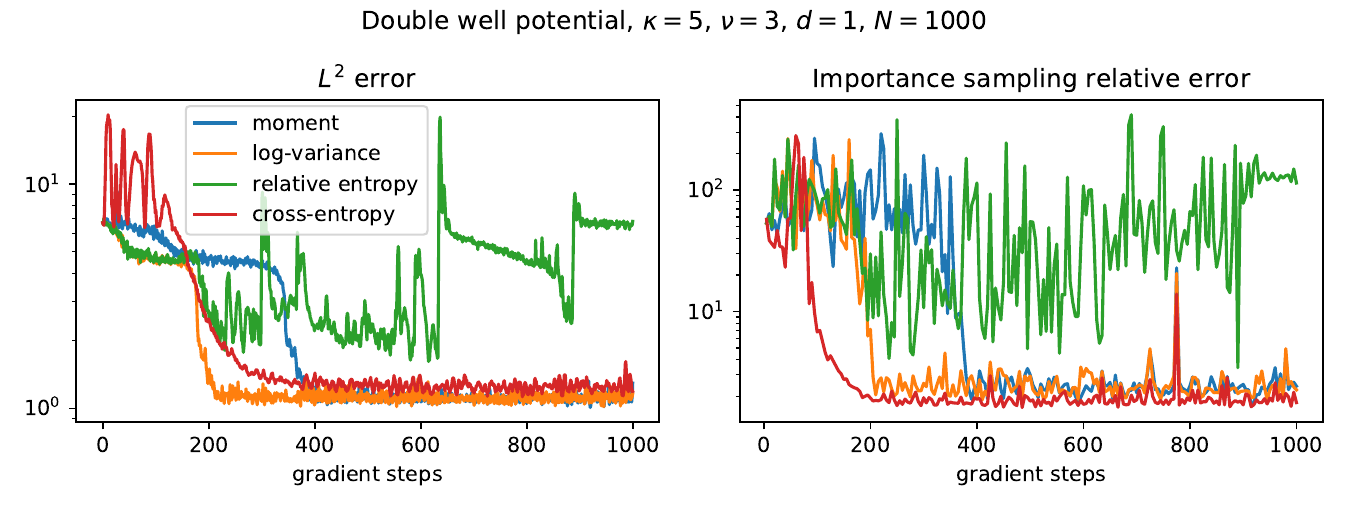}
\caption{Training iterations for the one-dimensional metastable double well example for a large batch size.}
\label{double_well_d_1_K_1000}
\end{figure}

It can be observed that the log-variance and moment losses perform well with both batch sizes, with the log-variance loss however achieving a satisfactory approximation with fewer gradient steps. The cross-entropy loss appears to work well only if the batch size is sufficiently large. We attribute this observation to the non-robustness at $u^*$ (see Proposition \ref{prop:robustness at u}) and, tentatively, to the exponential factor appearing in \eqref{eq:CE2}, see Remark \ref{remark_CE_additional_v}. 

The optimisation using the relative entropy loss is frustrated by instabilities in the vicinity of the solution $u^*$. In order to further investigate this aspect we numerically compute the variances of the gradients and the associated relative errors with respect to the mean, using $50$ realisations at each gradient step.
Figure \ref{rel_error_gradients_ma} shows the averages of the relative errors and variances over weights in the network\footnote{In order to lessen the impact of Monte Carlo errors and numerical instabilities, we take moving averages comprising $30$ gradient steps and discard partial derivatives with an average magnitude of less than $0.01$. We note that the plateaus present in Figure \ref{rel_error_gradients_ma} are an artefact due to the moving averages, but insist that this procedure does not alter the main results in a qualitative way.}, confirming that the gradients associated to the log-variance loss have significantly lower variances.  
This phenomenon is in accordance with Proposition \ref{prop:robustness at u} (in particular noting that  $|\nabla u^*|^2$ is expected to be rather large in a metastable setting, see Figure \ref{fig: double well illustration}) and explains the unsatisfactory behaviour of the relative entropy loss observed in Figures \ref{double_well_d_1_K_300} and \ref{double_well_d_1_K_1000}.
    
\begin{figure}[H]
\centering
\includegraphics[width=0.90\linewidth]{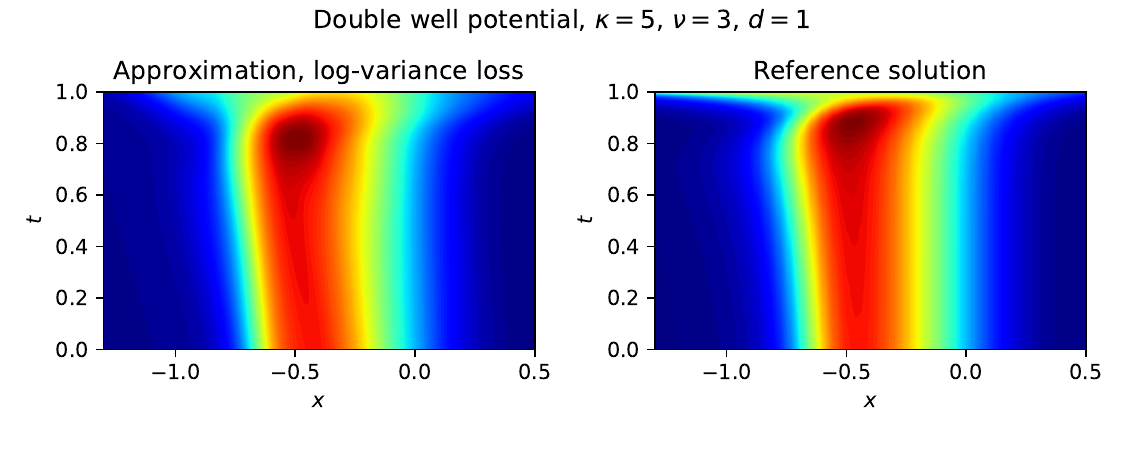}
\caption{Approximation and reference solution for the double well control problem in $d=1$.}
\label{double_well_1d_approx}
\end{figure}

\begin{figure}[H]
\centering
\includegraphics[width=1.0\linewidth]{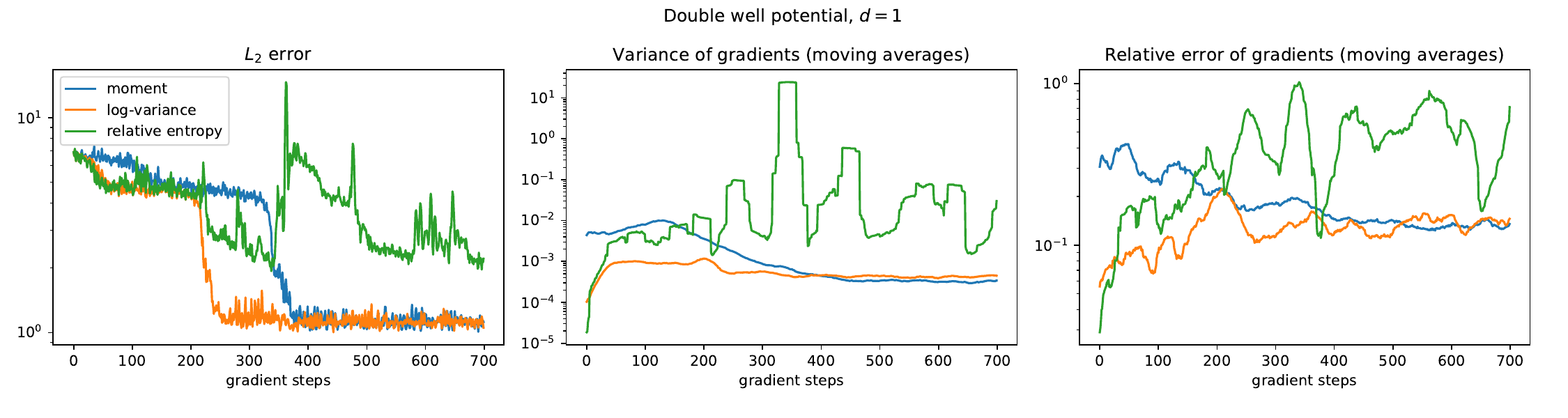}
\caption{We display the $L_2$ error pertaining to the one-dimensional double well experiment, along with the estimated averages of the variances and relative errors of the gradients along the training iterations for different losses.}
\label{rel_error_gradients_ma}
\end{figure}
    
Let us now consider the multidimensional setting, namely $d=10$, where the dynamics exhibits `highly' metastable characteristics in 3 dimensions and `weakly' metastable characteristics in the remaining 7 dimensions. 
To be precise, we set $\kappa_i = 5$, $\nu_i = 3$ for $i \in \{1, 2, 3\}$ and $\kappa_i = 1$, $\nu_i = 1$ for $i \in \{4, \dots, 10\}$. Moreover, we choose the diffusion coefficient to be $B = I_{d \times d}$ and conduct the experiment with a batch size of $N=500$. 
    
In Figure \ref{double_well_d_10} we see that only the log-variance loss achieves a reasonable approximation. Interestingly, the training progresses in stages, successively overcoming the potential barriers in the highly metastable directions. 
On the right-hand side we display the components of the approximated optimal control associated to one highly and one weakly metastable direction, for fixed $t=0$. We observe that the approximation is fairly accurate, and that comparatively large control forces are needed to push the dynamics over the highly metastable potential barrier.
     
\begin{figure}[H]
\centering
\includegraphics[width=0.90\linewidth]{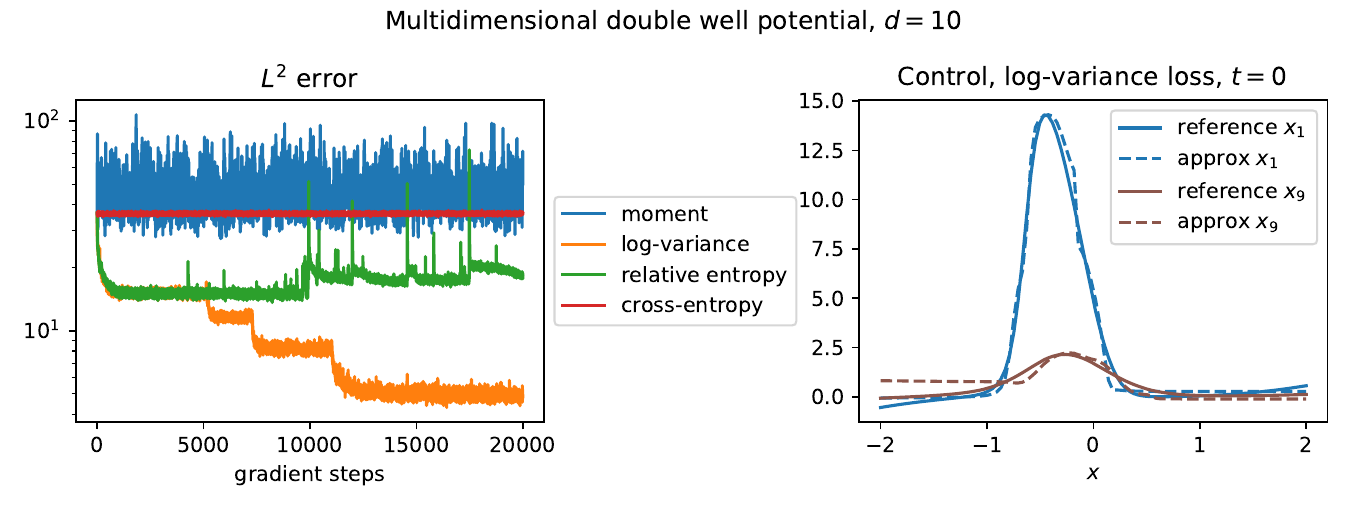}
\caption{Training iterations for the multidimensional metastable double well along with the approximated solution using the log-variance loss, from which we plot two components.
}
\label{double_well_d_10}
\end{figure}

\section{Conclusion and outlook}
\label{sec:outlook}
Motivated by the observation that optimal control of diffusions can be phrased in a number of different ways, we have provided a unifying framework based on divergences between path measures, encompassing various existing numerical methods in the class of IDO algorithms. In particular, we have shown that the novel log-variance divergences are closely connected to forward-backward SDEs. We have furthermore shown a fundamental equivalence between approaches based on the $\mathrm{KL}$-divergence and the log-variance divergences.

Turning to the variance of Monte Carlo gradient estimators, we have defined and studied two notions of stability -- robustness under tensorisation and robustness at the optimal control solution. Of the losses and estimators under consideration, only the log-variance loss is stable in both senses, often resulting in superior numerical performance. The consequences of robustness and non-robustness as defined have been exemplified by extensive numerical experiments.

The results presented in this paper can be extended in various directions. First, it would be interesting to consider other divergences on path space and construct and study the ensuing algorithms. In this respect, we may also mention the development of more elaborate schemes to update the control for the forward dynamics. Second, one may attempt to generalise the current framework to other types of control problems and PDEs (for instance to elliptic PDEs and hitting time problems as considered in \cite{hartmann2014characterization,hartmann2019variational,hartmann2012efficient,hartmann2018importance}, or to the Schr{\"o}dinger problem as discussed in \cite{reich2019data}).
Deeper understanding of the design of IDO algorithms could be achieved by extending our stability analysis beyond the product case and for controls that differ greatly from the optimal one. In particular, advances in this direction might help to develop more sophisticated variance reduction techniques. Finally, we envision applications of the log-variance divergences in other settings.
\par\bigskip
\textbf{Acknowledgements.}
This research has been funded by 
Deutsche Forschungsgemeinschaft (DFG) through the grant 
CRC 1114 \lq Scaling Cascades in Complex Systems\rq \,(projects A02 and A05, project number 235221301). We would like to thank Carsten Hartmann and Wei Zhang for many very useful discussions. We thank the referees for their useful comments and suggestions that have led to various improvements in the presentation of this paper.

\appendix
    
\section{Appendix}
\subsection{Proofs for Section \ref{sec:divergences}} 
\label{app:divergences}
    
The Radon-Nikodym derivatives appearing in the divergences defined in Section \ref{sec:divergences} can be computed explicitly:
\begin{lemma}
\label{lem:Girsanov}
For $u \in \mathcal{U}$, the measures $\mathbb{P}$ and $\mathbb{P}^u$ are equivalent. Moreover, the Radon-Nikodym derivative satisfies
\begin{equation}
\label{eq:Pu P}
\frac{\mathrm d \P^{u}}{\mathrm d\P}(X) = \exp \left( \int_0^T \left(u^\top  \sigma^{-1}\right)(X_s, s) \cdot \mathrm dX_s - \int_0^T (\sigma^{-1} b \cdot u)(X_s, s) \,\mathrm ds - \frac{1}{2} \int_0^T |u(X_s, s)|^2 \, \mathrm ds \right)
\end{equation}
\end{lemma}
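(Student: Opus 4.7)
The plan is to realise the change of measure at the level of the underlying probability space $(\Omega, \mathcal{F}, \Theta)$ via Girsanov's theorem and then push forward to path space, mirroring the argument used in the proof of Theorem \ref{thm:connections}, item \eqref{it:measure}. Concretely, let $X$ denote the unique strong solution of the uncontrolled SDE \eqref{eq:uncontrolled SDE} on $(\Omega, \mathcal{F}, \Theta)$ with driving Brownian motion $W$, so that the law of $X$ is $\mathbb{P}$. Define the candidate density
\begin{equation}
Z_T = \exp\left( \int_0^T u(X_s, s) \cdot \mathrm{d}W_s - \tfrac{1}{2} \int_0^T |u(X_s, s)|^2\, \mathrm{d}s \right)
\end{equation}
and the equivalent probability measure $\mathrm{d}\widetilde{\Theta} = Z_T\, \mathrm{d}\Theta$.

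Once it is verified that $Z_T$ is a genuine probability density (i.e.\ that the stochastic exponential is a true martingale on $[0,T]$), Girsanov's theorem guarantees that $\widetilde{W}_t := W_t - \int_0^t u(X_s, s)\, \mathrm{d}s$ is a Brownian motion under $\widetilde{\Theta}$. Rewriting \eqref{eq:uncontrolled SDE} as $\mathrm{d}X_s = (b + \sigma u)(X_s, s)\, \mathrm{d}s + \sigma(X_s, s)\, \mathrm{d}\widetilde{W}_s$ then exhibits $X$, considered on $(\Omega, \mathcal{F}, \widetilde{\Theta})$, as a strong solution to the controlled SDE \eqref{eq:controlled SDE}. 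By uniqueness in law, the law of $X$ under $\widetilde{\Theta}$ is $\mathbb{P}^u$. Hence for any bounded measurable $F: \mathcal{C} \to \mathbb{R}$,
\begin{equation}
\int_{\mathcal{C}} F\, \mathrm{d}\mathbb{P}^u = \mathbb{E}_{\widetilde{\Theta}}[F(X)] = \mathbb{E}_{\Theta}[F(X)\, Z_T],
\end{equation}
which in particular shows $\mathbb{P}^u \sim \mathbb{P}$ and identifies $\tfrac{\mathrm{d}\mathbb{P}^u}{\mathrm{d}\mathbb{P}}(X) = Z_T$ almost surely. The stated form of the density then follows by substituting the identity $\mathrm{d}W_s = \sigma^{-1}(X_s, s)(\mathrm{d}X_s - b(X_s, s)\, \mathrm{d}s)$, valid on the support of $\mathbb{P}$ thanks to Assumption \ref{ass:SDE coefficients} (ellipticity of $\sigma\sigma^\top$ gives invertibility of $\sigma$), which transforms $\int_0^T u \cdot \mathrm{d}W$ into $\int_0^T (u^\top \sigma^{-1})(X_s, s) \cdot \mathrm{d}X_s - \int_0^T (\sigma^{-1} b \cdot u)(X_s, s)\, \mathrm{d}s$.

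The main technical obstacle is verifying that $(Z_t)_{0 \le t \le T}$ is a true martingale, since $u \in \mathcal{U}$ is only assumed to be of linear growth rather than bounded (in contrast to the argument for Theorem \ref{thm:connections}, where boundedness of $u^*$ made Novikov's condition immediate). The strategy here is a localisation argument: introduce stopping times $\tau_n = \inf\{s \ge 0 : |X_s| \ge n\} \wedge T$, note that on $\{s \le \tau_n\}$ the integrand $|u(X_s, s)|^2$ is bounded by $C(1+n)^2$ so that Novikov's criterion applies to the stopped exponential, and then pass to the limit $n \to \infty$ using standard moment bounds on $\sup_{s \in [0,T]} |X_s|$ that follow from the linear growth of $b$ and Assumption \ref{ass:SDE coefficients}. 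Standard sufficient conditions of Benes type (see e.g.\ \cite[Proposition 5.3.10]{karatzas1991brownian}) could alternatively be invoked, since the combined drift $b + \sigma u$ grows at most linearly. With the martingale property secured, the rest of the argument is a direct application of Girsanov and a stochastic-calculus substitution.
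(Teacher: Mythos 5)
Your argument is correct but takes a genuinely different route from the paper's. The paper factors through the driftless reference measure $\mathbb{P}_{\mathrm{W}}$ (the law of $\mathrm{d}X_s = \sigma(X_s,s)\,\mathrm{d}W_s$), quotes the densities $\mathrm{d}\mathbb{P}/\mathrm{d}\mathbb{P}_{\mathrm{W}}$ and $\mathrm{d}\mathbb{P}^u/\mathrm{d}\mathbb{P}_{\mathrm{W}}$ from \cite[Theorem 2.4.2]{ustunel2013transformation}, and recovers \eqref{eq:Pu P} as the ratio $\frac{\mathrm{d}\mathbb{P}^u}{\mathrm{d}\mathbb{P}_{\mathrm{W}}}\cdot\frac{\mathrm{d}\mathbb{P}_{\mathrm{W}}}{\mathrm{d}\mathbb{P}}$ after simplification. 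You instead compute $\mathrm{d}\mathbb{P}^u/\mathrm{d}\mathbb{P}$ directly on the abstract space: tilt $\Theta$ by the exponential martingale $Z_T$, use Girsanov to recognise $X$ under the tilted measure as a solution of the controlled SDE (so its law is $\mathbb{P}^u$ by uniqueness), and then rewrite $\int_0^T u\cdot\mathrm{d}W$ via $\mathrm{d}W_s = \sigma^{-1}(X_s,s)(\mathrm{d}X_s - b(X_s,s)\,\mathrm{d}s)$ --- the step that, importantly, exhibits $Z_T$ as a functional of the observed path and hence as a legitimate Radon--Nikodym density for the pushforward. Both arguments hinge on the same technical point, namely the true-martingale property of the stochastic exponential under the linear-growth hypothesis on $u$, which you correctly single out as the only nontrivial obstacle; the paper invokes Bene\v{s}' theorem via \cite[Proposition 2.2.1]{ustunel2013transformation}, and your localisation sketch is the standard alternative route to the same conclusion. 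Your approach is more self-contained in that it re-derives the density identity rather than citing it, whereas the paper's factorisation through $\mathbb{P}_{\mathrm{W}}$ is modular and keeps the bookkeeping symmetric between $\mathbb{P}$ and $\mathbb{P}^u$; the substance is the same.
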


\begin{proof}
The fact that the two measures are equivalent follows from the linear growth assumption on $u$ (see \eqref{eq:control set}), combining Bene\v{s}' theorem with Girsanov's theorem, see \cite[Proposition 2.2.1 and Theorem 2.1.1]{ustunel2013transformation}. According to a slight generalisation of  \cite[Theorem 2.4.2]{ustunel2013transformation}, we have 
\begin{equation}
\label{eq:P PW}
\frac{\mathrm{d} \mathbb{P}}{\mathrm{d}\mathbb{P}_{\mathrm{W}}} (X) = \exp \left( \int_0^T (b(X_s, s) \cdot \sigma^{-2}(X_s, s) \, \mathrm{d}X_s - \frac{1}{2} \int_0^T (b \cdot \sigma^{-2} b)(X_s, s) \, \mathrm{d}s \right),
\end{equation}
and 
\begin{equation}
\label{eq:Pu PW}
\frac{\mathrm{d} \mathbb{P}^u}{\mathrm{d}\mathbb{P}_{\mathrm{W}}} (X) = \exp \left( \int_0^T (b + \sigma u)(X_s, s) \cdot \sigma^{-2}(X_s, s) \, \mathrm{d}X_s - \frac{1}{2} \int_0^T \left((b + \sigma u) \cdot \sigma^{-2} (b + \sigma u)\right)(X_s, s) \, \mathrm{d}s \right),
\end{equation}
where $ \mathbb{P}_{\mathrm{W}}$ denotes the measure on $\mathcal{C}$ induced by
\begin{equation}
\mathrm d X_s = \sigma(X_s,s) \, \mathrm dW_s, \qquad X_0 = x_{\mathrm{init}}.
\end{equation}
Using
\begin{equation}
\frac{\mathrm d \P^{u}}{\mathrm d\P}(X) = \frac{\mathrm d \P^{u}}{\mathrm d\P_{\mathrm{W}}} \frac{\mathrm d \P_{\mathrm{W}}}{\mathrm d\P}(X),
\end{equation}
and inserting \eqref{eq:P PW} and \eqref{eq:Pu PW}, we obtain the desired result.
\end{proof}
    
\begin{proof}[Proof of Proposition \ref{prop:relative entropy}]
Using \eqref{eq:reweighted measure} and \eqref{eq:Pu P} (or arguing as in the proof of Theorem \ref{thm:connections}) we compute
\begin{subequations}
\begin{align}
\mathcal{L}_{\text{RE}}(u) &= {\E}_{\P^u}\left[\log \frac{\mathrm d \P^u}{\mathrm d \Q} \right] 
= {\E}_{\P^u}\left[\log \left(\frac{\mathrm d \P^u}{\mathrm d \P}\frac{\mathrm d \P}{\mathrm d \Q}\right) \right] \\
&= {\E}\left[ \int_0^T u(X_s^u, s) \cdot  \,\mathrm dW_s + \frac{1}{2} \int_0^T |u(X^u_s, s)|^2 \, \mathrm ds + \int_0^T f(X^u_s, s)\mathrm ds  + g(X^u_T)  \right] + \log \mathcal{Z} \\
&= {\E}\left[ \frac{1}{2} \int_0^T |u(X^u_s, s)|^2 \, \mathrm ds + \int_0^T f(X^u_s, s)\mathrm ds  + g(X^u_T)  \right] + \log \mathcal{Z}.
\end{align}
\end{subequations}
\end{proof}
    
\begin{proof}[Proof of Proposition \ref{prop:cross entropy}]
Similarly, we compute
\begin{subequations}
\begin{align}
\mathcal{L}_{\text{CE}}(u) &= {\E}_{\Q}\left[\log \frac{\mathrm d \Q}{\mathrm d \P^u} \right] 
= {\E}_{\P^v}\left[\log\left( \frac{\mathrm d \Q}{\mathrm d \P}\frac{\mathrm d \P}{\mathrm d \P^u}\right) \frac{\mathrm d \Q}{\mathrm d \P} \frac{\mathrm d \P}{\mathrm d \P^v} \right] \\
\begin{split}
&= {\E}\Bigg[\left(\frac{1}{2} \int_0^T |u(X^v_s, s)|^2 \, \mathrm ds - \int_0^T (u \cdot v)(X^v, s)\, \mathrm ds - \int_0^T u(X_s^v, s) \cdot  \,\mathrm dW_s  -\mathcal{W}(X^v) -\log\mathcal{Z} \right) \\
& \qquad\qquad \frac{1}{\mathcal{Z}} \exp\left(-\mathcal{W}(X^v) -\int_0^T v(X_s^v, s) \cdot  \,\mathrm dW_s  -\frac{1}{2} \int_0^T |v(X^v_s, s)|^2 \, \mathrm ds \right)\Bigg] \end{split} \\
\begin{split}
&= \frac{1}{\mathcal{Z}} {\E}\Bigg[\left(\frac{1}{2} \int_0^T |u(X^v_s, s)|^2 \, \mathrm ds - \int_0^T (u \cdot v)(X_s^v, s)\mathrm ds - \int_0^T u(X_s^v, s) \cdot  \,\mathrm dW_s \right) \\
& \qquad\qquad  \exp\left(-\int_0^T v(X_s^v, s) \cdot  \,\mathrm dW_s  -\frac{1}{2} \int_0^T |v(X^v_s, s)|^2 \, \mathrm ds-\mathcal{W}(X^v)  \right)\Bigg] + C\end{split} ,
\end{align}
\end{subequations}
where $C \in \R$ does not depend on $u$.
\end{proof}
    
\begin{proof}[Proof of Proposition \ref{prop:variance}]
With $\widetilde{Y}_T^{u,v}$ defined as in \eqref{def_Y_T}, we compute for the variance loss
\begin{align}
\mathcal{L}_{\text{Var}_v}(u) &= {\Var}_{\P^v}\left( \frac{\mathrm d \Q}{\mathrm d \P^u} \right) 
= {\Var}_{\P^v}\left( \frac{\mathrm d \Q}{\mathrm d \P}\frac{\mathrm d \P}{\mathrm d \P^u} \right) 
= \frac{1}{\mathcal{Z}^2} \,{\Var}_{\P^v}\left( e^{\widetilde{Y}_T^{u, v} - g(X_T^v)} \right).
\end{align}
Similarly, the log-variance loss equals
\begin{subequations}
\begin{align}
\mathcal{L}^{\log}_{\text{Var}_v}(u) &= {\Var}_{\P^v}\left( \log\frac{\mathrm d \Q}{ \mathrm d \P^u} \right) 
= {\Var}_{\P^v}\left( \log\left(\frac{\mathrm d \P}{\mathrm d \P^u}\frac{\mathrm d \Q}{\mathrm d \P} \right) \right) 
= {\Var}_{\P^v}\left( \widetilde{Y}_T^{u, v} - g(X_T^v) - \log\mathcal{Z} \right)\\
&= {\Var}_{\P^v}\left(\widetilde{Y}_T^{u, v} - g(X_T^v) \right).
\end{align}
\end{subequations}
\end{proof}

\subsection{Proofs for Section \ref{sec: infinite batch size}}
\label{app:infinite batch size}

\begin{proof}[Proof of Proposition \ref{prop:log var and KL}]
For $\varepsilon \in \mathbb{R}$ and $\phi \in C_b^1(\mathbb{R}^d \times [0,T] ; \mathbb{R}^d)$, let us define the change of measure
\begin{equation}
\Lambda_T(\varepsilon,\phi) = \exp \left( -\varepsilon \int_0^T \phi(X^u_s,s) \cdot \mathrm{d}W_s - \frac{\varepsilon^2}{2} \int_0^T \vert \phi(X^u_s,s)\vert^2 \, \mathrm{d}s\right), \qquad \frac{\mathrm{d}\widetilde{\Theta}}{\mathrm{d}\Theta} = \Lambda_T(\varepsilon,\phi).
\end{equation}
According to Girsanov's theorem, the process $(\widetilde{W}_s)_{0 \le s \le T}$, defined as
\begin{equation}
\widetilde{W}_t = W_t + \varepsilon \int_0^t \phi(X^u_s,s) \, \mathrm{d}s,
\end{equation}
is a Brownian motion under $\widetilde{\Theta}$. We therefore obtain
\begin{equation}\mathcal{L}_{\mathrm{RE}}(u + \varepsilon \phi) = \mathbb{E} \left[ \left( \frac{1}{2} \int_0^T \vert (u + \varepsilon \phi) (X_s^u,s) \vert^2 \, \mathrm{d}s + \int_0^T f(X_s^u, s)\, \mathrm ds + g(X_T^u)\right) \Lambda^{-1}_T(\varepsilon,\phi) \right] + \log \mathcal{Z}.
\end{equation}
Using dominated convergence, we can interchange derivatives and integrals (for technical details, we refer to \cite{lie2016convexity}) and compute
\begin{subequations}
\label{eq:variation RE}
\nonumber
\begin{align}
\frac{\mathrm{d}}{\mathrm{d} \varepsilon} \Big\vert_{\varepsilon = 0} \mathcal{L}_{\RE} (u + \varepsilon \phi) & = \mathbb{E} \left[\int_0^T (u\cdot \phi)(X_s^u,s) \, \mathrm{d}s + \left(\frac{1}{2} \int_0^T \vert u (X_s^u,s) \vert^2 \, \mathrm{d}s + \int_0^T f(X_s^u, s)\, \mathrm ds + g(X_T^u)\right) \int_0^T \phi(X_s^u,s) \cdot \mathrm{d}W_s \right] 
\\
\label{eqn: derivative RE loss}
& = {\E}\left[\left(g(X_T^u) - \widetilde{Y}_T^{u, u}\right) \int_0^T \phi(X_s^u,s)\cdot \mathrm dW_s \right],
\tag{\ref{eq:variation RE}}
\end{align}
\end{subequations}
where we have used It{\^o}'s isometry,
\begin{equation}
\mathbb{E} \left[ \int_0^T \phi(X_s^u,s)\cdot \mathrm dW_s \int_0^T u(X_s^u,s)\cdot \mathrm dW_s\right] = \mathbb{E} \left[\int_0^T (u\cdot \phi)(X_s^u,s) \, \mathrm{d}s \right].
\end{equation}
Turning to the log-variance loss,
we see that
\begin{subequations}
\label{eq:calculation variance loss}
\begin{align}
& \frac{\mathrm{d}}{\mathrm{d}\varepsilon} \Big\vert_{\varepsilon = 0}
\mathcal{L}^{\log}_{\Var_v}(u + \varepsilon \phi) = \frac{\mathrm{d}}{\mathrm{d}\varepsilon} \Big\vert_{\varepsilon = 0} \left( \mathbb{E} \left[ \left( \widetilde{Y}_T^{u+\varepsilon \phi, v} - g(X_T^v) \right)^2\right] -  \mathbb{E} \left[ \left( \widetilde{Y}_T^{u+\varepsilon \phi, v} - g(X_T^v) \right)\right]^2 \right)
\\
\label{eqn: derivative_log_variance}
= & 2\, \mathbb{E} \left[ \left( \widetilde{Y}_T^{u, v} - g(X_T^v) \right)\frac{\mathrm{d}}{\mathrm{d}\varepsilon}\Big\vert_{\varepsilon = 0}\widetilde{Y}_T^{u+\varepsilon \phi, v}\right] - 2\, \mathbb{E} \left[ \left( \widetilde{Y}_T^{u, v} - g(X_T^v) \right)\right] \mathbb{E}\left[\frac{\mathrm{d}}{\mathrm{d}\varepsilon}\Big\vert_{\varepsilon = 0}\widetilde{Y}_T^{u+\varepsilon \phi, v}\right], 
\end{align}
\end{subequations}
where
\begin{equation}
\frac{\mathrm{d}}{\mathrm{d}\varepsilon}\Big\vert_{\varepsilon = 0}\widetilde{Y}_T^{u+\varepsilon \phi, v} = \int_0^T (\phi\cdot (u-v))(X_s^v,s) \, \mathrm{d}s - \int_0^T \phi(X_s^v,s) \cdot \mathrm{d}W_s.
\end{equation}
Setting $v=u$, we obtain
\begin{equation}
\label{gradient_of_log_variance}
\left( \frac{\mathrm{d}}{\mathrm{d}\varepsilon} \Big\vert_{\varepsilon = 0}
\mathcal{L}^{\log}_{\Var_v}(u + \varepsilon \phi)\right)\Big\vert_{v=u} =2 \,{\E}\left[\left(g(X_T^u) - \widetilde{Y}_T^{u, u}\right) \int_0^T \phi(X_s^u,s)\cdot \mathrm dW_s \right],
\end{equation}
from which the result follows by comparison with \eqref{eq:variation RE}.
\end{proof}

\begin{proof}[Proof of Proposition \ref{prop: equivalence moment log-variance}]
We compute
\begin{equation}
\frac{\mathrm{d}}{\mathrm{d} \varepsilon} \Big\vert_{\varepsilon = 0} \mathcal{L}_{\text{moment}_v} (u + \varepsilon \phi) = 2 \, \mathbb{E} \left[\left( \widetilde{Y}_T^{u,v} + y_0 - g(X_T^v) \right) \left(\int_0^T (\phi\cdot (u-v))(X_s^v,s) \, \mathrm{d}s - \int_0^T \phi(X_s^v,s) \cdot \mathrm{d}W_s \right)  \right].
\end{equation}
Setting $v = u$ and using that ${\E}\left[y_0 \int_0^T \phi(X_s^v,s)\cdot \mathrm{d}W_s \right] = 0$, the first statement follows by comparison with \eqref{eq:exact gradient}.
The second statement follows from 
\begin{align}
\left(\frac{\delta }{\delta u}\mathcal{L}_{\mathrm{moment}_v}(u, y_0;\phi)\right)\Big|_{u=u^*} 
= 2 \, \mathbb{E} \left[\left(  y_0 + \log \mathcal{Z}  \right) \left(\int_0^T (\phi\cdot (u^*-v))(X_s^v,s) \, \mathrm{d}s  \right)  \right],
\end{align}
where we have used the fact that $\widetilde{Y}_T^{u^*,v} - g(X_T^v) = \log \mathcal{Z}$, almost surely.
\end{proof}

\subsection{Proofs for Section \ref{sec:finite sample properties}}
\label{app:products}

\begin{proof}[Proof of Proposition
\ref{prop:robustness at u}]
\ref{it:u log variance}.)
We compute
\begin{subequations}
\begin{align}
& \frac{\delta}{\delta u}\Big|_{u = u^*} \widehat{\mathcal{L}}^{(N)}_{\Var_v}(u;\phi) = 2 \, \Bigg(\frac{1}{N}\sum_{i=1}^N \left[ \exp\left(2\left(\widetilde{Y}_T^{u^*,v,(i)} - g\left(X_T^{v, (i)}\right)\right)\right)\frac{ \delta \widetilde{Y}_T^{u,v,(i)}}{\delta u} (u^*;\phi)\right] \\
& -  \frac{1}{N}\sum_{i=1}^N\left[\exp\left(\widetilde{Y}_T^{u^*,v,(i)} - g\left(X_T^{v, (i)}\right)\right) \frac{\delta\widetilde{Y}_T^{u,v,(i)}}{\delta u} (u^*;\phi) \right]\frac{1}{N}\sum_{i=1}^N\left[\exp\left(\widetilde{Y}_T^{u^*,v,(i)} - g\left(X_T^{v, (i)}\right)\right)\right]\Bigg),
\end{align}
\end{subequations}
where $\frac{ \delta \widetilde{Y}_T^{u,v,(i)}}{\delta u} (u;\phi)$ is given in \eqref{eq:dYdu}.
As in the proof for the log-variance estimator, the quantity
\begin{equation}
\exp\left(\widetilde{Y}_T^{u^*,v,(i)} - g\left(X_T^{v, (i)}\right)\right)
\end{equation}
is almost surely constant and thus the statement follows.\par\bigskip

\ref{it:u moment}.)
Similarly to the computations involved in \ref{it:u log variance}.) we have
\begin{subequations}
\begin{align}
& \frac{\delta}{\delta u}\Big|_{u=u^*}\widehat{\mathcal{L}}^{(N)}_{\operatorname{moment}_v}(u, y_0; \phi) =  \frac{2}{N}\sum_{i=1}^N \left(\widetilde{Y}_T^{u^*,v, (i)} + y_0 - g\left(X_T^{u^*, (i)}\right) \right)\frac{ \delta \widetilde{Y}_T^{u,v,(i)}}{\delta u} (u^*;\phi)  \\
&= \frac{2}{N} \left(- \log\mathcal{Z} + y_0  \right)  \sum_{i=1}^N \left(  \int_0^T \phi(X_s^{v,(i)},s) \cdot \mathrm{d}W^{(i)}_s - \int_0^T \left(\phi \cdot (u^* - v) \right)(X_s^{v, (i)}, s) \, \mathrm ds  \right),
\end{align}
\end{subequations}
where we have used the fact that $\widetilde{Y}_T^{u^*,v, (i)} - g\left(X_T^{u^*, (i)}\right) = - \log \mathcal{Z}$ according to \eqref{eq:equivalent quantities} and \eqref{eq:general FBSDE backward}. 
The variance of this expression equals
\begin{equation}
\label{eqn: variance gradient moment}     
\frac{4}{N} \left(\log\mathcal{Z} -  y_0 \right)^2 \E \left[  \left(  \int_0^T \phi(X_s^{v,(i)},s) \cdot \mathrm{d}W^{(i)}_s - \int_0^T \left(\phi \cdot (u^* - v) \right)(X_s^{v, (i)}, s) \, \mathrm ds  \right)^2 \right],
\end{equation}
implying the claim. \par\bigskip

\ref{it:u relative entropy}.)
Let $\phi \in C_b^1(\mathbb{R}^d \times [0,T] ; \mathbb{R}^d)$ and $\varepsilon \in \mathbb{R}$. As usual, we denote by $(X_s^{u^* + \varepsilon \phi})_{0 \le s \le T}$ the unique strong solution to \eqref{eq:controlled SDE}, with $u$ replaced by $u^* + \varepsilon \phi$. By a slight modification of \cite[Theorems 3.1 and 3.3]{kunita1984stochastic} detailed, for instance, in \cite[Section 10.2.2]{pages2018numerical}, $X_s^{u^* + \varepsilon \phi}$ is almost surely differentiable as a function of $\varepsilon$. Furthermore,  $\frac{\mathrm{d}X_s^{u^* + \varepsilon \phi}}{ \mathrm{d}\varepsilon} \Big \vert_{\varepsilon = 0} =: A_s$ satisfies the SDE \eqref{eq:A equation}.
We calculate
\begin{subequations}
\label{eq:pathwise derivative}
\begin{align}
& \frac{\mathrm{d}}{\mathrm{d}\varepsilon} \Big\vert_{\varepsilon = 0} \left[\frac{1}{2} \int_0^T \vert u^* + \varepsilon \phi \vert^2(X_s^{u^* + \varepsilon \phi},s) \, \mathrm{d}s + \int_0^T f(X_s^{u^* + \varepsilon \phi},s) \, \mathrm{d}s +  g(X_T^{u^* + \varepsilon \phi})\right] \\
\label{eq:path derivative}
& = \int_0^T (u^* \cdot \phi)(X_s^{u^*},s) \, \mathrm{d}s +\frac{1}{2} \int_0^T (\nabla \vert u^* \vert^2)(X_s^{u^*},s) \cdot  A_s \, \mathrm{d}s + \int_0^T \nabla f (X_s^{u^*},s)\cdot A_s \, \mathrm{d}s + \nabla g(X_T^{u^*}) \cdot A_T. 
\end{align}
\end{subequations}
From \eqref{eq:HJB final condition} and using integration by parts, we see that the last term in \eqref{eq:path derivative} satisfies
\begin{equation}
\label{eq:int by parts}
(\nabla g)(X_T^{u^*}) \cdot A_T  = \nabla V(X_T^{u^*},T) \cdot A_T =  \int_0^T \nabla V(X^{u^*}_s,s) \cdot \mathrm{d}A_s +   \int_0^T A_s \cdot \mathrm{d} (\nabla V (X^{u^*}_s,s))
+ \left\langle A_{\cdot},\nabla V(X_{\cdot}^{u^*},\cdot)\right\rangle_T.
\end{equation}
Next, we employ It{\^o}'s formula and Einstein's summation convention to compute
\begin{subequations}
\label{eq:Ito formula V}
\begin{align}
&\mathrm{d} (\partial_{x_i} V (X^{u^*}_s,s))=
\\
& =  \left[\partial_{x_i} \partial_s V + (\partial_{x_i} \partial_{x_j} V) (b+\sigma u^*)_j + \frac{1}{2} (\partial_{x_i}\partial_{x_j} \partial_{x_k} V) \sigma_{jl}\sigma_{kl}  \right](X_s^{u^*},s) \, \mathrm{d}s 
+ \left[(\partial_{x_i} \partial_{x_j}V)\sigma_{jk}\right](X_s^{u^*},s) \, \mathrm{d}W_s^k
\\
& = \partial_{x_i} \left[\partial_s V + LV - \frac{1}{2} (\partial_{x_j}V) \sigma_{jk} \sigma_{lk} (\partial_{x_l}V) \right](X_s^{u^*},s) \,\mathrm{d}s 
+  \left[(\partial_{x_i} \partial_{x_j}V)\sigma_{jk}\right](X_s^{u^*},s) \, \mathrm{d}W_s^k
\\
& + \left[\frac{1}{2} \left((\partial_{x_j}V)(\partial_{x_l}V)  -\partial_{x_j} \partial_{x_l}V\right) \partial_{x_i}(\sigma_{jk} \sigma_{lk}) - (\partial_{x_j}V) \partial_{x_i}b_j \right] (X_s^{u^*},s)\, \mathrm{d}s
\\
& = \left[\frac{1}{2} \left((\partial_{x_j}V)(\partial_{x_l}V)  -\partial_{x_j} \partial_{x_l}V\right) \partial_{x_i}(\sigma_{jk} \sigma_{lk}) - (\partial_{x_j}V) \partial_{x_i}b_j - \partial_{x_i}f\right] (X_s^{u^*},s)\, \mathrm{d}s
\\
&+  \left[(\partial_{x_i} \partial_{x_j}V)\sigma_{jk}\right](X_s^{u^*},s) \, \mathrm{d}W_s^k,
\end{align}
\end{subequations}
where we used \eqref{eq:u_star} from the second to the third line and \eqref{eq:HJB} to manipulate the first term in the third line.
Using \eqref{eq:A equation} and \eqref{eq:Ito formula V}, we see that the quadratic variation process satisfies
\begin{equation}
\label{eq:quadratic variation}
\left\langle A_{\cdot},\nabla V(X_{\cdot}^{u^*},\cdot)\right\rangle_T = \frac{1}{2} \int_0^T A_j \left[ \partial_{x_j} (\sigma_{ik}\sigma_{lk})(\partial_{x_i} \partial_{x_l}V)\right](X_s^{u^*},s)\, \mathrm{d}s.
\end{equation}
Combining \eqref{eq:A equation}, \eqref{eq:int by parts}, \eqref{eq:Ito formula V} and \eqref{eq:quadratic variation}, it follows that \eqref{eq:pathwise derivative} equals
\begin{equation}
\int_0^T \left[A_j (\partial_{x_i}V) \partial_{x_j} \sigma_{ik} + A_j (\partial_{x_i}\partial_{x_j}V)  \sigma_{ik}\right](X_s^{u^*},s) \, \mathrm{d}W^k_s = -\int_0^T A_s \cdot (\nabla u^*)(X_s^{u^*},s) \, \mathrm{d}W_s.
\end{equation}
The claim is now implied by It{\^o}'s isometry. \par\bigskip

\ref{it:u CE}.)
With the definition of the cross-entropy loss estimator as in \eqref{eq:CE estimator} we compute
\begin{subequations}
\label{eqn: grad_CE_v}
\begin{align}
\frac{\delta}{\delta u}\Big|_{u=u^*}\widehat{\mathcal{L}}_{\CE, v}(u; \phi) & = \frac{1}{N} \sum_{i=1}^N \Bigg[  \left( \int_0^T (\phi\cdot (u^*-v))(X_s^{v, (i)},s)\,\mathrm{d}s  - \int_0^T \phi(X_s^{v, (i)}, s) \cdot \mathrm{d}W_s^{(i)}
\right) \\
& \exp \left(- \int_0^T v(X_s^{v, (i)}, s) \cdot \mathrm{d}W_s^{(i)} - \frac{1}{2} \int_0^T \vert v(X_s^{v, (i)}, s) \vert^2 \, \mathrm{d}s - \mathcal{W}(X^{v, (i)}) \right) \Bigg].
\end{align}
\end{subequations}
Since $\E\left[\frac{\delta}{\delta u}\Big|_{u=u^*}\widehat{\mathcal{L}}_{\CE, v}(u; \phi) \right]= 0$ by construction, we see that
\begin{subequations}
\begin{align}
\Var\left(\frac{\delta}{\delta u}\Big|_{u=u^*}\widehat{\mathcal{L}}_{\CE, v}(u; \phi) \right)  & = \frac{1}{N} \E \Bigg[ \left( \int_0^T (\phi\cdot (u^*-v))(X_s^{v},s)\,\mathrm{d}s  - \int_0^T \phi(X_s^{v}, s) \cdot \mathrm{d}W_s
\right)^2 \\
& \exp \left(- 2\int_0^T v(X_s^{v}, s) \cdot \mathrm{d}W_s - \int_0^T \vert v(X_s^{v}, s) \vert^2 \, \mathrm{d}s - 2 \mathcal{W}(X^{v}) \right) \Bigg].
\end{align}
\end{subequations}
Let us assume for the sake of contradiction that $
\Var\left(\frac{\delta}{\delta u}\Big|_{u=u^*}\widehat{\mathcal{L}}_{\CE, v}(u; \phi) \right) = 0$, for all $\phi \in C_b^1(\mathbb{R}^d \times [0,T]; \mathbb{R}^d)$. It then follows that
\begin{equation}
\int_0^T (\phi\cdot (u^*-v))(X_s^{v},s)\,\mathrm{d}s = \int_0^T \phi(X_s^{v}, s) \cdot \mathrm{d}W_s,
\end{equation}
which is clearly false, in general.
\end{proof}

\begin{proof}[Proof of Proposition \ref{prop:robustness}]
Throughout the proof, we will use the notation     
\begin{equation}
\label{eq:product measures}
\mathbb{P}^M := \bigotimes_{i=1}^M \mathbb{P}_i, \qquad \mathbb{Q}^M := \bigotimes_{i=1}^M \mathbb{Q}_i, \qquad \widetilde{\mathbb{P}}^M = \bigotimes_{i=1}^M \widetilde{\mathbb{P}}_i
\end{equation}
to denote the product measures on $\bigotimes_{i=1}^M C([0,T],\mathbb{R}^d) \simeq C([0,T],\mathbb{R}^{Md})$ associated to $\mathbb{P}$, $\mathbb{Q}$ and $\widetilde{\mathbb{P}}$, where $\mathbb{P}_i$, $\mathbb{Q}_i$ and $\widetilde{\mathbb{P}}_i$ refer to identical copies. \par\bigskip

\ref{it:log var robust}.)
First note that
\begin{equation}
\label{eq:variance tensor}
D_{\widetilde{\mathbb{P}}^M}^{\Var (\log)}(\mathbb{P}^M\vert \mathbb{Q}^M) = \mathrm{Var}_{\widetilde{\mathbb{P}}^M} \left( \sum_{i=1}^M \log \left( \frac{\mathrm{d} \mathbb{Q}_i}{\mathrm{d} \mathbb{P}_i}\right) \right) = \sum_{i=1}^M \mathrm{Var}_{\widetilde{\mathbb{P}}_i} \left( \log \left( \frac{\mathrm{d} \mathbb{Q}_i}{\mathrm{d} \mathbb{P}_i}\right)\right) = M D_{\widetilde{\mathbb{P}}}^{\Var (\log)}(\mathbb{P}\vert \mathbb{Q}).
\end{equation}
The sample variance satisfies \cite{cho2005variance}
\begin{equation}
\Var\left(\widehat{D}^{\Var (\log),(N)}_{\widetilde{\mathbb{P}}^M}(\mathbb{P}^M\vert \mathbb{Q}^M)\right) = \frac{1}{N} \left( \mu_4 - \frac{N-3}{N-1}D_{\widetilde{\mathbb{P}}^M}^{\Var (\log)}(\mathbb{P}^M\vert \mathbb{Q}^M)^2 \right),
\end{equation}
where
\begin{equation}
\mu_4 = \mathbb{E}_{\widetilde{\mathbb{P}}^M} \left[ \left( \log \left( \frac{\mathrm{d}\mathbb{Q}^M}{\mathrm{d}\mathbb{P}^M}\right) - \mathbb{E}_{\widetilde{\mathbb{P}}^M}  \left[\log \left(\frac{\mathrm{d}{\mathbb{Q}^M}}{\mathrm{d}\mathbb{P}^M} \right)\right]\right)^4  \right].
\end{equation}
We calculate
\begin{subequations}
\begin{align}
\mu_4 & = \mathbb{E}_{\widetilde{\mathbb{P}}^M} \left[ \left( \sum_{i=1}^M  \left( \log \left( \frac{\mathrm{d}\mathbb{Q}_i}{\mathrm{d}\mathbb{P}_i}\right) - \mathbb{E}_{\widetilde{\mathbb{P}}_i}\left[ \log \left( \frac{\mathrm{d}\mathbb{Q}_i}{\mathrm{d}\mathbb{P}_i} \right)\right]  \right)\right)^4\right] \\
& = M  \mathbb{E}_{\mathbb{P}} \left[  \left( \log \left( \frac{\mathrm{d}\mathbb{Q}}{\mathrm{d}\mathbb{P}}\right) - \mathbb{E}_{\mathbb{P}}\left[ \log \left( \frac{\mathrm{d}\mathbb{Q}}{\mathrm{d}\mathbb{P}} \right)\right]  \right)^4\right] + 6 \begin{pmatrix}
M \\ 2 
\end{pmatrix}
\mathbb{E}_{\mathbb{P}}\left[\left( \log \left( \frac{\mathrm{d} \mathbb{Q}}{\mathrm{d}\P}\right) - \mathbb{E}_{\mathbb{P}}\left[ \log \left( \frac{\mathrm{d}\mathbb{Q}}{\mathrm{d}\mathbb{P}} \right)\right] \right)^2  \right]^2,
\end{align}
\end{subequations}
where we have used the fact that, for instance, 
\begin{equation}
\mathbb{E}_{\widetilde{\mathbb{P}}^M} \left[
\left( \log \left( \frac{\mathrm{d}\mathbb{Q}_i}{\mathrm{d}\mathbb{P}_i}\right) - \mathbb{E}_{\widetilde{\mathbb{P}}_i}\left[ \log \left( \frac{\mathrm{d}\mathbb{Q}_i}{\mathrm{d}\mathbb{P}_i} \right)\right]  \right)\left( \log \left( \frac{\mathrm{d}\mathbb{Q}_j}{\mathrm{d}\mathbb{P}_j}\right) - \mathbb{E}_{\widetilde{\mathbb{P}}_j} \left[\log \left( \frac{\mathrm{d}\mathbb{Q}_j}{\mathrm{d}\mathbb{P}_j} \right)\right]  \right)^3
\right] = 0,
\end{equation}
for $i \neq j$. Combining this with \eqref{eq:variance tensor}, it follows that $\mathrm{Var}\widehat{D}^{\Var (\log),(N)}_{\widetilde{\mathbb{P}}^M}(\mathbb{P}^M\vert \mathbb{Q}^M) = \mathcal{O}(M^2)$. The claim is then a consequence of the definition \eqref{eq:relative error}.\par\bigskip

\ref{it:relative entropy robust}.) We compute
\begin{equation}
D^{\RE}(\P^M | \Q^M) = {\E}_{\P^M}\left[\log \frac{\mathrm d \P^M}{\mathrm d \Q^M} \right] = M {\E}_{{\P}}\left[\log \frac{\mathrm d {\P}}{\mathrm d {\Q}} \right] = M D^{\RE}({\P} | {\Q}).
\end{equation}
For $\widetilde{\P} = \P$ we have
\begin{equation}
\mathrm{Var}\left(\widehat{D}^{\RE,(N)}_{\P^M}(\P^M | \Q^M)\right) = \frac{1}{N} {\Var}_{\P^M}\left( \log \frac{\mathrm d \P^M}{\mathrm d \Q^M}\right) =  \frac{1}{N}{\Var}_{\P^M}\left( \sum_{i=1}^d \log \frac{\mathrm d \P_i}{\mathrm d \Q_i}\right) =  \frac{M^2}{N}{\Var}_{\P}\left( \log \frac{\mathrm d \P}{\mathrm d \Q}\right),
\end{equation}
from which the robustness follows immediately. For $\widetilde{\P} \neq \P$, on the other hand,
\begin{equation}
\mathrm{Var}\left(\widehat{D}^{\RE,(N)}_{\widetilde{\P}^M}(\P^M | \Q^M)\right) = \frac{1}{N} {\Var}_{\widetilde{\P}^M}\left( \log \left(\frac{\mathrm d \P^M}{\mathrm d \Q^M}\right)\frac{\mathrm{d} \P^M}{\mathrm{d}\widetilde{\P}^M}\right),
\end{equation}
and the proof of the non-robustness proceeds as in \ref{it:cross entropy robust}.).\par\bigskip

\ref{it:var robust}.)
As in the proof of \ref{it:log var robust}.) we have 
\begin{equation}
\mathrm{Var}\left(\widehat{D}_{\widetilde{\mathbb{P}}^M}^{\Var,(N)}(\mathbb{P}^M\vert \mathbb{Q}^M)\right) = \frac{1}{N} \left( \mu_4 - \frac{N-3}{N-1}D_{\widetilde{\mathbb{P}}^M}^{\Var}(\mathbb{P}^M\vert \mathbb{Q}^M)^2 \right),
\end{equation}
where 
\begin{equation}
\mu_4 = \mathbb{E}_{\widetilde{\mathbb{P}}^M} \left[ \left(  \frac{\mathrm{d}\mathbb{Q}^M}{\mathrm{d}\mathbb{P}^M} - \mathbb{E}_{\widetilde{\mathbb{P}}^M}   \left[\frac{\mathrm{d}\mathbb{Q}^M}{\mathrm{d}\mathbb{P}^M} \right]\right)^4  \right],
\end{equation}
and 
\begin{align}
 D_{\widetilde{\mathbb{P}}^M}^{\Var}(\mathbb{P}^M\vert \mathbb{Q}^M) = \mathrm{Var}_{ \widetilde{\mathbb{P}}^M} \left( \frac{\mathrm d \mathbb{Q}^M }{\mathrm d\mathbb{P}^M } \right) = {\E}_{\widetilde{\P}}\left[\left(\frac{\mathrm d\Q}{\mathrm d\P}\right)^2\right]^M - {\E}_{\widetilde{\P}}\left[\frac{\mathrm d \Q}{\mathrm d \P}\right]^{2M}.
\end{align}
We can write the relative error as
\begin{equation}
\label{eq:rel error var}
r^{(N)} = \sqrt{\frac{1}{N} \left(\frac{\mu_4}{D_{\widetilde{\mathbb{P}}^M}^{\Var}(\mathbb{P}^M\vert \mathbb{Q}^M)^2} - \frac{N-3}{N-1}\right)},
\end{equation}
and estimate
\begin{subequations}
\label{eq:var inequalities}
\begin{align}
& \frac{\mu_4}{D_{\widetilde{\mathbb{P}}^M}^{\Var}(\mathbb{P}^M\vert \mathbb{Q}^M)^2} \ge \frac{\mathbb{E}_{\widetilde{\mathbb{P}}^{M}} \left[ \left(  \frac{\mathrm{d}\mathbb{Q}^M}{\mathrm{d}\mathbb{P}^M} - \mathbb{E}_{\widetilde{\mathbb{P}}^M}   \left[\frac{\mathrm{d}\mathbb{Q}^M}{\mathrm{d}\mathbb{P}^M} \right]\right)^4  \right]}{{\E}_{\widetilde{\P}}\left[\left(\frac{\mathrm d \Q}{\mathrm d \P}\right)^2\right]^{2M}} \ge \frac{\frac{1}{8} \mathbb{E}_{\widetilde{\mathbb{P}}^M} \left[ \left(\frac{\mathrm{d}\mathbb{Q}^M}{\mathrm{d}\mathbb{P}^M}  \right)^4\right] - \mathbb{E}_{\widetilde{\P}^M} \left[\frac{\mathrm{d}\mathbb{Q}^M}{\mathrm{d}\mathbb{P}^M} \right]^4}{{\E}_{\widetilde{\P}}\left[\left(\frac{\mathrm d \Q}{\mathrm d \P}\right)^2\right]^{2M}}
\\
& = \frac{\frac{1}{8} \mathbb{E}_{\widetilde{\mathbb{P}}} \left[ \left(\frac{\mathrm{d}\mathbb{Q}}{\mathrm{d}\mathbb{P}}  \right)^4\right]^M - \mathbb{E}_{\widetilde{\P}} \left[\frac{\mathrm{d}\mathbb{Q}}{\mathrm{d}\mathbb{P}} \right]^{4M}}{{\E}_{\widetilde{\P}}\left[\left(\frac{\mathrm d \Q}{\mathrm d \P}\right)^2\right]^{2M}} = \frac{1}{8} \left( \underbrace{\frac{\mathbb{E}_{\widetilde{\mathbb{P}}} \left[ \left(\frac{\mathrm{d}\mathbb{Q}}{\mathrm{d}\mathbb{P}}  \right)^4\right]}{{\E}_{\widetilde{\P}}\left[\left(\frac{\mathrm d \Q}{\mathrm d \P}\right)^2\right]^2}}_{=:C_1}\right)^M - \left( \underbrace{ \frac{\mathbb{E}_{\widetilde{\mathbb{P}}} \left[ \left(\frac{\mathrm{d}\mathbb{Q}}{\mathrm{d}\mathbb{P}}  \right)\right]^4}{{\E}_{\widetilde{\P}}\left[\left(\frac{\mathrm d \Q}{\mathrm d \P}\right)^2\right]^2}}_{=:C_2}\right)^M,
\end{align}
\end{subequations}
where the second bound is implied by the $c_r$-inequality \cite[Section 9.3]{loeve1963probability}. By Jensen's inequality and since $\frac{\mathrm{d}\Q}{\mathrm{d}\P}$ is not $\widetilde{\P}$-almost surely constant by assumption, it holds that $C_1 > 1$ and $C_2 < 1$. The claim therefore follows from combining \eqref{eq:rel error var} and \eqref{eq:var inequalities}.\par\bigskip

\ref{it:cross entropy robust}.)
Employing the notation introduced in \eqref{eq:product measures}, we see that
\begin{equation}
\label{eq:KL cross entropy}
D^{\CE}(\P^M \vert \Q^M) = \mathbb{E}_{\Q^M} \left[\log \left(\frac{\mathrm{d}\Q^M}{\mathrm{d}\P^M}\right) \right] = \sum_{i=1}^M {\E}_{\Q_i} \left[ \log \left(\frac{\mathrm{d} \Q_i}{\mathrm{d} \P_i}\right) \right] = M  D^{\CE}(\P \vert \Q).
\end{equation}
Furthermore,
\begin{subequations}
\label{eq:variance cross entropy}
\begin{align}
\Var\left(\widehat{D}^{\CE,(N)}_{\widetilde{\P}^M}(\mathbb{P}^M \vert \mathbb{Q}^M )\right) & = \frac{1}{N} \mathrm{Var}_{\widetilde{\mathbb{P}}^M} \left( \log \left( \frac{\mathrm{d} \mathbb{Q}^M}{\mathrm{d}\mathbb{P}^M} \right)\frac{\mathrm{d} \mathbb{Q}^M}{\mathbb{d}\mathrm{d}\widetilde{\mathbb{P}}^M}\right)
\\
& = \frac{1}{N} \left( \mathbb{E}_{\widetilde{\mathbb{P}}^M} \left[ \log^2 \left( \frac{\mathrm{d} \mathbb{Q}^M}{\mathbb{d}\mathrm{d}\mathbb{P}^M} \right) \left( \frac{\mathrm{d} \mathbb{Q}^M}{\mathbb{d}\mathrm{d}\widetilde{\mathbb{P}}^M} \right)^2 \right] - \mathbb{E}_{\widetilde{\mathbb{P}}^M} \left[ \log \left( \frac{\mathrm{d}\mathbb{Q}^M}{\mathbb{d}\mathrm{d}\mathbb{P}^M}\right)\frac{\mathrm{d}\mathbb{Q}^M}{\mathbb{d}\mathrm{d}\widetilde{\mathbb{P}}^M}\right]^2 \right)
\\
& = \frac{1}{N} \left( \mathbb{E}_{\mathbb{Q}^M} \left[ \log^2 \left( \frac{\mathrm{d}\mathbb{Q}^M}{\mathbb{d}\mathrm{d}\mathbb{P}^M} \right) \frac{\mathrm{d}\mathbb{Q}^M}{\mathbb{d}\mathrm{d}\widetilde{\mathbb{P}}^M}  \right] - M^2 \mathbb{E}_{\mathbb{Q}} \left[ \log \left( \frac{\mathrm{d} \mathbb{Q}}{\mathbb{d}\mathrm{d}\mathbb{P}}\right)\right]^2 \right).
\end{align}
\end{subequations}
Manipulating the first term, we obtain
\begin{subequations}
\begin{align}
& \mathbb{E}_{\mathbb{Q}^M} \left[ \log^2 \left( \frac{\mathrm{d}\mathbb{Q}^M}{\mathbb{d}\mathrm{d}\mathbb{P}^M} \right) \frac{\mathrm{d}\mathbb{Q}^M}{\mathbb{d}\mathrm{d}\widetilde{\mathbb{P}}^M}  \right]  = \mathbb{E}_{\mathbb{Q}^M} \left[ \left( \sum_{i=1}^M \log \left( \frac{\mathrm{d} \mathbb{Q}_i}{\mathrm{d}\mathbb{P}_i}\right) \right)^2 \frac{\mathrm{d} \mathbb{Q}^M}{\mathbb{d}\mathrm{d}\widetilde{\mathbb{P}}^M} \right]
\\
& = \sum_{i=1}^M \mathbb{E}_{\mathbb{Q}^M} \left[ \log^2 \left( \frac{\mathrm{d} \mathbb{Q}_i}{\mathrm{d}\mathbb{P}_i}\right) \frac{\mathrm{d} \mathbb{Q}^M}{\mathbb{d}\mathrm{d}\widetilde{\mathbb{P}}^M}  \right] + \sum_{\substack{i,j=1\\ i \neq j }}^M \mathbb{E}_{\mathbb{Q}^M} \left[ \log \left(\frac{\mathrm{d} \mathbb{Q}_i}{\mathrm{d}\mathbb{P}_i} \right) \log \left(\frac{\mathrm{d} \mathbb{Q}_j}{\mathrm{d}\mathbb{P}_j}\right) \frac{\mathrm{d} \mathbb{Q}^M}{\mathbb{d}\mathrm{d}\widetilde{\mathbb{P}}^M}\right]
\\
& = M \left(\mathbb{E}_{\mathbb{Q}} \left[\frac{\mathrm{d}\mathbb{Q}}{\mathrm{d}\widetilde{\mathbb{P}}}\right] \right)^{M-1} \mathbb{E}_{\mathbb{Q}} \left[  \log^2\left( \frac{\mathrm{d}\mathbb{Q}}{\mathrm{d}\mathbb{P}}\right) \frac{\mathrm{d}\mathbb{Q}}{\mathrm{d}\widetilde{\mathbb{P}}}\right] + \frac{M(M-1)}{2} \left( \mathbb{E}_{\mathbb{Q}} \left[\log \left( \frac{\mathrm{d}\mathbb{Q}}{\mathrm{d}\mathbb{P}}\right) \frac{\mathrm{d}\mathbb{Q}}{\mathrm{d}\widetilde{\mathbb{P}}}\right]\right)^2 \left( \mathbb{E}_{\mathbb{Q}}\left[\frac{\mathrm{d}\mathbb{Q}}{\mathrm{d}\widetilde{\mathbb{P}}}\right]\right)^{M-2}.
\end{align}
\end{subequations}
Notice that 
\begin{equation}
{\E}_{\Q} \left[ \frac{\mathrm{d \Q} }{\mathrm{d \widetilde{\P}} }\right] = {\E}_{\widetilde{\P}} \left[ \left(\frac{\mathrm{d \Q} }{\mathrm{d \widetilde{\P}} }\right)^2\right] = \chi^2(\Q \vert \widetilde{\P}) + 1.
\end{equation}
The claim now follows from combining \eqref{eq:KL cross entropy} and \eqref{eq:variance cross entropy} in definition \eqref{eq:relative error}.
\end{proof}
    
\subsection{Optimal control for Ornstein-Uhlenbeck dynamics with linear cost}
\label{OU_linear_solution}
The control problem considered in Section \ref{section_LLQC} can be solved analytically. Using \eqref{eqn: free energy}, we note that the value function solving the HJB-PDE \eqref{eq:HJB} fulfills $V(x, t) = -\log \psi(x, t)$, with
\begin{equation}
\psi(x, t) = \E\left[e^{-\gamma \cdot X_T} | X_t = x\right],
\end{equation}
where $(X_s)_{t \le s \le T}$ solves
\begin{equation}
\mathrm dX_s = AX_s \, \mathrm d s + B \,  \mathrm d W_s, \quad X_t = x.
\end{equation}
The distribution of $X_T$ is known explicitly, namely
\begin{equation}
(X_T|X_t = x) \sim \mathcal{N}\left( \mu_t, \Sigma_t \right)
\end{equation}
with
\begin{equation}
\mu_t = e^{A(T-t)}x, \qquad \Sigma_t = \int_0^{T-t} e^{As} B B^\top e^{A^\top s}\, \mathrm ds.
\end{equation}
We can now compute
\begin{equation}
\psi(x, t) = \exp\left(-\gamma \cdot \left(\mu_t - \frac{1}{2} \Sigma_t\gamma\right)\right),
\end{equation}
and the value function
\begin{equation}
V(x, t) = \gamma \cdot \left(\mu_t - \frac{1}{2} \Sigma_t \gamma\right),
\end{equation}
and therefore with \eqref{eq:connections u star} we obtain
\begin{equation}
u^*(x, t) = -B^\top \nabla V(x, t) = -B^\top e^{A^\top (T-t)}\gamma.
\end{equation}

\bibliographystyle{abbrv}
\bibliography{main}
    
\end{document}